\documentclass[11pt,a4paper,reqno]{amsart}
\usepackage{cite}

\usepackage{amsmath,amssymb,amsfonts,amsthm,epsfig,graphicx}
\usepackage{graphicx,color}
\usepackage{url}

%\usepackage{draftwatermark}
%\SetWatermarkText{draft}
%\SetWatermarkFontSize{2cm}
%\SetWatermarkAngle{0}
%\SetWatermarkColor[gray]{0.95}

\headheight .3cm
\headsep .4cm
\topskip 0cm
\footskip 0.5cm
\oddsidemargin 0.3cm
\evensidemargin 0.3cm
\topmargin .5cm
\textwidth 16.0cm
\textheight 23cm

%%%%DIMENSIONI della pagina
%%%\textheight=7.5in
%%%\textwidth=4.7in

\def\E{\mathcal{E}}

\def\H{\mathcal{H}}

\def\N{\mathbb N}

\def\R{\mathbb R}

\def\C{\mathbf{C}}

\def\HH{\mathbf{H}}

\def\L{\Lambda}
\def\Om{\Omega}
\def\S{\Sigma}

                            %macrolavoro

\def\a{\alpha}

\def\g{\gamma}
\def\de{\delta}
\def\e{\varepsilon}
\def\k{\kappa}
\def\l{\lambda}
\def\s{\sigma}

\def\om{\omega}
\def\vphi{\varphi}

\def\Lip{{\rm Lip}}

\def\Div{{\rm div}\,}
\def\Id{{\rm Id}\,}
\def\dist{{\rm dist}}
\def\loc{{\rm loc}}
\def\diam{{\rm diam}}

\def\spt{{\rm spt}}

\def\weak{\stackrel{*}{\rightharpoonup}}

\def\pa{\partial}

\def\cc{\subset\subset}

\def\HH{\mathbf{H}}

\def\C{\mathbf{C}}
\def\D{\mathbf{D}}
\def\K{\mathbf{K}}

\def\ov{\overline}
\theoremstyle{plain}% default
\newtheorem{theorem}{Theorem}[section]
\newtheorem{lemma}[theorem]{Lemma}
\newtheorem{corollary}[theorem]{Corollary}
\newtheorem{proposition}[theorem]{Proposition}
\newtheorem*{theorem*}{Theorem}
\newtheorem*{corollary*}{Corollary}

\theoremstyle{definition}
\newtheorem{definition}[theorem]{Definition}

\newtheorem{remark}[theorem]{Remark}

\newtheorem*{notation*}{Notation}

\numberwithin{equation}{section}
\numberwithin{figure}{section}

\renewcommand{\le}{\leqslant}

\renewcommand{\ge}{\geqslant}

\setcounter{tocdepth}{1}

\title[Capillarity problems with nonlocal surface tension energies]{Capillarity problems with \\ nonlocal surface tension energies}

\author{Francesco Maggi}
\address[Francesco Maggi]{Abdus Salam International Center for Theoretical Physics, Strada Costiera 11, I-34151, Trieste, Italy. On leave from the
University of Texas at Austin}
\email{fmaggi@ictp.it}

\author{Enrico Valdinoci}
\address[Enrico Valdinoci]{School of Mathematics and Statistics,
University of Melbourne,
813 Swanston Street, Parkville VIC 3010, Australia, and
Weierstra{\ss} Institut f\"ur Angewandte Analysis und Stochastik,
Mohrenstra{\ss}e 39, 10117 Berlin, Germany,
and Dipartimento di Matematica, Universit\`a degli studi di Milano,
Via Saldini 50, 20133 Milan, Italy}
\email{enrico@mat.uniroma3.it}

\begin{document}

\begin{abstract}
{\rm We explore the possibility of modifying the classical Gauss free energy functional used in capillarity theory
by considering surface tension energies of nonlocal type. The corresponding variational principles lead to new equilibrium
conditions which are compared to the mean curvature equation and Young's law found in classical capillarity
theory. As a special case of this family of problems we recover a nonlocal relative isoperimetric problem of geometric interest.}
\end{abstract}

\maketitle

\section{Introduction}
\subsection{Overview}
Classical capillarity theory is based on the study of volume-constrained critical points and local/global minimizers
of the Gauss free energy of a liquid droplet occupying a region $E$ inside a container $\Om\subset\R^n$, $n\ge 2$.
If $\H^{n-1}$ denotes the $(n-1)$-dimensional Hausdorff measure in $\R^n$, then the Gauss free energy of $E$ is
\begin{equation}
  \label{gauss free energy}
  \H^{n-1}(\Om\cap\pa E)+\s\,\H^{n-1}(\pa\Om\cap\pa E)+\int_E g(x)\,dx
\end{equation}
where $\H^{n-1}(\Om\cap\pa E)$ accounts for the surface tension energy of the interior liquid/air interface,
$\s\,\H^{n-1}(\pa\Om\cap\pa E)$ for the surface tension energy due to the liquid/solid interface
(measured relatively to the liquid/air tension, so that the {\it relative adhesion coefficient} $\s$ is assumed
to satisfy $-1<\s<1$), and where $g(x)$ stands for the potential energy density acting on the droplet.
It is well-known that when $E$ is a volume-constrained
critical point of the Gauss free energy having sufficiently smooth boundary, then the equilibrium conditions (Euler-Lagrange equations)
for $E$ take the form
\begin{eqnarray}\label{mean curvature equation}
  \HH_{\pa E}(x)+g(x)=c\,,&&\qquad\mbox{for every $x\in\Om\cap\pa E$}\,,
  \\\label{young law}
  \nu_E(x)\cdot\nu_\Om(x)=\s\,,&&\qquad\mbox{for every $x\in\overline{\Om\cap\pa E}\cap\pa\Om$}\,,
\end{eqnarray}
where $\nu_E$ is the outer unit normal to $E$, $\HH_{\pa E}$ is the mean curvature of $\pa E$ (computed with
respect to $\nu_E$) and $c\in\R$ is a Lagrange multiplier.

In this paper we introduce and investigate a family of capillarity-type energies where the effect of surface tension is measured
through nonlocal interaction energies, rather then through surface area. Given $s\in(0,1)$ and $\e\in(0,\infty]$ we denote by
\[
I_s^\e(E,F)=\int_E\,dx\int_F\frac{1_{(0,\e)}(|x-y|)\,dy}{|x-y|^{n+s}}
\]
the {\it fractional interaction energy of order $s$ truncated at scale $\e$} between two disjoint sets $E$ and $F$ contained in $\R^n$. We then work with the following ``fractional Gauss free energy''
\begin{equation}
  \label{nonlocal free energy}
  I_s^\e(E,\Om\cap E^c)+\s\,I_s^\e(E,\Om^c)+\int_E\,g(x)\,dx\,,
\end{equation}
Points in $E$ interact with points in $\Om\cap E^c$ and with points in $\Om^c$; the second type of interaction is weighted by a constant $\s$ having the same role of the relative adhesion coefficient in the classical model, and interactions are truncated at distance $\e$. Since the kernel $|z|^{-n-s}$ is not locally integrable, the function $x\in E\mapsto\int_{E^c}|x-y|^{-n-s}\,dy$ explodes like $\dist(x,\pa E)^{-s}$
as $x\in E$ approaches the boundary of $E$. Now for every $y\in\pa E$ the function $t>0\mapsto\dist(y-t\nu_E(y),\pa E)^{-s}=t^{-s}$ is integrable as $t\to 0^+$,
and thus we understand a term like the integral over $x\in E$ of $x\in E\mapsto\int_{E^c}|x-y|^{-n-s}\,dy$, or more generally $I_s^\e(E,E^c)$ with $\e<\infty$, as a nonlocal measurement of the surface area of $\pa E$. This intuition is confirmed by the fact that, in the limit $s\to 1^-$ corresponding to highly concentrated kernels, and after scaling by the factor $(1-s)$, the nonlocal capillarity energy \eqref{nonlocal free energy} converges to its local counterpart \eqref{gauss free energy},
\[
\lim_{s\to 1^-}\frac{(1-s)}{\k_n}\,\Big(  I_s^\e(E,\Om\cap E^c)+\s\,I_s^\e(E,\Om^c)\Big)=\H^{n-1}(\Om\cap\pa E)+\s\,\H^{n-1}(\pa\Om\cap\pa E)
\]
see Proposition \ref{prop classical} below. The latter property indicates that for $s$ close to $1$ the nonlocal model is quite close to the classical one. There are however some qualitative differences of possible interest, and the goal of this paper is starting their study.

Clearly, in order to understand these differences, the first step is deriving and discussing the Euler-Lagrange equations for the nonlocal capillarity energy \eqref{nonlocal free energy}. Both the interior equilibrium condition \eqref{mean curvature equation} and
Young's law (the contact angle condition \eqref{young law}) are affected by the non-locality of the model.

A first remarkable difference is that the interior equilibrium condition feels the effect of the
relative adhesion coefficient $\s$ at interior points whose distance from $\pa\Om$ is within the range
of the interaction kernel. (This is in striking difference with the classical model, where the corresponding interior equilibrium condition, namely
\eqref{mean curvature equation}, is completely unaffected by the mismatch in surface tension even at points in the
boundary of the droplet lying at arbitrarily small distance from the container walls.) Indeed, as proved in Theorem \ref{thm EL equation} below, the interior equilibrium condition in the fractional setting takes the form
\begin{equation}
  \label{mean curvature equation nonlocal}
  \HH^{s,\e}_{\pa E}(x)-(1-\s)\,\int_{\Om^c}\frac{1_{(0,\e)}(|x-y|)}{|x-y|^{n+s}}\,dy+g(x)=c\qquad\mbox{for every $x\in\Om\cap\pa E$}\,,
\end{equation}
where $\HH^{s,\e}_{\pa E}(x)$ is the {\it fractional mean curvature of $\pa E$ at $x$} (of fractional order $s$ and with truncation at scale $\e$), defined as
\[
\HH^{s,\e}_{\pa E}(x)={\rm p.v.}\int_{\R^n}\Big(1_{E^c}(y)-1_E(y)\Big)\,\frac{1_{(0,\e)}(|x-y|)}{|x-y|^{n+s}}\,dy\qquad\forall x\in \pa E\,.
\]
This last integral has to be defined in the principal value sense  and only for $x\in\pa E$, because in order for the integral to converge it is essential that, in a ball of radius $r>0$ centered at $x$, $1_{E^c}$  and $-1_E$ cancel out the presence of the non-integrable kernel on outside of a region of volume ${\rm o}(r^n)$. With this caveat in mind, it holds that, as $s\to 1^-$, $(1-s)\,\HH^{s,\e}_{\pa E}(x)\to \HH_{\pa E}(x)$ for every $x\in\pa E$ such that $\pa E$ is an hypersurface of class $C^2$ around $x$. The novel feature of the fractional model is contained in the second term on the left-hand side of \eqref{mean curvature equation nonlocal}, namely
\[
-(1-\s)\,\int_{\Om^c}\frac{1_{(0,\e)}(|x-y|)}{|x-y|^{n+s}}\,dy\,.
\]
Because of this term, the mismatch $1-\s$ in the surface tension between the liquid/air and liquid/solid interface is felt {\it also at point $x\in\Om\cap\pa E$ lying at a distance at most $\e$ from the boundary wall $\pa\Om$}. Notice that this nonlocal term, multiplied by $(1-s)$, converges to $0$ as $s\to 1^-$ for every $x\in\Om$.

Coming to the contact angle condition, as proved in Theorem \ref{thm nonlocal young} below, when working with the fractional model one finds a different contact angle than the one predicted in the classical Young's law \eqref{young law}. Independently from the considered value of $\e$ and on the ambient space dimension $n$, the fractional Young's law takes the form
\begin{equation}
\label{young law nonlocal intro}
  \nu_E(x)\cdot\nu_\Om(x)=\cos(\pi-\theta(s,\s))\,,\qquad\mbox{for every $x\in\overline{\Om\cap\pa E}\cap\pa\Om$}\,,
\end{equation}
where $\theta=\theta(s,\s)\in(0,\pi)$ is uniquely defined in terms of $s$ and $\s$ by the identity
\begin{eqnarray}\label{theta formula}
&&
\int_{\R^n}\frac{(1_{J_\theta^c\cap H}+
\s\,1_{H^c}-1_{J_\theta})(z)}{|e(\theta)-z|^{n+s}}\,dz\,=\,0,
\\ \nonumber
{\mbox{where }}&& J_\theta=\Big\{
x\in \R^n:\mbox{$x_n>0$ and $\cos\a\,x_n=\sin\a\,x_1$
for some $\a\in(0,\theta)$}\Big\},\\\nonumber
&& H=\{x\in\R^n:x_n>0\}
\\\nonumber
{\mbox{and }}
&&e(\theta)=\cos\,\theta\,e_1+\sin\theta\,e_n\,,
\end{eqnarray}
whose geometric significance is illustrated in
\begin{figure}
  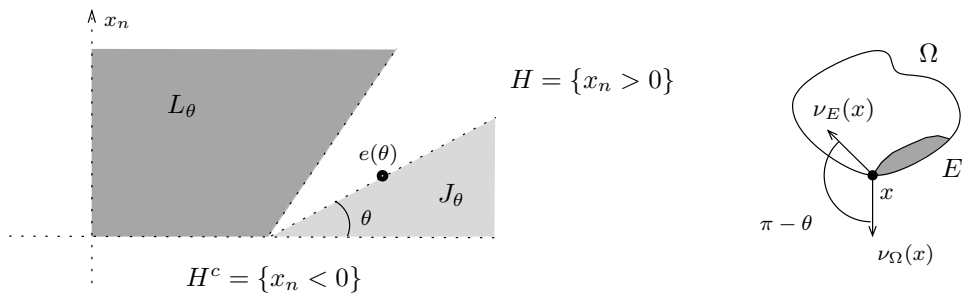\caption{\small{The contact angle for the fractional Young's law of order $s$ is computed by balancing the volume of the cone $L_\theta$ and the volume of $H^c$ multiplied by $\s$. In both cases ``volume'' is computed with respect to the singular density $|z-e(\theta)|^{-n-s}dz$, where both integral converge as the non-integrable singularity $e(\theta)$ is at positive distance from both $L_\theta$ and $H^c$. Notice that $L_\theta$ is defined by considering the reflection $J_\theta^*$ of $J_\theta$ with respect to $H\cap\pa J_\theta$, and then by setting $L_\theta=J_\theta^*\cap H$.}}\label{fig nonlocalyoungzero}
\end{figure}
Figure \ref{fig nonlocalyoungzero}. (Notice that the independence of $\theta$ from $n$ is not apparent from \eqref{theta formula}.) One has $\s\in(-1,1)\mapsto\theta(s,\s)$ is strictly increasing with
\[
\theta(s,0)=\frac\pi2\,,\quad\lim_{\s\to (-1)^+}\theta(s,\s)=0\,,\qquad\lim_{\s\to 1^-}\theta(s,\s)=\pi
\]
and, quite importantly,
\[
\lim_{s\to 1^-}\cos(\pi-\theta(s,\s))=\s\,,
\]
so that the fractional Young's law \eqref{young law nonlocal intro} converges to its local counterpart \eqref{young law} as $s\to 1^-$. The fact of obtaining a different contact angle than the classical one may be reconciliated with physical observation as the angle predicted by the classical Young's law may be actually observed in the nonlocal context {\it at a characteristic distance from the boundary of the container}. In other words, {\it the nonlocal model may predict different microscopic and macroscopic contact angles, the latter in accordance with \eqref{young law}}. We plan to address this issue in a subsequent paper, by focusing on the fractional sessile droplet problem.

Let us now comment on the mathematical background of our work. The use of fractional Sobolev norms in the analysis of partial differential
equations is of course a well established area of research with a vast literature and a huge range of applications.
The study of nonlocal {\it geometric} variational problems has attracted a large
attention since the seminal work \cite{caffaroquesavin}, where {\it nonlocal minimal surfaces} have been introduced motivated by the study
of the mean curvature flow as the limit of a process based on long range correlation. The boundary of a set $E$ is nonlocal area minimizing
in an open set $\Om$ if the quantity
\[
\int_{E\cap\Om}\,\int_{E^c\cap\Om}\frac{dxdy}{|x-y|^{n+s}}+\mbox{additional ``lower order'' interaction terms}
\]
is minimized by $E$ among all sets $F$ such that $F\setminus\Om=E\setminus\Om$. The main result in \cite{caffaroquesavin} is partial $C^{1,\a}$-regularity theorem outside a closed singular set of dimension $n-2$. Higher order regularity and improved dimensional estimates for the singular set have been obtained in \cite{savinvaldinoci,barrerafigallivaldinoci,figallivaldinociLipschitz}, examples of singular minimizing cones have been obtained in \cite{daviladelpinowei,daviladelpinowei2}, while boundaries with constant fractional mean curvature have been studied in \cite{cabrefallweth1,cabrefallweth2,ciraolofigallimagginovaga,daviladelpiniodipierrovaldinoci}. The present paper is also a contribution to the developing theory of nonlocal geometric variational problems. Indeed the minimization of \eqref{nonlocal free energy} in the case $\s=0$, $g=0$, and $\e=+\infty$ leads to study a family of {\it relative isoperimetric problems for fractional perimeters} in the open set $\Om$. Relative isoperimetric problems are of course a classical subject in the calculus of variations, especially because of their importance in determining (or in bounding) sharp constants in Poincar\'e-type inequalities; see \cite{mazyaBOOK}. This kind of application uses the possibility of writing Dirichlet energies as perimeter integrals over super-level sets by the coarea formula. This is possible also in the nonlocal case, where an appropriate version of the coarea formula can be found, for example, in \cite{visintin}.

\subsection{Interaction kernels} The study of nonlocal geometric variational problems is mainly concerned with the nonlocal perimeters
defined through the infinite-range isotropic singular kernels or, briefly, fractional kernels. Given $s\in(0,1)$, the {\it fractional kernel} of order $s$ is defined as
\begin{equation}
  \label{fractional kernel}
  K_s(\zeta)=\frac1{|\zeta|^{n+s}}\,,\qquad \zeta\in\R^n\setminus\{0\}\,.
\end{equation}
It also seems interesting to consider finite range interactions. We thus introduce the
{\it truncated fractional kernel} of order $s$,
\begin{equation}
  \label{truncated fractional kernel}
  K_s^\e(\zeta)=\frac{1_{(0,\e)}(|\zeta|)}{|\zeta|^{n+s}}\,,\qquad \zeta\in\R^n\setminus\{0\}\,,\e\in(0,\infty]\,.
\end{equation}
Given $K_s$ and $K_s^\e$ as the prototype kernels in our theory, we may finally want to consider possibly anisotropic interactions. We are thus led to introduce the following family of kernels.

Given~$n\ge 2$,~$s\in(0,1)$, $\l\ge1$ and $\e\in[0,\infty]$ we consider the family of {\it interaction kernels}
\[
\K(n,s,\l,\e)\qquad\mbox{(and set $\K(n,s,\l)=\K(n,s,\l,0)$)}
\]
consisting of those {\it even} functions~$K:\R^n\setminus\{0\}\to[0,+\infty)$ satisfying
\begin{equation}
  \label{kernelclass 0}
  \frac{1_{B_\e}(\zeta)}{\l\,|\zeta|^{n+s}}\le K(\zeta)\le\frac{\l}{|\zeta|^{n+s}}
  \qquad\forall\zeta\in\R^n\setminus\{0\}\,.
\end{equation}
(Here, $B_\e(x)$ is the ball
 of center $x$ and radius $\e$,
and we simply set $B_\e=B_\e(0)$.)
In particular, we assume that~$K$ is bounded from above by a homogeneous
kernel with polynomial decay of degree~$-(n+s)$
and that is bounded from below
by the same type of homogeneous
kernels up to distance $\e$ from the origin. Notice that $\K(n,s,1,\infty)$ contains only the
fractional kernel $K_s$ defined in \eqref{fractional kernel}. Given any $K\in\K(n,s,\l,\e)$ we set
\begin{eqnarray}\label{homogeneous kernel}
K^*(\zeta)=\lim_{r\to 0^+}r^{n+s}\,K(r\,\zeta)\qquad\zeta\ne 0\,,
\end{eqnarray}
provided the limit exists. Notice that $K^*$ is automatically $-(n+s)$-homogeneous and bounded from above by $\l\,|\zeta|^{-n-s}$, and that in the case
of truncated fractional kernels we have
\[
(K^\e_s)^*=K_s\qquad\forall s\in(0,1)\,,\,\e>0\,.
\]
Occasionally we shall need to work with smoother interaction kernels: given $h\in\N$ we thus introduce the class
\[
\K^h(n,s,\l,\e)\qquad\mbox{(and set $\K^h(n,s,\l)=\K^h(n,s,\l,0)$)}
\]
consisting of those $K\in\K(n,s,\l,\e)\cap C^h(\R^n\setminus\{0\})$, with
\begin{equation}\label{D2}
|D^j K(\zeta)|\le \frac{\l}{|\zeta|^{n+s+j}}\qquad \forall \zeta\in\R^n\setminus\{0\}\,,1\le j\le h\,.
\end{equation}
Each kernel $K$ defines an {\it interaction functional} between disjoint subsets of $\R^n$,
\[
I(E,F)=\int_E\int_FK(x-y)\,dx\,dy\in[0,\infty]\,,\qquad E,F\subset\R^n\,,E\cap F=\varnothing\,.
\]
The {\it nonlocal perimeter associated to $K$} is defined as the interaction of a set with its complement
\[
P(E)=I(E,E^c)\,,\qquad E^c=\R^n\setminus E\,.
\]
In the important cases of the  fractional kernel $K=K_s$ and of truncated fractional kernel $K=K_s^\e$ we write $I_s$ and $I_s^\e$ in place of $I$, and $P_s$ and $P_s^\e$ in place of $P$, so that
\[
I_s(E,F)=\int_E\int_F\frac{dx\,dy}{|x-y|^{n+s}}\,,\qquad P_s(E)= I_s(E,E^c)\,,
\]
\[
I_s^\e(E,F)=\int_E\int_F\frac{1_{B_\e}(x-y)\,dx\,dy}{|x-y|^{n+s}}\,,\qquad P_s^\e(E)= I_s^\e(E,E^c)\,.
\]
As shown in \cite{davilaBV} (see also \cite{Bourgain01anotherlook})
\[
\lim_{s\to 1^-}(1-s)\,P_s^\e(E)=\k_n\,\H^{n-1}(\pa^*E)\qquad   \k_n:=\frac12\int_{S^n}|e\cdot\om|d\H^{n-1}_\om\qquad e\in S^{n-1}\,,
\]
whenever $E$ is a set of finite perimeter in $\R^n$ and $\pa^*E$ denotes the reduced boundary of $E$ (for example, if $E$ is a bounded open set with Lipschitz boundary, then $E$ is a set of finite perimeter and $\pa^*E=\pa E$).

\subsection{Nonlocal capillarity energy} Given $K\in\K(n,s,\l,\e)$, an open set $\Om\subset\R^n$, and $\s\in(-1,1)$ we define the nonlocal capillarity energy of $E\subset\Om$ as
\begin{equation}\label{1.2bis}
\E(E)=I(E,E^c\Om)+\s\,I(E,\Om^c)\,.
\end{equation}
Here and in the following we adopt the following unusual convention
in order to simplify formulas involving  the interaction functional: precisely, when a set
intersection $F\cap G$ will appear as an argument of $I$, we shall write $FG$ in place
of $F\cap G$. For example,
\begin{equation}
  \label{convention}
  \mbox{$I(EF,GH)$ stands for $I(E \cap F,G\cap H)$}\,.
\end{equation}
Looking at \eqref{1.2bis}, the term $I(E,E^c\Om)$ accounts for interactions between liquid and air particles, while the term $I(E,\Om^c)$
 accounts for interactions between $E$ and the solid walls of the container. From the physical
 point of view, we expect short range interactions to matter the most. When working with the fractional kernels $I_s^\e$, this can be taken into account either by requiring the truncation parameter $\e$ to be small, or by taking $s$ close to $1$. As already noticed, the latter option corresponds to highly concentrated kernels whose fractional perimeter are increasingly close to the classical perimeter.

The basic variational problem we are interested in is then
\begin{equation}
  \label{variational problem}
  \g=\inf\Big\{\E(E)+\int_E\,g(x)\,dx:\;\, E\subset\Om\,,\;\,|E|=m\Big\}
\end{equation}
where $m\in(0,|\Om|)$ and $g:\R^n\to\R$ are given. As already noticed, when~$\sigma=0$ and~$g=0$,
\eqref{variational problem} is a nonlocal relative isoperimetric problem of geometric and functional interest.
% As noticed by Visentin,
%given a function $u:\Om\to\R$, the integration of $I(\{u>t\},\{u\le t\})$ along $t\in\R$
%gives back a nonlocal Sobolev norm of $u$ through a coarea-type formula.
%This connection can be exploited to use the nonlocal relative isoperimetric problem in the study
%of nonlocal Poincar\'e inequalities.
The minimization problem in~\eqref{variational problem} is indeed well-posed, according to the following simple
result:

\begin{proposition}[Existence of minimizers]\label{lemma esistenza}
If $K\in\K(n,s,\l)$, $\Omega$ is an open bounded set with $P(\Om)<\infty$,
 and~$g\in L^\infty(\Om)$, then there exist minimizers in \eqref{variational problem}.
 Moreover, $I(E,E^c\Om)<\infty$ for every minimizer $E$.
\end{proposition}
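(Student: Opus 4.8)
The plan is to run the direct method. First I would check that $\gamma$ is a finite real number. For the upper bound, pick any bounded set $E\cc\Om$ of finite perimeter with $|E|=m$ — for instance, the intersection of finitely many disjoint balls contained in $\Om$, of total volume exceeding $m$, with a half-space $\{x_1<\tau\}$ chosen so that the volume equals exactly $m$; then \eqref{kernelclass 0} gives $\E(E)\le(1+|\s|)\,P(E)\le(1+|\s|)\,\l\,P_s(E)<\infty$ (since $P_s(E)<\infty$ for bounded sets of finite perimeter, by the D\'avila-type estimate quoted above), and $\int_Eg\le\|g\|_{L^\infty(\Om)}\,m$. For the lower bound, $E\subset\Om$ forces $I(E,\Om^c)\le I(\Om,\Om^c)=P(\Om)<\infty$, whence $\E(E)+\int_Eg\ge-|\s|\,P(\Om)-\|g\|_{L^\infty(\Om)}\,m$. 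The very same bound yields the ``moreover'' part: if $E$ is a minimizer, then $I(E,E^c\Om)=\gamma-\int_Eg-\s\,I(E,\Om^c)\le\gamma+\|g\|_{L^\infty(\Om)}\,m+|\s|\,P(\Om)<\infty$.

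Now let $\{E_k\}$ be a minimizing sequence. From $\E(E_k)+\int_{E_k}g\le\gamma+1$ and $I(E_k,\Om^c)\le P(\Om)$ one gets $I(E_k,E_k^c\Om)\le C$, and hence, using $\Om^c\subset E_k^c$, a uniform bound $P(E_k)=I(E_k,E_k^c\Om)+I(E_k,\Om^c)\le C'$. Since the class $\K(n,s,\l)$ imposes no lower bound on $K$, this does not by itself give $L^1$-compactness of $\{1_{E_k}\}$, so I would pass to the relaxed energy
\[
\mathcal F(u):=I(u,1_\Om-u)+\s\,I(u,1_{\Om^c})+\int u\,g\,,\qquad
I(f,h):=\int_{\R^n}\int_{\R^n}f(x)\,h(y)\,K(x-y)\,dx\,dy\in[0,\infty]\,,
\]
defined for measurable $u:\R^n\to[0,1]$ vanishing a.e.\ outside $\Om$, so that $\mathcal F(1_E)=\E(E)+\int_Eg$ for $E\subset\Om$. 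The admissible set $\mathcal K=\{u:0\le u\le1_\Om,\ \int u=m\}$ is convex and weak-$*$ compact in $L^\infty$. On $\mathcal K$ the two linear terms are weak-$*$ continuous — the first because $x\mapsto\int_{\Om^c}K(x-y)\,dy$ lies in $L^1(\Om)$, its integral over $\Om$ being $P(\Om)<\infty$, and the second because $g\in L^1(\Om)$ — while $(f,h)\mapsto I(f,h)$ is weak-$*$ lower semicontinuous on nonnegative densities supported in a fixed bounded set (truncate $K$ to $K\wedge N$, for which the bilinear form is weak-$*$ continuous, then let $N\to\infty$ by monotone convergence). Hence $\mathcal F$ is weak-$*$ lower semicontinuous on $\mathcal K$, and extracting $1_{E_k}\weak u_*$ produces a minimizer of $\mathcal F$ on $\mathcal K$ with $\mathcal F(u_*)\le\liminf_k\mathcal F(1_{E_k})=\gamma$.

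The remaining, and genuinely delicate, point is to show that some minimizer of $\mathcal F$ on $\mathcal K$ is an indicator, i.e.\ that ``fuzzy'' minimizers can be ruled out; it is delicate precisely because the absence of a lower bound on $K$ blocks the two quick arguments (compactness in a fractional Sobolev space, or an outright energy obstruction). I would argue by slicing: with $E_t:=\{u_*>t\}$, the layer-cake identity gives $\mathcal F(u_*)=\int_0^1\int_0^1 I(E_t,\Om\setminus E_r)\,dt\,dr+\int_0^1\big(\s\,I(E_t,\Om^c)+\int_{E_t}g\big)\,dt$, where $t\mapsto|E_t|$ is nonincreasing with $\int_0^1|E_t|\,dt=m$; picking $t^*$ with $|\{u_*\ge t^*\}|\ge m\ge|\{u_*>t^*\}|$ and completing $\{u_*>t^*\}$ to a set $E^*\subset\Om$ with $|E^*|=m$, one uses the monotonicity of $I(E_t,\Om\setminus E_r)$ in $t,r$ against the normalization $\int_0^1|E_t|\,dt=m$ — a good-level-set/Jensen estimate reconciling the averaging inherent in the relaxation with the hard volume constraint — to get $\E(E^*)+\int_{E^*}g\le\mathcal F(u_*)$; for kernels that are not locally integrable at the origin this step is in fact automatic, since $\mathcal F$ is then $+\infty$ off indicators. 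As $E^*\subset\Om$ and $|E^*|=m$, one concludes $\gamma\le\E(E^*)+\int_{E^*}g\le\mathcal F(u_*)\le\gamma$, so $E^*$ is a minimizer. I expect this last slicing step to be the main obstacle; everything else is routine weak-$*$ compactness and lower semicontinuity.
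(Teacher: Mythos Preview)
Your approach and the paper's diverge at the compactness step. The paper's proof reads the hypothesis as $K\in\K(n,s,\lambda,\varepsilon)$ for some $\varepsilon>0$ (this is the very first line of the proof), so that the lower bound in \eqref{kernelclass 0} gives, for every ball $B_{\varepsilon/2}(p)$,
\[
P(F)\ge\frac1\lambda\,I_s\big(F\cap B_{\varepsilon/2}(p),\,F^c\cap B_{\varepsilon/2}(p)\big)\,.
\]
Combined with the uniform bound $P(E_j)\le C$ that you also derive, this yields local control of a fractional Sobolev seminorm and hence $L^1_{\rm loc}$-compactness of the minimizing sequence. The paper then concludes with the elementary lower semicontinuity Lemma~\ref{lemma lsc} (Fatou when $\sigma\ge0$; a rewriting trick using $P(\Omega)<\infty$ when $\sigma<0$). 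No relaxation is needed.

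You instead read $\K(n,s,\lambda)=\K(n,s,\lambda,0)$ literally as imposing no lower bound on $K$, and compensate with a relaxation to $[0,1]$-valued densities and weak-$*$ compactness. Your lower semicontinuity argument for the relaxed functional is fine: $K\wedge N\in L^1(\R^n)$, so convolution with $K\wedge N$ upgrades weak-$*$ convergence in $L^\infty$ to local uniform (hence strong $L^1$) convergence, making the truncated bilinear form jointly weak-$*$ continuous; then $I=\sup_N I_N$ is lower semicontinuous. But the final step --- passing from a $[0,1]$-valued minimizer $u_*$ to an indicator --- is not established. For kernels that fail to be locally integrable at the origin it is indeed automatic, but bounded kernels are allowed in $\K(n,s,\lambda,0)$, and there your layer-cake sketch does not close: the identity
\[
I(u_*,1_\Omega-u_*)=\int_0^1\!\!\int_0^1 I(E_t,\Omega\setminus E_r)\,dt\,dr
\]
is \emph{not} an average of the diagonal values $I(E_t,\Omega\setminus E_t)$, and the ``monotonicity'' you invoke runs in opposite directions in $t$ and in $r$, so no Jensen-type bound drops out. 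You flag this yourself as the main obstacle, and it is a genuine gap in the generality you have set up.

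In short: under the paper's intended hypothesis (a positive interaction range $\varepsilon$) your route is correct but unnecessarily elaborate; under the literal reading $\varepsilon=0$ your route has an unresolved step at the very point where the extra generality would matter.
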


We have already mentioned the fact that, as $s\to 1^-$, fractional perimeters converge to classical perimeters.
This is true also for our nonlocal capillarity energy.

\begin{proposition}
  [Convergence to the classical energy]\label{prop classical}
  If $\Om$ and $E$ are open sets with Lipschitz boundary and $E\subset\Om$, then
  \begin{eqnarray*}
  \lim_{s\to 1^-}(1-s)\,I_s(E,E^c\Om)&=&\k_n\,\H^{n-1}(\Om\cap\pa E)
  \\
  \lim_{s\to 1^-}(1-s)\,I_s(E,\Om^c)&=&\k_n\,\H^{n-1}(\pa E\cap\pa\Om)\,.
  \end{eqnarray*}
  In particular,
  \[
  \lim_{s\to 1^-}\frac{(1-s)}{\k_n}\,\E(E)=\H^{n-1}(\Om\cap\pa E)+\s\,\H^{n-1}(\pa\Om\cap\pa E)\,.
  \]
\end{proposition}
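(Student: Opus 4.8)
The plan is to split the energy into its interior part $I_s(E,E^c\Om)$ and its wall part $I_s(E,\Om^c)$ using the inclusion $E\subset\Om$, to identify the first with a localized version of the $BV$ limit of \cite{davilaBV}, and to recover the second by difference from the $\R^n$ limit recalled in the introduction. Throughout I take $\Om$, hence $E$, bounded, as everywhere else in the paper. Two elementary facts organize the argument. Since $E\subset\Om$, one has the disjoint decomposition $E^c=(\Om\cap E^c)\cup\Om^c$, so that
\[
P_s(E)=I_s(E,E^c)=I_s(E,E^c\Om)+I_s(E,\Om^c)\,.
\]
Since $E\subset\Om\subset\overline{\Om}$ and $\Om$ is open, $\pa E\subset\overline{E}\subset\overline{\Om}=\Om\cup\pa\Om$ with $\Om\cap\pa\Om=\varnothing$, hence $\pa E=(\Om\cap\pa E)\cup(\pa\Om\cap\pa E)$ is a disjoint decomposition and
\[
\H^{n-1}(\pa E)=\H^{n-1}(\Om\cap\pa E)+\H^{n-1}(\pa\Om\cap\pa E)\,.
\]
Finally I would pass the $\R^n$ limit of \cite{davilaBV} from the truncated kernel $K_s^\e$ to $K_s$: for bounded $E$,
\[
(1-s)\,\big(P_s(E)-P_s^\e(E)\big)=(1-s)\int_E\int_{E^c\setminus B_\e(x)}\frac{dx\,dy}{|x-y|^{n+s}}\le(1-s)\,|E|\,c_n\,\e^{-s}\,,
\]
which tends to $0$ as $s\to1^-$, so that $(1-s)\,P_s(E)\to\k_n\,\H^{n-1}(\pa^*E)=\k_n\,\H^{n-1}(\pa E)$, the last step since $\pa E$ is Lipschitz.

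For the first limit, the point is that because $E\subset\Om$ both integration variables in $I_s(E,E^c\Om)$ range over $\Om$, and the integrand is nonzero exactly where one of $x,y$ lies in $E$; therefore
\[
I_s(E,E^c\Om)=\frac12\int_\Om\int_\Om\frac{|1_E(x)-1_E(y)|}{|x-y|^{n+s}}\,dx\,dy\,,
\]
i.e.\ it is the Gagliardo double integral of $1_E$ over the bounded Lipschitz domain $\Om$, with $1_E\in BV(\Om)$ and $|D1_E|(\Om)=\H^{n-1}(\Om\cap\pa^*E)=\H^{n-1}(\Om\cap\pa E)<\infty$. Invoking the localized version of the limit of \cite{davilaBV} (see also \cite{Bourgain01anotherlook}) for $u=1_E$ on $\Om$, with normalization fixed to agree with the $\R^n$ statement above, gives
\[
\lim_{s\to1^-}(1-s)\int_\Om\int_\Om\frac{|1_E(x)-1_E(y)|}{|x-y|^{n+s}}\,dx\,dy=2\,\k_n\,\H^{n-1}(\Om\cap\pa E)\,,
\]
that is, $(1-s)\,I_s(E,E^c\Om)\to\k_n\,\H^{n-1}(\Om\cap\pa E)$.

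The second limit then follows by difference: from $P_s(E)=I_s(E,E^c\Om)+I_s(E,\Om^c)$, from $(1-s)P_s(E)\to\k_n\H^{n-1}(\pa E)$, from the limit just obtained, and from the splitting of $\H^{n-1}(\pa E)$,
\[
\lim_{s\to1^-}(1-s)\,I_s(E,\Om^c)=\k_n\,\H^{n-1}(\pa E)-\k_n\,\H^{n-1}(\Om\cap\pa E)=\k_n\,\H^{n-1}(\pa\Om\cap\pa E)\,.
\]
Multiplying by $\s$ and adding to the first limit, then dividing by $\k_n$, gives $\lim_{s\to1^-}\frac{1-s}{\k_n}\,\E(E)=\H^{n-1}(\Om\cap\pa E)+\s\,\H^{n-1}(\pa\Om\cap\pa E)$, which is the ``in particular'' statement.

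The step I expect to require the most care is the appeal to the \emph{localized} form of the $BV$ limit of \cite{davilaBV}: one has to check that, with the double integral over $\Om\times\Om$ and the present normalization of $K_s$, the limiting constant is exactly $2\k_n$ — this amounts to consistency with the already quoted $\R^n$ statement, and can be verified by applying both to a smooth set compactly contained in $\Om$ — and that Lipschitz regularity of $\pa\Om$ suffices for the localized statement, which it does. If one wishes to rely only on the $\R^n$ limit from the introduction, the first limit can instead be derived from the two one-sided estimates $\liminf_{s\to1^-}(1-s)I_s(E,E^c\Om)\ge\k_n\H^{n-1}(\Om\cap\pa E)$ and $\liminf_{s\to1^-}(1-s)I_s(E,\Om^c)\ge\k_n\H^{n-1}(\pa\Om\cap\pa E)$, each obtained by localizing the interaction on open sets $U\cc\Om$ — respectively on thin inner neighborhoods of $\pa\Om$ — discarding long-range tails as above and applying the $\R^n$ limit; since the two right-hand sides sum to $\lim_{s\to1^-}(1-s)P_s(E)$ and the two left-hand quantities sum to $(1-s)P_s(E)$, the elementary observation that $a_s+b_s\to a+b$ together with $\liminf a_s\ge a$ and $\liminf b_s\ge b$ forces $a_s\to a$ and $b_s\to b$ finishes the proof.
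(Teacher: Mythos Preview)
Your proof is correct. For the first limit you and the paper do the same thing: recognize $I_s(E,E^c\Om)$ as (half of) the Gagliardo double integral of $1_E$ over the bounded Lipschitz domain $\Om$ and invoke the localized result of \cite{davilaBV}. The route diverges on the second limit. You obtain it by subtraction, using $P_s(E)=I_s(E,E^c\Om)+I_s(E,\Om^c)$ together with the full-space limit $(1-s)P_s(E)\to\k_n\H^{n-1}(\pa E)$ and the boundary splitting $\H^{n-1}(\pa E)=\H^{n-1}(\Om\cap\pa E)+\H^{n-1}(\pa\Om\cap\pa E)$; this is clean and essentially a one-line deduction. The paper instead attacks $I_s(E,\Om^c)$ directly: it applies the localized measure convergence of \cite{davilaBV} on the thin collars $N_r(\Om^c)$, passes first $s\to 1^-$ and then $r\to 0^+$, and must control two error terms (the interaction of $E$ with $E^c\cap N_r(\Om^c)\cap\Om$, and the long-range interaction of $E\setminus N_r(\Om^c)$ with $\Om^c$). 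Your subtraction avoids this bookkeeping entirely under the stated hypotheses; the paper's direct argument, on the other hand, is what makes its slightly stronger version (for arbitrary sets of finite perimeter $E\subset\Om$ rather than Lipschitz $E$) go through with the same effort. Your alternative ``two liminfs summing to the limit'' argument at the end is also correct and is in the same spirit as the paper's collar computation.
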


\subsection{Euler-Lagrange equations} We now address the form
taken by the equilibrium conditions (Euler-Lagrange equations)
at boundary points of minimizers in the nonlocal capillarity problem. Notice that a minimizer $E$ in \eqref{variational problem}
could be in principle quite irregular, and actually
the property of being a minimizer is invariant under modifications of $E$ on and by a set of volume zero.
It is thus convenient to work with a robust notion of boundary of $E$ and set
\[
\pa E=\Big\{x\in\overline{\Om}:0<|E\cap B_r(x)|<\om_n\,r^n\ \ \forall r>0\Big\}\,.
\]
We shall then define the {\it regular part ${\rm Reg}_E$} and the {\it singular part $\S_E$} of $\pa E$ by
setting
\[
{\rm Reg}_E=\left\{x\in\overline{\Om\cap\pa E}:
\begin{split}
&\mbox{there exists ${{\varrho}}>0$ and $\a\in(s,1)$ s.t. $B_{{\varrho}}(x)\cap\pa E$ is a $C^{1,\a}$-manifold}
\\
&\mbox{with boundary, whose boundary points are in $\pa\Om$}
\end{split}\right\}
\]
and $\S_E=\pa E\setminus{\rm Reg}_E$, respectively. We expect the Euler-Lagrange equations
to hold in weak form at every point $x\in\pa E$ and in a stronger, pointwise form at every $x\in{\rm Reg}_E$;
see \eqref{weak euler lagrange intro} and \eqref{stationary set} below.
Since our primary goal here is understanding the qualitative features of the proposed
nonlocal capillarity model, and thus its possible physical interest, we shall not be concerned with the regularity
problem, which would consists in showing the smallness of $\S_E$. Let us recall that, in the local case, when $n=3$ the singular set is empty \cite{taylor77,luckhaus,dephilippismaggiARMA}.

In order to introduce the Euler-Lagrange equations for the nonlocal capillarity energy $\E$, it is convenient
to recall the form taken by the equilibrium conditions for local minimizers of nonlocal perimeters.
Given two sets $E$ and $F$ which
are equal outside of a bounded open set $A$ we formally have
\[
P(E)-P(F)=P(E,A)-P(F,A)
\]
where we have set
\[
P(E;A)=I(EA,E^cA)+I(EA,E^cA^c)+I(E^cA,EA^c)\,,
\]
and where the identity $P(E)-P(F)$ holds in general only in a formal sense as it involves the cancellation of the possibly infinite
interaction terms $I(EA^c,E^cA^c)=I(FA^c,F^cA^c)$ (as $E\cap A^c=F\cap A^c$ by assumption). We thus say that $E\subset\R^n$
is a critical point of $P$ in a bounded open set $A$ if
\[
\frac{d}{dt}\bigg|_{t=0}P(f_t(E),A)=0\,,
\]
for every family of diffeomorphisms $\{f_t\}_{|t|<\de}$ such that
\begin{equation}
  \label{diffeos critical point no constraint}
  f_0=\Id\qquad
  \spt(f_t-\Id)\cc A\qquad\forall |t|<\de\,.
\end{equation}
If $K\in \K^1(n,s,\l)$, then being a critical point is equivalent to the condition
\begin{equation}
  \label{weak euler lagrange P}
  \int_E\,\int_{E^c}\,\Div_{(x,y)}\big(K(x-y)\,(T(x),T(y))\big)\,dx\,dy=0\qquad\forall T\in C^1_c(A;\R^n)\,.
\end{equation}
where we have set
\[
\Div_{(x,y)}\big(K(x-y)\,(T(x),T(y))\big)=\Div_x\big(K(x-y)\,T(x)\big)+\Div_y\big(K(x-y)\,T(y)\big)\,.
\]
We refer to \eqref{weak euler lagrange P} as to the {\it weak form} of the Euler-Lagrange equation of $P$ in $A$. Notice that
\eqref{weak euler lagrange P} ``holds at every $x\in\pa E$'' in the sense that it is satisfied by every measurable set $E$ if restricted to vector fields $T$ with $\spt\,T\cap\pa E=\varnothing$. If $K\in \K^2(n,s,\l)$, then \eqref{weak euler lagrange P} implies that
\begin{equation}
  \label{strong euler lagrange P}
  \HH^K_{\pa E}(x)=0\qquad\forall x\in A\cap{\rm Reg_E}
\end{equation}
where $\HH^K_{\pa E}(x)$ is the {\it nonlocal mean curvature of $\pa E$ at $x$} (with respect to the kernel $K$), and is defined as
\begin{equation}
    \label{nonlocal mean curvature}
      \HH^K_{\pa E}(x):={\rm p.v.}\int_{\R^n}\Big(1_{E^c}(y)-1_E(y)\Big)\,K(x-y)\,dy\qquad x\in\pa E\,.
\end{equation}
This integral converges in the principal value sense as soon as $E$ is the epigraph of a $C^{1,\a}$-function with $\a>s$ in a neighborhood of $x$, and actually $\HH^K_{\pa E}$ is a continuous function on ${\rm Reg}_E$. Equation \eqref{strong euler lagrange P} is the {\it strong form} of \eqref{weak euler lagrange P}, and in the limit $s\to 1^-$ of
highly concentrated fractional kernels we have
\[
\lim_{s\to 1^-}(1-s)\,\HH^{K^\e_s}_{\pa E}(x)=\HH_{\pa E}(x)
\]
provided $\pa E$ is of class $C^2$ in a neighborhood of $x$.

Coming back to the capillarity problem, we say that $E\subset\Om$ is a (volume-constrained)
{\it critical point of $\E+\int g$} if
\begin{equation}
  \label{critical point}
  \frac{d}{dt}\bigg|_{t=0}\E(f_t(E))+\int_{f_t(E)}g=0\,,
\end{equation}
for every family of diffeomorphisms $\{f_t\}_{|t|<\de}$ such that, for every $|t|<\de$,
\begin{equation}
  \label{diffeos critical point}
  f_0=\Id\,,\quad
  \spt(f_t-\Id)\cc\R^n\,,\quad
  f_t(\Om)=\Om\,,\quad
  |f_t(E)|=|E|\,.
\end{equation}
Global minimizers
in \eqref{variational problem} are of course critical sets.  At regular points of a critical set of $\E+\int g$ the Euler-Lagrange equations
take the following form.

\begin{theorem}[Euler-Lagrange equation]\label{thm EL equation}
  Let $\Om$ be a bounded open set with $C^1$-boundary, $g\in C^1(\R^n)$, and $E$ be a critical point of $\E+\int g$. If $K\in\K^1(n,s,\s)$, then
  there exists a constant $c\in\R$ such that
  \begin{equation}
  \label{weak euler lagrange intro}
  \begin{split}
  &\iint_{E\times(E^c\cap\Om)}\Div_{(x,y)}\big(K(x-y)\,(T(x),T(y))\big)\,dxdy
  \\
  &+\s\iint_{E\times\Om^c}
  \Div_{(x,y)}\big(K(x-y)\,(T(x),T(y))\big)\,dxdy+\int_E\,\Div(g\,T)=c\,\int_E\,\Div\,T
  \end{split}
  \end{equation}
  for every $T\in C^\infty_c(\R^n;\R^n)$ with
  \[
  T\cdot\nu_\Om=0\qquad\mbox{on $\pa\Om$}\,.
  \]
  Moreover, if $K\in\K^2(n,s,\s)$, then
  \begin{equation}
    \label{stationary set}
      \HH^K_{\pa E}(x)-(1-\s)\,\int_{\Om^c}K(x-y)\,dy+g(x)=c\,,\qquad\forall x\in\Om\cap{\rm Reg}_E\,.
  \end{equation}
%%  where $\HH^{K,\s,\Om}_{\pa E}$ is the continuous function on $\Om\cap{\rm Reg}_E$ defined by
%%  \begin{equation}
%%    \label{nonlocal mean curvature Omega sigma}
%%      \HH^{K,\s,\Om}_{\pa E}(x):={\rm p.v.}
%%  \int_{\R^n}
%%  \Big(1_{E^c\cap\Om}(y)+\s\,1_{\Om^c}(y)-1_E(y)\Big)\,K(x-y)\,dy\qquad x\in\Om\cap{\rm Reg}_E\,.
%%  \end{equation}
%%  Finally, the identity
%%    \begin{equation}
%%    \label{mean curvature identity}
%%      \HH^{K,\s,\Om}_{\pa E}(x)=\HH^K_{\pa E}(x)-(1-\s)\,\int_{\Om^c}K(x-y)\,dy
%%  \end{equation}
%%  holds at every $x\in\Om\cap{\rm Reg}_E$.
\end{theorem}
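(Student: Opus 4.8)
The plan is to first establish the weak equation \eqref{weak euler lagrange intro} for an arbitrary admissible test field, and then to localise it at a regular boundary point to obtain \eqref{stationary set}. I would begin with two preliminary observations. Since $\Om$ is bounded with $C^1$ boundary and $E\subset\Om$, for $x\in E$ one has $\int_{\Om^c}K(x-y)\,dy\le\l\int_{\{|z|>\dist(x,\pa\Om)\}}|z|^{-n-s}\,dz\le C\,\dist(x,\pa\Om)^{-s}$, and $\dist(\cdot,\pa\Om)^{-s}\in L^1(\Om)$ because $s<1$, so $I(E,\Om^c)<\infty$ unconditionally; moreover, since being a critical point forces $t\mapsto\E(f_t(E))+\int_{f_t(E)}g$ to be finite and differentiable near $t=0$, one gets $\E(E)<\infty$, hence also $I(E,E^c\cap\Om)<\infty$. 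Next I would fix once and for all an auxiliary field $T_0\in C^\infty_c(\Om;\R^n)$ with $\int_E\Div T_0\ne0$; such a $T_0$ exists unless $1_E$ has vanishing distributional gradient in $\Om$, in which case $E$ agrees up to a null set with a union of connected components of $\Om$, so that $\Om\cap{\rm Reg}_E=\varnothing$ and $\int_E\Div T=0$ for every admissible $T$, and the theorem holds trivially with $c=0$; from here on this degenerate case is excluded.

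For the weak equation, given $T\in C^\infty_c(\R^n;\R^n)$ with $T\cdot\nu_\Om=0$ on $\pa\Om$, let $\Phi_t$ and $\Psi_\tau$ be the flows of $T$ and $T_0$; since $T$ is tangent to $\pa\Om$ and $\spt T_0\cc\Om$, these are compactly supported perturbations of $\Id$ fixing $\Om$ (hence $\Om^c$). As $(t,\tau)\mapsto|\Psi_\tau(\Phi_t(E))|$ is $C^1$ with $\pa_\tau$-derivative $\int_E\Div T_0\ne0$ at the origin, the implicit function theorem provides $\tau(t)$ of class $C^1$ with $\tau(0)=0$, $\tau'(0)=-\int_E\Div T\big/\int_E\Div T_0$, such that $f_t:=\Psi_{\tau(t)}\circ\Phi_t$ satisfies $|f_t(E)|=|E|$; then $f_t$ obeys \eqref{diffeos critical point} and has velocity $X:=\pa_t|_{t=0}f_t=T+\tau'(0)\,T_0$. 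Using $f_t(\Om)=\Om$ and $f_t(E^c)=f_t(E)^c$ I would rewrite each interaction term of $\E(f_t(E))$ as an integral over the fixed domains $E\times(E^c\cap\Om)$, $E\times\Om^c$ via the change of variables $x\mapsto f_t(x)$; differentiating at $t=0$ under the integral sign (justified, with $K\in\K^1$ and the finiteness $I(E,E^c\cap\Om),I(E,\Om^c)<\infty$, by a routine domination on the $\spt T$-region, the $\nabla K$-term being handled through $|\nabla K(z)|\le\l|z|^{-n-1-s}$ and $|X(x)-X(y)|\le\|DX\|_\infty|x-y|$), and using $\pa_t|_{t=0}(f_t(x)-f_t(y))=X(x)-X(y)$, $\pa_t|_{t=0}\det Df_t=\Div X$, one finds that $\frac{d}{dt}\big|_{t=0}\big[\E(f_t(E))+\int_{f_t(E)}g\big]$ equals the linear functional
\[
\mathcal L(X):=\iint_{E\times(E^c\cap\Om)}\!\!\Div_{(x,y)}\!\big(K(x-y)(X(x),X(y))\big)dxdy+\s\!\iint_{E\times\Om^c}\!\!\Div_{(x,y)}\!\big(K(x-y)(X(x),X(y))\big)dxdy+\int_E\Div(g\,X).
\]
Criticality \eqref{critical point} then gives $0=\mathcal L(X)=\mathcal L(T)+\tau'(0)\,\mathcal L(T_0)$; plugging in $\tau'(0)$ and setting $c:=\mathcal L(T_0)\big/\int_E\Div T_0$ — a number independent of $T$ — yields $\mathcal L(T)=c\int_E\Div T$, which is \eqref{weak euler lagrange intro}.

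To pass to \eqref{stationary set}, I would fix $x_0\in\Om\cap{\rm Reg}_E$ and $\varrho>0$ with $B_\varrho(x_0)\cc\Om$ and $B_\varrho(x_0)\cap\pa E$ a $C^{1,\a}$ hypersurface, $\a>s$, and test \eqref{weak euler lagrange intro} with $T\in C^\infty_c(B_\varrho(x_0);\R^n)$. Since $E\subset\Om$ gives $E^c=(E^c\cap\Om)\sqcup\Om^c$, and since $T$ vanishes on $\Om^c$ with $\dist(\spt T,\Om^c)>0$, Fubini recasts the two interaction terms of \eqref{weak euler lagrange intro} as $\iint_{E\times E^c}\Div_{(x,y)}\!\big(K(x-y)(T(x),T(y))\big)dxdy-(1-\s)\int_E\Div_x\!\big(T(x)\int_{\Om^c}K(x-y)\,dy\big)dx$. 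Now, for $K\in\K^2$, the first-variation identity for nonlocal perimeters — the identity underlying \eqref{weak euler lagrange P}$\Rightarrow$\eqref{strong euler lagrange P}, valid verbatim in $B_\varrho(x_0)$ because $\pa E$ is $C^{1,\a}$ there with $\a>s$ — turns the first term into $\int_{\pa E}\HH^K_{\pa E}\,T\cdot\nu_E\,d\H^{n-1}$, while the divergence theorem on $E$ (licit since every remaining integrand is smooth near $\pa E\cap\spt T$) converts the other volume integrals into surface integrals over $\pa E$, so that \eqref{weak euler lagrange intro} reduces to
\[
\int_{\pa E\cap B_\varrho(x_0)}\Big(\HH^K_{\pa E}(x)-(1-\s)\!\int_{\Om^c}\!\!K(x-y)\,dy+g(x)-c\Big)\,T(x)\cdot\nu_E(x)\,d\H^{n-1}(x)=0.
\]
Letting $T\cdot\nu_E\ge0$ concentrate at $x_0$ and using that the bracket is continuous on ${\rm Reg}_E$ (continuity of $\HH^K_{\pa E}$ there being recorded in the text), the bracket must vanish at $x_0$, which is \eqref{stationary set}.

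The hard part, throughout, is the non--locally--integrable kernel. In the first variation it forces one to produce, on the $\spt T$-region, a $t$-uniform integrable majorant for the difference quotients of the transported interaction integrands — this is where $K\in\K^1$ (via $|\nabla K|\le\l|\cdot|^{-n-1-s}$) and the finiteness $I(E,E^c\cap\Om)<\infty$ enter essentially. In the passage to the pointwise equation, the integration by parts producing the principal-value curvature $\HH^K_{\pa E}$ — and the divergence theorem on $E$ near $\pa E$ — are meaningful only because ${\rm Reg}_E$ is, by definition, a $C^{1,\a}$-manifold with $\a>s$; this $C^{1,\a}$, $\a>s$, input is exactly what one borrows from the nonlocal perimeter theory rather than reproving it here.
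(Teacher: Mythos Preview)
Your proof is correct and follows the same overall strategy as the paper: compute the first variation of $\E$ via change of variables, use an auxiliary volume-fixing field and a Lagrange multiplier argument to obtain the weak equation, then localize at a regular boundary point and integrate by parts to extract the pointwise identity. Two implementation choices differ. First, you differentiate under the integral sign directly with $K\in\K^1$, dominating via $|\nabla K(z)|\le\l|z|^{-n-s-1}$ together with the Lipschitz bound $|X(x)-X(y)|\le C|x-y|$; the paper instead truncates the kernel to $K_\delta=(1-\eta_\delta)K\in C^2_c(\R^n)$, computes the variation for $K_\delta$ by a second-order Taylor expansion, and then passes to the limit $\delta\to0^+$. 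Your route is shorter; the paper's has the advantage that every integration by parts in the passage to the strong form is classical (smooth integrands for each fixed $\delta$), and the singular-kernel issue is isolated in a single uniform-convergence statement (their Proposition~\ref{proposition curvatura continua}). Second, in deriving \eqref{stationary set} you split $E^c=(E^c\cap\Om)\sqcup\Om^c$ so as to isolate the full nonlocal mean curvature $\HH^K_{\pa E}$ and then apply a separate (classical) divergence theorem for the $\Om^c$-contribution; the paper keeps both interaction pieces together, arriving at the combined quantity $\HH^{K,\s,\Om}_{\pa E}$, which it only afterwards rewrites as $\HH^K_{\pa E}-(1-\s)\int_{\Om^c}K(\cdot-y)\,dy$. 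Your decomposition is conceptually a bit cleaner; the paper's packaging lets the approximation lemma do all the work at once.
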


We next investigate the contact angle condition, or Young's law, in the nonlocal setting. Let us recall that in the local setting
Young's law can be derived through integration by parts starting from the weak form of \eqref{mean curvature equation}, that is
\[
\int_{\pa E}\Div^{\pa E}T+\int_{\pa E}g\,(T\cdot\nu_E)=c\,\int_{\pa E}T\cdot\nu_E
\]
for every $T\in C^1_c(\R^n;\R^n)$ with $T\cdot\nu_\Om=0$ on $\pa \Om$; see, e.g., \cite[Theorem 19.8]{maggiBOOK}, and compare with \eqref{weak euler lagrange intro}.
In the nonlocal case we need to use a different approach, avoiding integration by parts. More precisely, the nonlocal
Young's law will be obtained
by taking blow-ups of \eqref{stationary set} along sequences of regular interior points converging to $\pa\Om\cap{\rm Reg}_E$. Here and in the following we shall use the notation
\[
A^{x_0,r}=\frac{A-x_0}r
\]
for the blow-up of $A\subset\R^n$ at scale $r>0$ around $x_0\in\R^n$.

\begin{theorem}[Nonlocal Young's law]\label{thm nonlocal young}
  Let $K\in\K^2(n,s,\l)$ be such that the homogeneous kernel $K^*$ is well-defined accordingly to \eqref{homogeneous kernel}, and let $g\in C^0(\R^n)$. Let $\Om$ be a bounded open set with $C^1$-boundary and $E$ be a volume-constrained
  critical set of $\E+\int g$. Given $x_0\in{\rm Reg}_E\cap\pa\Om$, let $H$ and $V$ be the half-spaces such that
  \[
  \mbox{$\Om^{x_0,r}\to H$ and $E^{x_0,r}\to H\cap V$ in $L^1_{{\rm loc}}(\R^n)$ as $r\to 0^+$}
  \]
  and set $\nu_E(x_0):=\nu_V(0)$. Then the angle between $H$ and $V$ must satisfy the identity
  \begin{equation}
    \label{young law nonlocal general}
    \HH^{K^*}_{\pa (H\cap V)}(v)-(1-\s)\,\int_{H^c}\,K^*(v-z)\,dz=0\,,\qquad\forall v\in H\cap\pa V\,,
  \end{equation}
  see
  \begin{figure}
    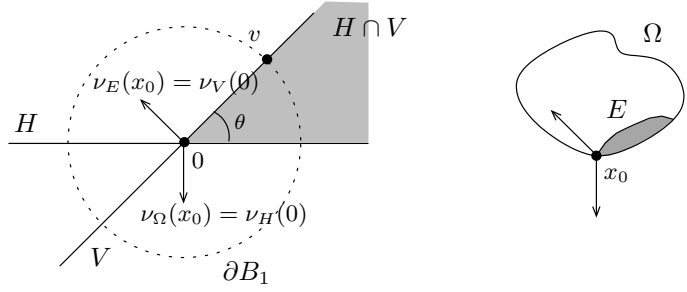\caption{{\small The nonlocal Young's law is computed at points $x_0\in\pa\Om$ where $E$ blow-ups a cone of the form $V\cap H$ where $V$ is an half-space, and $H$ is the half-space blow-up of $\Om$ at $x_0$. This law determines the angle between $V$ and $H$ via the identity \eqref{young law nonlocal general}.}}\label{fig nonlocalyoung}
  \end{figure}
  Figure \ref{fig nonlocalyoung}. In the special case when $K=K^\e_s$, and thus $K^*=K_s$, \eqref{young law nonlocal general} uniquely identifies the angle between $H$ and $V$. More precisely, for every $s\in(0,1)$ and $\s\in(-1,1)$ there exists a unique $\theta=\theta(s,\s)\in(0,\pi)$ such that
  \begin{equation}
  \label{young law fractional}
  \nu_E(x_0)\cdot\nu_\Om(x_0)=\nu_V(0)\cdot\nu_H(0)=\cos\big(\pi-\theta(s,\s)\big)\,.
  \end{equation}
  The function $\s\in(-1,1)\mapsto\theta(s,\s)$ is strictly increasing with
  \[
  \theta(s,0)=\frac\pi2\,,\quad\lim_{\s\to (-1)^+}\theta(s,\s)=0\,,\qquad\lim_{\s\to 1^-}\theta(s,\s)=\pi
  \]
  and
  \[
  \lim_{s\to 1^-}\cos\big(\pi-\theta(s,\s)\big)=\s\,.
  \]
  In particular, the fractional Young's law \eqref{young law fractional} converges to the classical Young's law in the limit $s\to 1^-$ of
  highly concentrated interaction kernels.
\end{theorem}

Theorem \ref{thm nonlocal young} shows that the nonlocal Young's law may take different forms depending on the considered kernels. Even in the class of isotropic fractional kernels $K_s$, the contact angle will depend on $s$ (in addition to its dependency on $\s$), although it will converge to the angle predicted by the classical Young's law in the limit $s\to 1^-$. The contact angle predicted by the classical Young's law may be actually observed in the nonlocal context at a characteristic distance from the boundary of the container. We plan to further investigate this issue in a subsequent paper, focusing on the sessile droplet problem.

We also remark that in the case $\s=0$ with isotropic kernel $K=K^\e_s$, the nonlocal Young's law always boils down to
\[
\nu_E(x_0)\cdot\nu_\Om(x_0)=0\qquad\forall x_0\in\pa\Om\cap{\rm Reg}_E\,.
\]
This is interesting as the corresponding variational problem
\[
\inf\Big\{I^\e_s(E,E^c\Om):E\subset\Om\,,|E|=m\Big\}
\]
is a natural fractional variant of the classical {\it relative isoperimetric problem in $\Om$}. Thus critical points in the relative isoperimetric problem and in all of its fractional variants share the same orthogonality condition at the boundary of $\Om$, independently from $\e$ and $s$. At the same time, the equilibrium interior condition $\HH^{K^\e_s}_{\pa E}={\rm constant}$ valid on $\Om\cap{\rm Reg}_E$ depends on the specific values of $s$ and $\e$.

\subsection{Interior regularity and other regularity properties} In the last part of our paper we address some regularity properties
of local (almost) minimizers of the nonlocal capillarity energy $\E$. In order to introduce the minimality condition that we shall consider,
let us notice that if $E$ and $F$ are equal outside of an open set $A$ (not necessarily contained in $\Om$,
\begin{figure}
  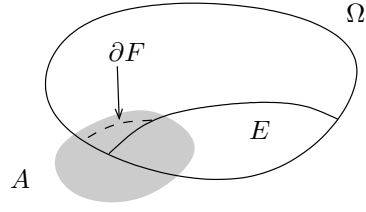\caption{{\small Since in Definition \ref{DFAL} we consider variations in an open set $A$ which is not contained in $\Omega$, we are in practice imposing on $E$ a Dirichlet condition along $\Omega\cap\pa A$ and a Neumann condition along $A\cap\pa\Om$.}}\label{fig almostmin}
\end{figure}
 see Figure \ref{fig almostmin}), that is, if  $F\cap A^c=E\cap A^c$, then one can formally compute (with the convention \eqref{convention} in force)
\begin{eqnarray*}
\E(E)-\E(F)
&=&I(E,E^c\Om)+\s\,I(E,\Om^c)-I(F,F^c\Om)-\s\,I(F,\Om^c)
\\
&=&I(E A,E^c\Om)+I(E A^c,E^c\Om A)+\s\,I(E A,\Om^c)
\\
&&-I(F A,F^c\Om)-I(F A^c,F^c \Om A)-\s\,I(F A,\Om^c)\,.
\end{eqnarray*}
We are thus led to consider the following kind of local (almost) minimality inequality.

\begin{definition}[Almost minimizers]\label{DFAL}
  {\rm Let $K\in\K(n,s,\l)$, $\Om$ and $A$ be open (possibly unbounded) sets in $\R^n$ such that
  \begin{equation}
    \label{condition Omega A}
      I(\Om A,\Om^c)<\infty\,,
  \end{equation}
  and let $\Lambda\in[0,\infty)$, $r_0\in(0,\infty]$ and $\s\in(-1,1)$. Given $E\subset\Om$, one says that $E$ is a {\it $(\Lambda,r_0,\s,K)$-minimizer in $(A,\Omega)$} if
  \begin{eqnarray}
    \label{Lambda r0 minimizer}
  &&I(E A,E^c \Om)+I(E A^c,E^c\Om A)+
\s\,I(E A,\Om^c)
  \\\nonumber
  &\le&
  I(F A,F^c \Om)+
I(F A^c,F^c \Om  A)+\s\,I(F A,\Om^c)+\Lambda\,|E\Delta F|\,,
  \end{eqnarray}
  for every $F\subset\Om$ with
$\diam(F\Delta E)<2\,r_0$ and $F\cap A^c=E\cap A^c$. Notice that \eqref{condition Omega A} guarantees that $I(F A,\Om^c)<\infty$ whenever $F\subset\Om$, so that, even when $\s<0$, the quantity
  \[
  I(F A,F^c\Om)+I(F A^c,F^c\Om A)+\s\,I(F A,\Om^c)\,,
  \]
  appearing on the right-hand side of \eqref{Lambda r0 minimizer} is well-defined in $(-\infty,\infty]$.}
\end{definition}

As proved in Corollary \ref{corollary minimi sono lambda minimi} below, if $E$ is a minimizer in \eqref{variational problem}, then there exist $\Lambda\ge0$ and $r_0>0$ (depending on $E$ and $\|g\|_{L^\infty(\Om)}$) such that $E$ is a $(\Lambda,r_0,\s,K)$-minimizer in $(\R^n,\Omega)$. The same is true for local minimizers of course, and the lower order term $\Lambda\,|E\Delta F|$ in the minimality inequality \eqref{Lambda r0 minimizer} actually allows to reabsorb various type of constraints (see \cite{Almgren76,tamanini} for more examples of this idea).

We are thus interested in understanding the regularity of $(\Lambda,r_0,\s,K)$-minimizers. Since at present an interior regularity theory for nonlocal variational problems has only been developed in the isotropic case of the fractional kernel $K_s$ (see \cite{caffaroquesavin,caputoguillen}) we shall mainly focus on this case. The first important remark is that on variations supported away from the boundary of $\Omega$, the minimality inequality \eqref{Lambda r0 minimizer} implies the type of almost--minimality
condition considered in \cite{caffaroquesavin,caputoguillen}. Thus, interior regularity is readily established.

\begin{theorem}[Interior regularity]\label{thm interior regularity}
  If $E$ is a $(\Lambda,r_0,\s,K_s)$-minimizer in $(A,\Om)$, then $A\cap\Om\cap{\rm Reg}_E$ is a $C^{1,\a}$-hypersurface for some universal $\a\in(0,1)$ and $A\cap\Om\cap\S_E$ is a closed set with Hausdorff dimension less than $n-3$.
\end{theorem}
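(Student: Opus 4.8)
The plan is to reduce the statement, on every ball compactly contained in $A\cap\Om$, to the interior regularity theory already available for almost-minimizers of the pure fractional $s$-perimeter $P_s$. First I would show that on such balls the minimality inequality \eqref{Lambda r0 minimizer} forces $E$ to be an almost-minimizer of $P_s$, with almost-minimality constant controlled by the distance to $\pa\Om$. Then I would invoke the improvement of flatness and partial $C^{1,\a}$-regularity of \cite{caffaroquesavin,caputoguillen}, together with the higher-order regularity and the dimensional estimate for the singular set of \cite{savinvaldinoci,barrerafigallivaldinoci,figallivaldinociLipschitz}. Finally I would check that the regular part so produced coincides with $A\cap\Om\cap{\rm Reg}_E$.

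For the reduction, fix $x_0\in A\cap\Om$, choose $\varrho\in(0,r_0)$ with $B_0:=B_\varrho(x_0)\cc A\cap\Om$, and set $d:=\dist(B_0,\pa\Om)>0$. For a sub-ball $B\subseteq B_0$ and any $F\subset\Om$ with $F\Delta E\cc B$, the competitor $F$ is admissible in \eqref{Lambda r0 minimizer}, as $F\cap A^c=E\cap A^c$ and $\diam(F\Delta E)<2\varrho<2r_0$. Writing $S_E:=I(EA,E^c\Om)+I(EA^c,E^c\Om A)$ for the liquid/air part of the left-hand side of \eqref{Lambda r0 minimizer}, one checks that $S_E=I(E,E^c\Om)-I(EA^c,E^c\Om A^c)$, where $I(EA^c,E^c\Om A^c)=I(FA^c,F^c\Om A^c)$ because $E=F$ on $A^c$: these are the only terms carrying the non-integrable tail, and they cancel in $S_E-S_F$. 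Splitting the surviving interactions according to whether their arguments meet $B$, and using $\Om^c\cap B=\varnothing$, one obtains (with the convention \eqref{convention} in force)
\[
S_E-S_F=\big(P_s(E;B)-P_s(F;B)\big)-\big(I(EB,\Om^c)-I(FB,\Om^c)\big),
\]
where $P_s(E;B)=I(EB,E^cB)+I(EB,E^cB^c)+I(E^cB,EB^c)$ is the relative fractional $s$-perimeter of $E$ in $B$. Inserting this in \eqref{Lambda r0 minimizer} and solving for $P_s(E;B)-P_s(F;B)$ leaves on the right-hand side, besides $\Lambda|E\Delta F|$, only the solid-wall corrections $I(EB,\Om^c)-I(FB,\Om^c)$ and $\s\big(I(EA,\Om^c)-I(FA,\Om^c)\big)$; since $E$ and $F$ differ only inside $B$ and $\int_{\Om^c}|x-y|^{-n-s}\,dy\le\int_{\R^n\setminus B_d}|z|^{-n-s}\,dz=C(n,s)\,d^{-s}$ for every $x\in B$, each of these is bounded in modulus by $C(n,s)\,d^{-s}|E\Delta F|$ (recall $|\s|<1$). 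Hence $E$ is a $(\Lambda',\varrho)$-almost-minimizer of $P_s$ in $B_0$, with $\Lambda':=\Lambda+2\,C(n,s)\,d^{-s}$; testing first with $F=E\setminus B$, whose right-hand side is finite, gives $P_s(E;B)<\infty$, so $E$ has locally finite fractional perimeter in $A\cap\Om$. The constant $\Lambda'$ depends on $B_0$ only through $\dist(B_0,\pa\Om)$, which is locally bounded below, so since interior regularity is a local matter this dependence is harmless.

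Applying, on each such $B_0$, the interior regularity theory for nonlocal almost-minimal boundaries listed above, and passing to the representative of $E$ for which the density estimates of \cite{caffaroquesavin} hold — so that $\pa E$ in the sense of this paper agrees with the topological boundary and with $\overline{\pa^*E}$ — one obtains a relatively closed set $\Sigma\subset A\cap\Om$ of Hausdorff dimension less than $n-3$ such that $(A\cap\Om\cap\pa E)\setminus\Sigma$ is locally a $C^{1,\a}$-hypersurface, for a universal $\a$. It remains to identify $\Sigma$ with $A\cap\Om\cap\S_E$. If $x\in(A\cap\Om\cap\pa E)\setminus\Sigma$, then in a small ball $B\cc\Om$ centered at $x$ the set $\pa E$ is a $C^{1,\a}$-hypersurface without boundary, hence trivially a $C^{1,\a}$-manifold with boundary whose (empty) set of boundary points lies in $\pa\Om$, so $x\in{\rm Reg}_E$. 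Conversely every point of $A\cap\Om\cap{\rm Reg}_E$ is a $C^{1,\a'}$-point of $\pa E$, hence outside $\Sigma$. Therefore $A\cap\Om\cap{\rm Reg}_E=(A\cap\Om\cap\pa E)\setminus\Sigma$ is a $C^{1,\a}$-hypersurface and $A\cap\Om\cap\S_E=\Sigma$ is closed in $A\cap\Om$ with Hausdorff dimension less than $n-3$.

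The one genuinely non-formal step is the reduction: one must make the cancellation of the unchanged (possibly infinite) interaction terms rigorous, and check that the adhesion term $\s\,I(EA,\Om^c)$ and the wall-correction $I(EB,\Om^c)$ perturb $P_s$ only by a quantity that is Lipschitz in $|E\Delta F|$. This last bound breaks down at $\pa\Om$ — it relies on $\dist(B,\pa\Om)>0$, i.e.\ on using only competitors supported away from the container wall — which is exactly why the statement concerns the interior $A\cap\Om$ and why the behaviour of $\pa E$ along $\pa\Om$ (a boundary regularity theory, not pursued here) is a separate and harder matter. The restriction to $K=K_s$ is likewise forced, since interior regularity for nonlocal geometric problems is at present available only for the isotropic fractional kernel.
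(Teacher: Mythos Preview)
Your proposal is correct and follows essentially the same approach as the paper: reduce, on balls $B\cc A\cap\Om$, the capillarity almost-minimality to almost-minimality of $P_s$ by bounding the wall-interaction corrections $I(\cdot\,B,\Om^c)$ by $C\,\dist(B,\pa\Om)^{-s}|E\Delta F|$, then invoke \cite{caffaroquesavin,caputoguillen} and \cite{savinvaldinoci}. The only difference is organizational: the paper works directly with the localized (finite) quantities in \eqref{Lambda r0 minimizer}, splitting $I(EA,E^c\Om)+I(EA^c,E^c\Om A)$ into a $P_s(E;B)$ piece, a term $\kappa$ unchanged under $E\mapsto F$, and a correction $-I(EB,E^c\Om^c)$, whereas you pass through the formal identity $S_E-S_F=(P_s(E;B)-P_s(F;B))-(I(EB,\Om^c)-I(FB,\Om^c))$ and then justify finiteness a posteriori; both routes yield the same inequality \eqref{almost min caputo}.
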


The regularity problem near points on $\pa\Om$ is more complex than its interior counterpart because it involves the study of a free boundary. Here we just address what is usually the first step in the analysis of a regularity problem, namely, we obtain perimeter and volume density estimates which hold {\it uniformly} up to the boundary of $\Om$. This problem, in the case $\s<0$, presents some additional difficulties with respect to the interior case. These difficulties are addressed by exploiting some geometric inequalities for fractional perimeters.

\begin{theorem}
  [Density estimates]\label{thm density estimate intro}
  Let $n\ge 2$, $s\in(0,1)$, $\s\in(-1,1)$, $\Lambda\ge0$, and $K=K_s^\e$ for some $\e>0$.
  If $\Om$ is either a bounded open set with $C^1$ boundary or an half-space, then there exist positive constants $C_0$ (depending on $n$, $s$, $\s$, and $\Lambda$), $c_*$ (depending on $n$ and $s$) and $\k$ (depending on $n$, $s$, $\s$ and $\Om$) such that if $E$ is a $(\Lambda,r_0,\s,K_s^\e)$-minimizer in $(A,\Omega)$, then
  \begin{equation}
    \label{upper perimeter estimate intro}
      I_s^\e(E B_r(x),(E B_r(x))^c)\le C_0\,r^{n-s}\,,
  \end{equation}
  whenever $B_r(x)\subset A$ and $r<\min\{r_0,c_*\,\k,c_*\,\e\}$. Moreover,
  \begin{equation}
    \label{volume density estimates intro}
      \frac{1}{C_0}\le \frac{|E\cap B_r(x)|}{r^n}\le 1-\frac1{C_0}
  \end{equation}
  whenever $B_r(x)\subset A$, $r<\min\{r_0,c_*\,\k,c_*\,\e\}$, and $x\in\overline{\Om\cap\pa E}$.
\end{theorem}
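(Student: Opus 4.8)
The plan is to adapt to the nonlocal energy the classical ``comparison with balls'' technique, the genuinely new ingredient being the control of the interaction of $E$ with the exterior $\Om^c$ of the container; the new difficulty is that this interaction enters weighted by $1-\s$, which approaches $2$ as $\s\to(-1)^+$.

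\textbf{Perimeter upper bound.} I would first fix $x$ and $r<\min\{r_0,c_*\,\k,c_*\,\e\}$ with $B_r(x)\subset A$ (assuming $B_r(x)\cap\Om\ne\varnothing$, else there is nothing to prove) and plug the competitor $F=E\setminus B_r(x)$ into \eqref{Lambda r0 minimizer}; it is admissible since $B_r(x)\subset A$ forces $F\subset\Om$, $F\cap A^c=E\cap A^c$, and $\diam(F\Delta E)\le2r<2r_0$. Expanding the localized energies, cancelling every interaction that does not involve $E\cap B_r(x)$, and using $E\subset\Om$ (so that $\Om^c\subset(E\cap B_r(x))^c$), the minimality inequality collapses to
\begin{equation}\label{eq:plan-key}
I_s^\e\big(E B_r(x),(E B_r(x))^c\big)\le2\,I_s^\e\big(E B_r(x),E\setminus B_r(x)\big)+(1-\s)\,I_s^\e\big(E B_r(x),\Om^c\big)+\Lambda\,|E\cap B_r(x)|.
\end{equation}
Then I would estimate the right-hand side: $I_s^\e(E B_r(x),E\setminus B_r(x))\le P_s(B_r)=c(n,s)\,r^{n-s}$; the term $\Lambda\,|E\cap B_r(x)|\le\Lambda\,\om_n r^n$ is of lower order for bounded $r$; and — this is where $\k$ is used — $I_s^\e(E B_r(x),\Om^c)\le I_s^\e(B_r(x)\cap\Om,\Om^c)\le C(n,s)\,r^{n-s}$ once $r<c_*\,\k$, because at those scales $\pa\Om$ (being of class $C^1$, or a hyperplane) is so flat inside $B_r(x)$ that the fractional perimeter of $\Om$ there is controlled by that of a half-space, uniformly in $\Om$ and in $x\in\overline\Om$. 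This gives \eqref{upper perimeter estimate intro}.

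\textbf{Lower volume bound.} Next, for $x\in\overline{\Om\cap\pa E}$, I would set $V(\rho)=|E\cap B_\rho(x)|$, so that $V(\rho)>0$ for all $\rho>0$ and $V(\rho)\le\om_n\rho^n\to0$, and use \eqref{eq:plan-key} with $B_\rho(x)$, $\rho\le r$. The key step is to tame the term $(1-\s)\,I_s^\e(E B_\rho(x),\Om^c)$: here I would invoke a geometric inequality for fractional perimeters, valid because $\pa\Om$ is $C^1$ (or flat), which for $\rho<c_*\,\k$ bounds $(1-\s)\,I_s^\e(E B_\rho(x),\Om^c)$ by $(1-\delta)\,I_s^\e(E B_\rho(x),(E B_\rho(x))^c)$ plus an error that is a small multiple of $V(\rho)^{(n-s)/n}$, with $\delta=\delta(\s)>0$ — this is the reason $\k$ must depend on $\s$ (the closer $\s$ is to $-1$, the smaller $\delta$, hence the flatter $\pa\Om$ must be). Absorbing this into \eqref{eq:plan-key}, and noting that $\rho<c_*\,\e$ makes $E\cap B_\rho(x)$ of diameter less than $\e$, so that the truncation removes only $O(\e^{-s}V(\rho))$ from $P_s$ and the sharp fractional isoperimetric inequality still gives $I_s^\e(E B_\rho(x),(E B_\rho(x))^c)\ge c(n,s)\,V(\rho)^{(n-s)/n}$, and absorbing also $\Lambda V(\rho)$ (licit since $V(\rho)\to0$), I would reach
\[
c(n,s,\s)\,V(\rho)^{(n-s)/n}\le I_s^\e\big(E B_\rho(x),E\setminus B_\rho(x)\big)\le c(n,s)\int_0^\rho\frac{V'(t)}{(\rho-t)^s}\,dt,
\]
the last inequality by slicing $E\cap B_\rho(x)$ along concentric spheres about $x$. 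Integrating this fractional differential inequality on $(0,r)$ and using $V(0)=0$ produces, by a routine comparison argument, $V(r)\ge r^n/C_0$.

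\textbf{Upper volume bound and main obstacle.} The upper bound would follow from the same scheme run with the admissible competitor $F=E\cup(B_\rho(x)\cap\Om)$ (note $\diam(F\Delta E)\le2\rho$), now tracking $W(\rho)=|B_\rho(x)\cap\Om\setminus E|$ and using $|B_\rho(x)\cap\Om|\ge c(n,s)\,\rho^n$ for $\rho<c_*\,\k$ (since $x\in\overline\Om$ and $\Om$ is $C^1$ or a half-space): this gives $W(r)\ge r^n/C_0$, hence the upper estimate in \eqref{volume density estimates intro}. The hard part is the geometric inequality bounding $I_s^\e(E B_\rho(x),\Om^c)$ when $\s<0$: without absorbing a strictly sub-unit fraction of it into the fractional perimeter of $E\cap B_\rho(x)$ the isoperimetric argument does not close, and this absorption is exactly what forces both the $C^1$ (or flat) hypothesis on $\pa\Om$ and the threshold $c_*\,\k$. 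The remaining ingredients — the bookkeeping giving \eqref{eq:plan-key}, the slicing bound on the cross interaction, and the ODE integration — are routine; the one point demanding care throughout is that all constants be uniform in $x\in\overline\Om$, which is why the flatness scale $\k$ of $\pa\Om$, and not a pointwise quantity, enters the hypotheses.
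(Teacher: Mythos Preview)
Your proposal is correct and follows essentially the same route as the paper: the same competitor $F=E\setminus B_r(x)$, the same geometric inequality bounding $I_s^\e(G,\Om^c)$ by $I_s^\e(G,G^c\Om)$ plus a volume error (formalized in the paper as a lemma, proved by flattening $\pa\Om$ via a $C^1$-diffeomorphism and invoking the minimality of half-spaces for $P_s$), the same fractional isoperimetric inequality, and the same slicing/ODE argument \`a la Caffarelli--Roquejoffre--Savin. The only cosmetic differences are that the paper derives the perimeter upper bound from the geometric inequality as well (rather than from a direct bound on $I_s^\e(B_r(x)\cap\Om,\Om^c)$ as you do), and obtains the upper volume bound by noting that $\Om\cap E^c$ is a $(\Lambda,r_0,-\s,K_s^\e)$-minimizer---which is exactly your competitor $F=E\cup(B_\rho(x)\cap\Om)$ seen from the complementary side.
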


\begin{remark}
  {\rm Theorem \ref{thm density estimate intro} holds for a much larger class of ``uniformly-$C^1$'' open sets, of which bounded open set with $C^1$-boundary and half-spaces are particular cases. The dependence of $\k$ from  $\Om$ can actually be expressed quite precisely in terms of this uniform $C^1$-property as explained in the course of the proof of Theorem \ref{thm density estimate intro}.}
\end{remark}

%%%%\subsection{Analysis of sessile droplets} Our next step to understand
%%%%the fundamental features of the proposed nonlocal capillarity model is the study of
%%%%the sessile droplet problem. This amounts in considering the minimization problem
%%%%\begin{equation}
%%%%  \label{sessile problem}
%%%%  \inf\Big\{I(E,E^cH)+\s\,I(E,H^c)+\int_E g(x_n)\,dx:|E|=m\Big\}
%%%%\end{equation}
%%%%where $H=\{x_n>0\}$, $g:(0,\infty)\to(0,\infty)$, $m>0$, and $\s\in(-1,1)$.
%%%%With respect to the case of \eqref{variational problem}, the existence of a global
%%%%minimizers becomes non-trivial due to lack of compactness issues. Once a minimizer
%%%%is shown to exists, we can exploit a symmetrization argument to deduce
%%%%its axial symmetry. Finally, we shall exploit symmetry to address the regularity
%%%%properties of the sessile droplet, and show the pointwise validity of the
%%%%Euler-Lagrange equations. We notice that, differently from the classical case,
%%%%when $g=0$ spheres shall not be minimizers in the nonlocal setting.
%%%%
%%%%\begin{theorem}[Nonlocal sessile drops]\label{SESSILE} If $K\in\K(n,s,\l,\s)$ and
%%%% $g\in L^\infty_{\rm loc}(\R)$ is non-negative, then there exist minimizers $E$ in
%%%% \eqref{sessile problem} whose horizontal sections $E\cap\{x_n=t\}$
%%%% are $(n-1)$-dimensional balls for every $t>0$, and such that $R_E=\pa E$. In the case
%%%% when $g=0$ and $K$ is the fractional kernel, these minimizers are not given by
%%%% the intersection of $H$ with a ball.
%%%%\end{theorem}
%%%%

\subsection{Organization of the paper} In section \ref{section existence} we address the existence of minimizers in the nonlocal capillarity problem, and the convergence of the fractional capillarity energy to the classical Gauss free-energy in the limit $s\to 1^-$. In section \ref{section Euler lagrange} and section \ref{section nonlocal young} we discuss, respectively, the deduction of the Euler-Lagrange equations in weak and in strong form, and of the nonlocal Young's law. In section \ref{section interior regularity} we explain how to quickly deduce interior regularity, while section \ref{section boundary regularity} is devoted to the proof of Theorem \ref{thm density estimate intro}. Finally, in appendix \ref{appendix blowup limits} we obtain a quite natural closure result for sequences of almost-minimizers which shall be useful in future investigations.

\bigskip

\noindent {\bf Acknowledgment:} FM was supported by NSF-DMS Grant 1265910 and NSF-DMS FRG Grant 1361122. EV was supported by ERC grant 277749 E.P.S.I.L.O.N. Elliptic PDEs and Symmetry of Interfaces and Layers for Odd Nonlinearities and PRIN grant 201274FYK7 Critical Point Theory and Perturbative Methods for Nonlinear Differential Equations.

%%%The rest of the paper is organized as follows: ???? BLA BLA ???? {\bf Qui metterei una discussione
%%%piu tecnica dove possiamo menzionare l'esistenza dei quasi minimi, le stime di densita, e
%%%il teorema di compattezza}

%\medskip

%%%As a final comment, let us point out that other types
%%%of nonlocal functionals can be taken into account to model
%%%far away interactions in droplets and some of them, may have an easier
%%%mathematical structure. For instance, when~$\sigma=0$
%%%and~$H$ is a halfspace
%%%and~${{\varrho}}$ is the reflection across~$H$, a simpler version
%%%of nonlocal capillarity functional is given by
%%%\begin{equation}\label{WR01}
%%%E\;\longmapsto\; I(E,E^c\cap H)+ I(E, {{\varrho}}(E^c\cap H)).\end{equation}
%%%This functional is indeed much more treatable in terms of
%%%monotonicity formulas and symmetrization arguments. Nevertheless,
%%%we decided to work with~\eqref{1.2bis} since it presents a stronger
%%%physical evidence (notice indeed that the iteraction of~$E$
%%%outside~$H$ in~\eqref{WR01} would occur by the reflection of~$E$,
%%%which looks rather unnatural from the physical point of view).
%%%Therefore, in a sense, some of the additional mathematical complications
%%%that arise in the analysis of the energy functional in~\eqref{1.2bis}
%%%are caused by our attempt to deal with an object that aims at
%%%reproducing ``real'' physical interactions.

\section{Existence of minimizers and convergence to the classical energy}\label{section existence} We start by proving the existence of minimizers in the variational problem \eqref{variational problem}, namely
\begin{equation}
  \label{variational problem recall}
   \g=\inf\Big\{\E(E)+\int_E\,g(x)\,dx:\;\, E\subset\Om\,,\;\,|E|=m\Big\}
\end{equation}
under the assumptions that $K\in\K(n,s,\l,\e)$, $\Omega$ is an open bounded set with $P(\Om)<\infty$, and~$g\in L^\infty(\Om)$, and where
\[
\E(E)=I(E,E^c\Om)+\s\,I(E,\Om^c)\,;
\]
see Proposition \ref{lemma esistenza}. The proof is based on a semicontinuity argument and on a direct
minimization procedure. We premise the following lower semicontinuity lemma.

\begin{lemma}[Lower semicontinuity]\label{lemma lsc} If $P(\Om)<\infty$, $E_j\subset\Om$, and $E_j\to E$ in $L^1(\Om)$, then
  \[
  \liminf_{j\to\infty}\E(E_j)\ge \E(E)\,.
  \]
\end{lemma}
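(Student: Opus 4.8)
The plan is to split the energy into a genuinely nonnegative part and the ``solid wall'' part weighted by $\s$, since it is only the (possibly negative) factor $\s$ that prevents a one-line Fatou argument. Because $E_j\subset\Om$ we have $E_j^c\cap\Om=\Om\setminus E_j$, hence
\[
\E(E_j)=I(E_j,\Om\setminus E_j)+\s\,I(E_j,\Om^c)\,,
\]
where $I(E_j,\Om\setminus E_j)\ge0$ and $0\le I(E_j,\Om^c)\le I(\Om,\Om^c)=P(\Om)<\infty$ (and the same bounds hold with $E$ in place of $E_j$). I would prove lower semicontinuity of the first, ``liquid/air'' term by Fatou's lemma, and \emph{continuity} of the second, ``solid wall'' term along $L^1(\Om)$-convergent sequences of subsets of $\Om$; adding the two gives the claim for any $\s\in(-1,1)$. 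As a first step I would pass to a subsequence, not relabeled, that realizes $\liminf_j\E(E_j)=:L$ and along which moreover $1_{E_j}\to1_E$ almost everywhere in $\Om$ (possible as $L^1$-convergence implies a.e.\ convergence along a subsequence).

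For the wall term, set $h(x)=\int_{\Om^c}K(x-y)\,dy\ge0$; by Tonelli $\int_\Om h=I(\Om,\Om^c)=P(\Om)<\infty$, so $h\in L^1(\Om)$ --- this is the only place the hypothesis $P(\Om)<\infty$ is used. Since $I(E_j,\Om^c)=\int_\Om1_{E_j}\,h$, the integrands are dominated by $h$ and converge a.e.\ to $1_E\,h$, so dominated convergence yields $I(E_j,\Om^c)\to I(E,\Om^c)$. For the interior term, view $(x,y)\mapsto1_{E_j}(x)\,1_{\Om\setminus E_j}(y)\,K(x-y)$ as nonnegative functions on the fixed space $\Om\times\Om$; as $K\ge0$ and $1_{\Om\setminus E_j}=1_\Om-1_{E_j}\to1_{\Om\setminus E}$ a.e., they converge a.e.\ to the analogous function associated with $E$, and Fatou's lemma gives $\liminf_j I(E_j,\Om\setminus E_j)\ge I(E,\Om\setminus E)$.

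To conclude, if $L=+\infty$ there is nothing to prove; otherwise all quantities involved are finite, and from $\E(E_j)\to L$ together with $\s\,I(E_j,\Om^c)\to\s\,I(E,\Om^c)$ we get $I(E_j,\Om\setminus E_j)\to L-\s\,I(E,\Om^c)$, so the Fatou estimate becomes $L-\s\,I(E,\Om^c)\ge I(E,\Om\setminus E)$, i.e.\ $L\ge\E(E)$. The single delicate point is the one anticipated above: when $\s<0$ Fatou cannot be applied to $\E(E_j)$ as it stands, and the proof goes through because $E\mapsto I(E,\Om^c)$ is not merely lower semicontinuous but continuous, which in turn rests on $h\in L^1(\Om)$, that is, on $P(\Om)<\infty$.
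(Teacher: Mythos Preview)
Your proof is correct. The paper takes a slightly different route in the case $\s<0$: it uses the algebraic identity
\[
\E(E)=-P(\Om)+I(E,E^c\Om)+(1-|\s|)\,I(E,\Om^c)+I(E^c\Om,\Om^c)\,,
\]
so that $\E(E)$ equals a finite constant plus a sum of three nonnegative interaction terms, and then applies Fatou's lemma directly to all three terms at once. Your approach instead keeps the original decomposition and proves that the wall term $E\mapsto I(E,\Om^c)$ is actually \emph{continuous} under $L^1(\Om)$-convergence (via $h\in L^1(\Om)$ and dominated convergence), while the interior term is handled by Fatou. The paper's rewriting is a bit slicker once the identity is spotted, since a single application of Fatou suffices; your argument yields the slightly stronger piece of information that the adhesion term is continuous rather than merely lower semicontinuous, and it makes the role of the hypothesis $P(\Om)<\infty$ very explicit as the condition that puts $h$ in $L^1(\Om)$.
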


\begin{proof}
  This is immediate by Fatou's lemma if $\s\ge0$. If $\s\in(-1,0)$, then we exploit the identity
  \begin{eqnarray*}
  \E(E)&=&-P(\Om)+I(E,E^c\Om)+P(\Om)-|\s|\,I(E,\Om^c)
  \\
  &=&  -P(\Om)+I(E,E^c\Om)+(1-|\s|)\,I(E,\Om^c)+I(E^c\Om,\Om^c)\,,
  \end{eqnarray*}
  and, again, Fatou's lemma, to complete the proof.
\end{proof}

\begin{proof}[Proof of Proposition~\ref{lemma esistenza}] We first remark that since $K\in\K(n,s,\l,\e)$, then
and any~$p\in\R^n$,
\begin{equation}\label{VGAL}
P(F)\ge \frac1\l\,I_s(F\,B_{\e/2}(p),F^c B_{\e/2}(p))\,,\qquad\forall F\subset\R^n\,.
\end{equation}
Indeed, if~$x$, $y\in B_{\e/2}(p)$,
then~$|x-y|\le |x-p|+|p-y|<\e$ and so,
by~\eqref{kernelclass 0},
$$ P(F)=I(F,F^c)
\ge \int_{F\cap B_{\e/2}(p)}
\int_{F^c \cap B_{\e/2}(p)}
K(x-y)\,dx\,dy
\ge \frac1\l\,\int_{F\cap B_{\e/2}(p)}
\int_{F^c\cap B_{\e/2}(p)}
\frac{ dx\,dy}{|x-y|^{n+s}} ,$$
that proves~\eqref{VGAL}. Now, if~$H$ is a half-space such that $|H\cap\Om|=m$ and $R>0$ is such that
$\Om\subset B_R$, then
  \[
  \E(H\cap\Om)=I(H\Om,(H\Om)^c\Om)+\s\,I(H\Om,\Om^c)\le I(H\, B_R,H^c\, B_R)+P(\Om)<\infty\,,
  \]
since $I(H\,B_R,H^c\,B_R)\le C(n,s)\,R^{n-s}$
thanks to~\eqref{kernelclass 0}. As a consequence,
we find that $\g<\infty$. Let $E_j\subset\Om$ be such that $\E(E_j)+\int_{E_j}g\to \g$, then for $j$ large enough
  \[
  \g+1+\int_\Om|g|\ge I(E_j,E_j^c\Om)+\s\,I(E_j,\Om^c)\ge I(E_j,E_j^c\Om)-P(\Om)\,,
  \]
  and thus
  \begin{eqnarray*}
  P(E_j)=I(E_j,E_j^c\Om)+I(E_j,E_j^c\Om^c)
  \le\g+1+\int_\Om|g|+2\,P(\Om)\,.
  \end{eqnarray*}
  Since $E_j\subset B_R$, using this and~\eqref{VGAL}, we find that, up to extracting subsequences, $E_j\to E$ in $L^1_{{\rm loc}}(\R^n)$ for some $E\subset\Om$ with $|E|=m$. By Lemma \ref{lemma lsc}, we conclude that~$E$ is a minimizer. Now we remark that
  \begin{equation}\label{BO:Q1}
  I(E,\Om^c)\le I(\Om,\Om^c)=P(\Om)<+\infty,\end{equation}
  and so the fact that~$\E(E)<\infty$ also implies that
  \begin{equation}\label{BO:Q2}
  I(E,E^c\Om)<+\infty\,,
  \end{equation}
  as claimed.
\end{proof}

We now turn to the convergence of the fractional capillarity energy to the Gauss free energy in the limit $s\to 1^-$, that is, we prove Proposition \ref{prop classical}. Recalling that, by definition,
\[
\k_n=\int_{S^n}|e\cdot\om|d\H^{n-1}_\om
\]
we shall actually prove a stronger result, valid for every set of finite perimeter contained in $\Om$. Here $\pa^*E$ denotes the reduced boundary of the set of finite perimeter $E$, see \cite{maggiBOOK}.

\begin{proposition}\label{prop davila}
  If $\Om$ is an open set with Lipschitz boundary and $E\subset\Om$ is a set of finite perimeter with $I_s(E,E^c)<\infty$, then
  \begin{eqnarray}\label{davila tesi 1}
  \lim_{s\to 1^-}(1-s)\,I_s(E,E^c\Om)&=&\frac{\k_n}2\,\H^{n-1}(\Om\cap\pa^* E)
  \\\label{davila tesi 2}
  \lim_{s\to 1^-}(1-s)\,I_s(E,\Om^c)&=&\frac{\k_n}2\,\H^{n-1}(\pa^* E\cap\pa\Om)
  \end{eqnarray}
\end{proposition}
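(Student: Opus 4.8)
The plan is to obtain both limits from three applications of the result of \cite{davilaBV} recalled above, namely $\lim_{s\to1^-}(1-s)\,P_s(G)=\tfrac{\k_n}2\,\H^{n-1}(\pa^*G)$ for every set $G$ of finite perimeter and finite measure (for such $G$ one automatically has $P_s(G)<\infty$ for all $s\in(0,1)$), combined with the structure theory of sets of finite perimeter. First I would reduce to the case in which $\Om$ and $E$ are bounded: replacing $\Om$ by $\Om\cap B_R$ and $E$ by $E\cap B_R$ alters the interactions appearing in \eqref{davila tesi 1}--\eqref{davila tesi 2} only by terms of the form $I_s(\,\cdot\,,\,\cdot\cap B_R^c)\le C(n,s)\,R^{n-s}$, which vanish after multiplication by $(1-s)$ as $s\to1^-$; since $|E|<\infty$ and $\H^{n-1}(\pa^*E)<\infty$, the portions of $\pa^*E$ and $\pa\Om$ on the right-hand sides are likewise stable along a suitable exhausting sequence $R\to\infty$. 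So from now on $E\subset\Om\subset B_R$ with $\Om$ Lipschitz, and all sets below have finite perimeter and finite measure.

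Put $F:=\Om\setminus E=E^c\cap\Om$, so that, up to null sets, $\R^n=E\sqcup F\sqcup\Om^c$, $E^c=F\sqcup\Om^c$, $F^c=E\sqcup\Om^c$ and $\Om=E\sqcup F$. Setting
\[
x_s:=(1-s)\,I_s(E,F),\qquad y_s:=(1-s)\,I_s(E,\Om^c),\qquad z_s:=(1-s)\,I_s(F,\Om^c)
\]
(all finite, as $E$, $\Om$, $F$ are bounded of finite perimeter), the above set identities give $P_s(E)=I_s(E,F)+I_s(E,\Om^c)$, $P_s(\Om)=I_s(E,\Om^c)+I_s(F,\Om^c)$ and $P_s(F)=I_s(E,F)+I_s(F,\Om^c)$, whence, by the D\'avila theorem applied to $E$, $\Om$ and $F$,
\[
x_s+y_s\to T_E,\qquad y_s+z_s\to A,\qquad x_s+z_s\to T_F,
\]
with $T_E:=\tfrac{\k_n}2\H^{n-1}(\pa^*E)$, $A:=\tfrac{\k_n}2\H^{n-1}(\pa\Om)$ and $T_F:=\tfrac{\k_n}2\H^{n-1}(\pa^*(\Om\setminus E))$, all finite. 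Adding the three relations, $x_s+y_s+z_s\to\tfrac12(T_E+A+T_F)$; hence each of $x_s$, $y_s$, $z_s$ converges individually, and in particular $y_s\to\tfrac12(T_E+A-T_F)$ while $x_s=(1-s)\,I_s(E,E^c\Om)\to\tfrac12(T_E-A+T_F)$.

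It then remains to identify $T_F$. The one geometric input is the following: since $E\subset\Om$, at $\H^{n-1}$-a.e.\ $x_0\in\pa^*E\cap\pa\Om$ the blow-up of $E$ at $x_0$ is a half-space contained in the half-space blow-up of $\Om$ at $x_0$, which forces the two half-spaces to coincide, so $\nu_E(x_0)=\nu_\Om(x_0)$ and $\Om\setminus E$ has density $0$ at $x_0$. Since moreover $\H^{n-1}$-a.e.\ point of $\pa\Om$ has $\Om$-density $\tfrac12$ and $E\subset\Om$, no such point has $E$-density $1$, so the standard formula for the reduced boundary of $\Om\cap E^c$ (see \cite{maggiBOOK}) yields
\[
\pa^*(\Om\setminus E)\ \overset{\H^{n-1}}{=}\ \big(\pa\Om\setminus\pa^*E\big)\cup\big(\Om\cap\pa^*E\big),
\]
that is, $T_F=A-\tfrac{\k_n}2\H^{n-1}(\pa\Om\cap\pa^*E)+\tfrac{\k_n}2\H^{n-1}(\Om\cap\pa^*E)$. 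Substituting this, together with $\H^{n-1}(\pa^*E)=\H^{n-1}(\Om\cap\pa^*E)+\H^{n-1}(\pa\Om\cap\pa^*E)$ (valid because $\pa^*E\subset\overline\Om$), into the two limits of the previous paragraph gives exactly $y_s\to\tfrac{\k_n}2\H^{n-1}(\pa^*E\cap\pa\Om)$ and $x_s\to\tfrac{\k_n}2\H^{n-1}(\Om\cap\pa^*E)$, which are \eqref{davila tesi 2} and \eqref{davila tesi 1}.

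I expect the main difficulty to be the third paragraph: turning the blow-up heuristic into a rigorous proof that $\nu_E=\nu_\Om$ holds $\H^{n-1}$-a.e.\ on the contact set $\pa^*E\cap\pa\Om$, and deducing from it the description of $\pa^*(\Om\setminus E)$, which requires some care with Federer's structure theorem and with the measure-theoretic properties of $E$ and $\Om$; the truncation in the first paragraph also needs attention when $E$ itself is unbounded. Apart from these points, the fractional machinery enters only through the three black-box invocations of the D\'avila theorem, and everything else is elementary manipulation of the three convergent sums.
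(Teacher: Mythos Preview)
Your argument is correct and is a genuinely different route from the paper's. The paper does not solve a three-by-three linear system; instead it invokes the \emph{localized} form of D\'avila's result, namely the weak-$*$ convergence of the measures $\mu_s^V$ and the identity $\k_n\,\H^{n-1}(V\cap\pa^*E)=2\lim_{s\to1^-}(1-s)I_s(EV,E^cV)$ for bounded Lipschitz $V$. It obtains \eqref{davila tesi 1} in one stroke by taking $V=\Om$, and then gets \eqref{davila tesi 2} by taking $V=N_r(\Om^c)$ (an $r$-neighborhood of $\Om^c$), letting $r\to0^+$, and showing by direct estimates that the cross-terms $I_s(E,E^cN_r(\Om^c)\Om)$ and $I_s(E\setminus N_r(\Om^c),\Om^c)$ disappear in the double limit.

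Your approach trades that localized measure machinery for three applications of the global statement to $E$, $\Om$ and $\Om\setminus E$, plus one purely measure-theoretic identification of $\pa^*(\Om\setminus E)$; this is cleaner and more symmetric, but it forces you to know the perimeter of $\Om\setminus E$ exactly, which is where the blow-up claim $\nu_E=\nu_\Om$ $\H^{n-1}$-a.e.\ on $\pa^*E\cap\pa\Om$ enters. That claim is standard (e.g.\ \cite[Theorem~16.3]{maggiBOOK}), so your flagged ``main difficulty'' is not a real obstacle. The paper's route avoids any structure-theoretic input on $\pa^*(\Om\setminus E)$, at the price of the somewhat delicate double-limit estimate. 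Both proofs implicitly need $\Om$ bounded (the paper applies \eqref{davila 2} with $V=\Om$, which requires it), so your truncation step is not an extra burden relative to the original; just note that the bound $I_s(\,\cdot\,,\,\cdot\,\cap B_R^c)\le C(n,s)R^{n-s}$ you wrote is not literally what controls the tails---the correct tail control uses $\H^{n-1}(\pa^*E)<\infty$ and the finiteness of $\min(|E|,|E^c|)$ guaranteed by the isoperimetric inequality.
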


\begin{proof}
  Given $V\subset\R^n$ we define a Radon measure $\mu_s^V$ on $\R^n$ by setting
  \begin{eqnarray*}
  \mu_s^V(A)&=&(1-s)\,\Big(I_s(EA,E^cV)+I_s(EV,E^cA)\Big)
  \\
  &=&(1-s)\int_A\,dx\int_V|1_E(x)-1_E(y)|\,K_s(x-y)\,dy\,.
  \end{eqnarray*}
  Notice that $\mu_s^V(\R^n)$ is finite as $\mu_s^V(\R^n)\le 2\,(1-s)I_s(E,E^c)$. By \cite[Lemma 2]{davilaBV} we have that
  \begin{equation}
    \label{davila 1}
      \mu_s^V\weak\k_n\H^{n-1}\llcorner(V\cap\pa^*E)\quad\mbox{weakly-* as Radon measures in $V$}
  \end{equation}
  whenever $V$ is an open set, with
  \begin{equation}
    \label{davila 2}
      \k_n\,\H^{n-1}(V\cap\pa^*E)=\lim_{s\to 1^-}\mu_s^V(V)= 2\,\lim_{s\to 1^-}(1-s)\,I_s(EV,E^cV)
  \end{equation}
  provided $V$ is open, bounded, with Lipschitz boundary. By applying \eqref{davila 2} with $V=\Om$ we find that
  \[
  \k_n\,\H^{n-1}(\Om\cap\pa^*E)=2\,\lim_{s\to 1^-}(1-s)\,I_s(E,E^c\Om)\,,
  \]
  that is \eqref{davila tesi 1}. We now set
  \[
  N_r(A)=\Big\{x\in\R^n:\dist(x,A)<r\Big\}\qquad r>0
  \]
  and apply \eqref{davila 2} with $V=N_r(\Om^c)$, to find
  \[
  \k_n\,\H^{n-1}(N_r(\Om^c)\cap\pa^*E)=2\,\lim_{s\to 1^-}(1-s)\,I_s(EN_r(\Om^c),E^cN_r(\Om^c))
  \]
  and thus
  \begin{equation}
    \label{doppiocolpo}
      \k_n\,\H^{n-1}(\pa\Om\cap\pa^*E)=2\,\lim_{r\to0^+}\lim_{s\to 1^-}(1-s)\,I_s(EN_r(\Om^c),E^cN_r(\Om^c))\,.
  \end{equation}
  We have
  \begin{eqnarray}\nonumber
    I_s(EN_r(\Om^c),E^cN_r(\Om^c))&=&I_s(E,E^cN_r(\Om^c))-I_s(E\setminus N_r(\Om^c),E^cN_r(\Om^c))
    \\\nonumber
    &=&I_s(E,\Om^c)-I_s(E,E^cN_r(\Om^c)\Om)-I_s(E\setminus N_r(\Om^c),E^cN_r(\Om^c))
    \\
    &&\label{identity}
  \end{eqnarray}
  where in the last step we have use the fact that $\Om^c\subset E^c\cap N_r(\Om^c)$. We now want to estimate the two negative terms on the right-hand side of \eqref{identity}. First, since $E\subset\Om$,
  \[
  (1-s)\,I_s(E,E^cN_r(\Om^c)\Om)\le\mu^{\Om}_s(N_r(\Om^c)\cap\Om)
  \]
  and since for a.e. $r>0$ we have $\H^{n-1}(N_r(\Om^c)\cap\pa^*E)=0$ we find
  \[
  \limsup_{s\to 1^-}(1-s)\,I_s(E,E^cN_r(\Om^c)\Om)\le\k_n\,\H^{n-1}(N_r(\Om^c)\cap\Om\cap\pa^*E)\qquad\mbox{for a.e. $r>0$}\,,
  \]
  where $\H^{n-1}(N_r(\Om^c)\cap\Om\cap\pa^*E)\to 0$ as $r\to 0^+$ thanks to $\Om\cap\pa\Om=\varnothing$ and $\H^{n-1}(\pa^*E)<\infty$; summarizing,
  \begin{equation}
    \label{infsup1}
      \lim_{r\to 0^+}\lim_{s\to 1^-}(1-s)\,I_s(E,E^cN_r(\Om^c)\Om)=0\,.
  \end{equation}
  Coming now to the second term on the right-hand side of \eqref{identity}, we have
  \begin{eqnarray*}
  I_s(E\setminus N_r(\Om^c),E^cN_r(\Om^c))&=&I_s(E\setminus N_r(\Om^c),E^cN_r(\Om^c)\Om)
  +I_s(E\setminus N_r(\Om^c),E^cN_r(\Om^c)\Om^c)
  \\
  &\le&I_s(E,E^cN_r(\Om^c)\Om)
  +I_s(E\setminus N_r(\Om^c),\Om^c)
  \end{eqnarray*}
  where the first term has been addressed in \eqref{infsup1}, while the second satisfies
  \begin{eqnarray*}
  I_s(E\setminus N_r(\Om^c),\Om^c)&=&\int_{E\setminus N_r(\Om^c)}dx\int_{\Om^c}\frac{dy}{|x-y|^{n+s}}
  \le\int_{E\setminus N_r(\Om^c)}dx\int_{B_r(x)^c}\frac{dy}{|x-y|^{n+s}}
  \\
  &\le& C(n)\,|E|\,\int_r^\infty \frac{dt}{t^{1+s}}=C(n)\frac{|E|}{s\,r^s}\,,
  \end{eqnarray*}
  so that
  \begin{equation}
    \label{infsup2}
      \lim_{s\to 1^-}(1-s)\,I_s(E\setminus N_r(\Om^c),\Om^c)=0\qquad\forall r>0\,.
  \end{equation}
  By combining \eqref{doppiocolpo}, \eqref{identity}, \eqref{infsup1} and \eqref{infsup2} we deduce \eqref{davila tesi 2}.
\end{proof}

\section{The Euler-Lagrange equation}\label{section Euler lagrange}

In this section we characterize the Euler-Lagrange equation for the nonlocal capillarity energy $\E$, see Theorem \ref{thm EL equation}.

\begin{lemma}[Weak form of the Euler-Lagrange equation]\label{lemma weak EL}
  Let $K\in\K^1(n,\s,\l)$. If $\Om$ is a bounded open set with $C^1$-boundary, $g\in C^1(\R^n)$, and $E$ is a critical point of $\E+\int g$,
  then there exists a constant $c\in\R$ such that
  \begin{equation}
  \label{weak euler lagrange}
  \begin{split}
  &\iint_{E\times(E^c\cap\Om)}\Div_{(x,y)}\big(K(x-y)\,(T(x),T(y))\big)\,dxdy
  \\
  &+\s\iint_{E\times\Om^c}
  \Div_{(x,y)}\big(K(x-y)\,(T(x),T(y))\big)\,dxdy+\int_E\,\Div(g\,T)=c\,\int_E\,\Div\,T
  \end{split}
  \end{equation}
  for every $T\in C^\infty_c(\R^n;\R^n)$ with
  \[
  T\cdot\nu_\Om=0\qquad\mbox{on $\pa\Om$}\,.
  \]
\end{lemma}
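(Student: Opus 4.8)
The plan is to compute the first variation of $\E(f_t(E)) + \int_{f_t(E)} g$ at $t=0$ along admissible flows and show that the resulting functional of the velocity field $T$ vanishes on divergence-free (volume-preserving) fields tangent to $\pa\Om$, then use a Lagrange-multiplier argument to get the constant $c$. First I would set up the flow: given $T \in C^\infty_c(\R^n;\R^n)$ with $T\cdot\nu_\Om = 0$ on $\pa\Om$, the condition $T\cdot\nu_\Om=0$ ensures that the associated flow $f_t$ (solving $\dot f_t = T\circ f_t$, $f_0=\Id$) satisfies $f_t(\Om)=\Om$, so $f_t$ is admissible in the sense of \eqref{diffeos critical point} except possibly for the volume constraint $|f_t(E)|=|E|$. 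I would handle the volume constraint in the standard way: pick an auxiliary field $T_0 \in C^\infty_c(\R^n;\R^n)$ with $T_0\cdot\nu_\Om=0$ on $\pa\Om$ and $\int_E \Div T_0 \ne 0$ (such a $T_0$ exists since $0 < |E| < |\Om|$ and $\pa E$ is nonempty), and for arbitrary $T$ as above consider the combined field $T + \tau(T) T_0$, choosing the scalar parameter $\tau(T)$ (depending smoothly, in fact linearly to first order) so that the corresponding flow exactly preserves $|E|$; then $\frac{d}{dt}\big|_{t=0}$ of the energy along this corrected flow vanishes by hypothesis.

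Next I would compute the derivative of each term. Using the change of variables $x = f_t(\xi)$, $y = f_t(\eta)$ with Jacobian $\det \nabla f_t$, one writes
\[
I(f_t(E), f_t(E^c\Om)) = \iint_{E\times(E^c\cap\Om)} K(f_t(\xi)-f_t(\eta))\,\det\nabla f_t(\xi)\,\det\nabla f_t(\eta)\,d\xi\,d\eta,
\]
and similarly for $I(f_t(E),\Om^c)$ (here one uses $f_t(\Om)=\Om$, hence $f_t(\Om^c)=\Om^c$ and $f_t(E^c\cap\Om) = f_t(E)^c\cap\Om$, so the decomposition of the complement is preserved by the flow). Differentiating at $t=0$, using $\frac{d}{dt}\big|_{t=0} f_t = T$, $\frac{d}{dt}\big|_{t=0}\det\nabla f_t = \Div T$, and $\frac{d}{dt}\big|_{t=0} K(f_t(\xi)-f_t(\eta)) = \nabla K(\xi-\eta)\cdot(T(\xi)-T(\eta))$ (legitimate since $K\in C^1(\R^n\setminus\{0\})$ and the $\K^1$ bound $|\nabla K(\zeta)|\le\l|\zeta|^{-n-s-1}$ gives a dominating integrable bound on the product domain, the singularity at $\xi=\eta$ being controlled because $T$ is Lipschitz so $|T(\xi)-T(\eta)|\le\|\nabla T\|_\infty|\xi-\eta|$), one collects exactly the integrand $\Div_x(K(x-y)T(x)) + \Div_y(K(x-y)T(y))$. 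For the potential term, $\frac{d}{dt}\big|_{t=0}\int_{f_t(E)} g = \int_E \Div(gT)$ by the same Jacobian computation. Thus the full first variation equals the left-hand side of \eqref{weak euler lagrange} minus $c\int_E\Div T$ once we identify $c$.

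Finally I would extract the constant. Applying the volume-corrected flow vanishing condition to the field $T + \tau(T)T_0$ gives
\[
\mathcal L(T) + \tau(T)\,\mathcal L(T_0) = 0,
\]
where $\mathcal L$ denotes the (linear in its argument) first-variation functional consisting of the two double-integral divergence terms plus $\int_E\Div(g\,\cdot\,)$. The volume constraint $\frac{d}{dt}\big|_{t=0}|f_t^{T+\tau T_0}(E)| = 0$ reads $\int_E\Div T + \tau(T)\int_E\Div T_0 = 0$, so $\tau(T) = -\big(\int_E\Div T\big)/\big(\int_E\Div T_0\big)$. Setting $c := \mathcal L(T_0)/\int_E\Div T_0$ yields $\mathcal L(T) = c\int_E\Div T$ for every admissible $T$, which is precisely \eqref{weak euler lagrange}. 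The main technical obstacle is the differentiation under the integral sign for the nonlocal terms near the diagonal $\{x=y\}$: one must verify that, although $K(x-y)$ itself is non-integrable near the diagonal, the difference quotients $\big(K(f_t(\xi)-f_t(\eta)) - K(\xi-\eta)\big)/t$ together with the Jacobian factors admit a $t$-uniform integrable majorant on $E\times(E^c\cap\Om)$ and on $E\times\Om^c$; this follows from writing $K(f_t(\xi)-f_t(\eta)) - K(\xi-\eta)$ via the fundamental theorem of calculus along the segment joining the two arguments, using $|f_t(\xi)-f_t(\eta)|\ge (1-Ct)|\xi-\eta|$ for small $t$ (so the segment stays away from $0$ comparably to $|\xi-\eta|$), and using the gradient bound from $\K^1$ plus the Lipschitz bound on $T$, which produces the integrable kernel $|\xi-\eta|^{-n-s}$. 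A secondary, easier point is justifying the existence of the auxiliary field $T_0$ tangent to $\pa\Om$ with $\int_E\Div T_0\neq 0$, which one obtains from the $C^1$-regularity of $\pa\Om$ and a suitable extension/cutoff of an inward-pointing construction.
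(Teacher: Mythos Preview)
Your overall strategy matches the paper's: compute the first variation of $\E(f_t(E))+\int_{f_t(E)}g$ along admissible deformations, then extract the constant $c$ by a Lagrange-multiplier argument. The one substantive technical difference is that you differentiate the nonlocal terms directly under the integral sign using the $\K^1$ gradient bound, whereas the paper first approximates $K$ by truncated kernels $K_\de=(1-\eta_\de)K\in C^2_c(\R^n)$, proves the variation formula for $K_\de$ via a second-order Taylor expansion, and then passes to the limit $\de\to0^+$ by dominated convergence. Your direct route is legitimate under the $\K^1$ hypothesis and is arguably cleaner; the approximation step buys the paper a slightly more explicit $O(t^2)$ remainder but is not essential for the first-variation identity.

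One point needs tightening. You say the flow of $T+\tau(T)T_0$ ``exactly preserves $|E|$,'' but with a fixed scalar $\tau$ this flow only makes $\frac{d}{dt}\big|_{t=0}|f_t(E)|=0$; the higher-order terms in $t$ do not vanish. Since the definition of critical point in \eqref{diffeos critical point} requires $|f_t(E)|=|E|$ for all small $t$, you must invoke an implicit-function-theorem correction (as the paper does in its Step~one, building $f_t(x)=x+tT(x)+s(t)S(x)$ with $s(0)=s'(0)=0$ and $S$ compactly supported away from $\spt T$) to obtain a genuinely volume-preserving family. After this fix your Lagrange-multiplier extraction of $c$ goes through exactly as written. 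A secondary remark: asserting that the majorant $|\xi-\eta|^{-n-s}$ is integrable on $E\times(E^c\cap\Om)$ amounts to $I_s(E,E^c\Om)<\infty$, which is not automatic from $K\in\K^1(n,s,\l)$ and $\E(E)<\infty$ alone (there is no lower bound on $K$ in that class); the paper's own proof is equally loose here, tacitly using a lower bound on $K$ to dominate $|\xi-\eta|^{-n-s}$ by $CK(\xi-\eta)$.
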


\begin{proof}
  {\it Step one}: Given $T\in C^\infty_c(\R^n;\R^n)$ satisfying
  \begin{equation}
  \label{initial velocity}
  T\cdot\nu_\Om=0\quad\mbox{on $\pa\Om$}\qquad \int_E\Div\,T=0
  \end{equation}
  the flux
  $$
  \left\{\begin{split}
  &\partial_t h_t(x) = T(h_t(x))
  \\
  &h_0(x)=x
  \end{split}\right. \qquad\forall |t|<\e\,,
  $$
  generated by $T$ satisfies $h_t(\Om)=\Om$ for every $|t|<\e$ and $|h_t(E)|=|E|+{\rm O}(t^2)$. By picking any vector field
  $S\in C^\infty_c(\Om;\R^n)$ with support a positive distance from the support of $T$ and such that
  \[
  \int_E\,\Div S>0
  \]
  and by exploiting a classical argument based on the implicit function theorem (see \cite[Theorem 19.8, Step one]{maggiBOOK})
  we can find $s\in C^\infty((-\e,\e))$ with $s(0)=s'(0)=0$ such that the family of diffeomorphisms
  \begin{equation}
    \label{bolza weierstrass}
      f_t(x)=x+t\,T(x)+s(t)\,S(x)\qquad (x,t)\in\R^n\times(-\e,\e)
  \end{equation}
  satisfies $f_t(\Om)=\Om$ and $|f_t(E)|=|E|$, that is \eqref{diffeos critical point}. In particular, by assumption,
  \begin{equation}
    \label{bellostazionario}
      \frac{d}{dt}\bigg|_{t=0}\E(f_t(E))+\int_{f_t(E)}g=0\,.
  \end{equation}
  We notice that by \eqref{bolza weierstrass} (see, e.g. \cite[Lemma 17.4]{maggiBOOK})
  \begin{equation}
    \label{INI:0}
      \nabla f_t=\Id+t\,\nabla T+{\rm O}(t^2)\,,\qquad
  Jf_t=\det(\nabla f_t)=1+t\,\Div\,T+{\rm O}(t^2)\,,
  \end{equation}
  uniformly on $\R^n$ as $t\to 0$, as well as
  \begin{equation}\label{INI}
 |f_t(x)-f_t(y)|\le C|x-y|\qquad\forall x,y\in\R^n\,.
 \end{equation}
 for some~$C>0$. Moreover, if $F$ is an arbitrary Borel set and $h\in C^1(\R^n)$ then
  \begin{equation}
    \label{variazione potenziale 1}
      \frac{d}{dt}\bigg|_{t=0}\int_{f_t(F)}h=\int_F\,\Div(h\,T)
  \end{equation}
  while if $F$ is of locally finite perimeter in an open neighborhood of $\spt\,T$ and $h\in C^0(\R^n)$, then
  \begin{equation}
    \label{variazione potenziale 2}
      \frac{d}{dt}\bigg|_{t=0}\int_{f_t(F)}h=\int_{\pa^*F}\,h\,(T\cdot\nu_F)\,d\H^{n-1}\,,
  \end{equation}
  see for example \cite[Proposition 17.8]{maggiBOOK}.

  \medskip

  \noindent {\it Step two}: We assume that $K\in C^2_c(\R^n)$ and prove that
  \begin{equation}
    \label{LA:aLA1}
    \begin{split}
      \frac{d}{dt}\bigg|_{t=0} I(f_t(E),f_t(E)^c\Om)=
  \iint_{E\times(E^c\cap\Om)} \Big[
  \Div_x \big( K(x-y)T(x)\big)+\Div_y \big( K(x-y)T(y)\big)
  \Big]\,dx\,dy
  \\
  \frac{d}{dt}\bigg|_{t=0}I(f_t(E),\Om^c)=
  \iint_{E\times\Om^c} \Big[
  \Div_x \big( K(x-y)T(x)\big)+\Div_y \big( K(x-y)T(y)\big)\Big]\,dx\,dy\,.
    \end{split}
  \end{equation}
  By \eqref{bolza weierstrass} and since $s'(0)=0$ we have
  \[
  |(f_t(x)-f_t(y))-(x-y)|\le C\,t\,|x-y|\qquad\forall x,y\in\R^n\,,|t|<\e\,,
  \]
  so that if , then
  \begin{equation}\label{zeta}
  |\zeta|\ge \frac{|x-y|}{2},
  \end{equation}
  whenever $x,y\in\R^n$, $|t|<\e$, and $\zeta$ is a point lying on the segment joining~$x-y$ and~$f_t(x)-f_t(y)$.
  {F}rom \eqref{D2} and~\eqref{zeta}, $|D^2 K(\zeta)|\le C\,|x-y|^{-n-s-2}$, and thus
  \begin{equation}\label{LA:2}
  |D^2 K(\zeta)|\,|f_t(x)-f_t(y)-(x-y)|^2\le
  \frac{Ct^2 \min\{ 1,|x-y|^2\}}{|x-y|^{n+s+2}}
  \end{equation}
  for $x$, $y$, $t$ and $\zeta$ as in \eqref{zeta}. Also, since \eqref{bolza weierstrass} and $s'(0)=0$ give
%  $$ f_t(x)-x-tT(x)=
%  \int_0^t T(f_\tau(x))-T(x)\,d\tau
%  =\int_0^t \left( \int_0^\tau
%  DT(f_\vartheta(x)) T(f_\vartheta(x))\,d\vartheta\right)\,d\tau$$
%  and so
  $$
  \big| f_t(x)-f_t(y)-(x-y)-t(T(x)-T(y))\big|
  \le C\,t^2 |x-y|\,,\qquad\forall x,y\in\R^n\,,|t|<\e\,,
  $$
  by using again~\eqref{D2} we find
  \begin{eqnarray*}
  && \left| \nabla K(x-y)\cdot \big(f_t(x)-f_t(y)-(x-y)\big)-
  t\nabla K(x-y)\cdot \big(T(x)-T(y)\big)
\right| \\
&\le& \frac{C}{|x-y|^{n+s+1}}\,
\big| f_t(x)-f_t(y)-(x-y)-t(T(x)-T(y))\big|\\&\le&
\frac{C t^2 \min\{ 1,|x-y|\}}{|x-y|^{n+s+1}}
\end{eqnarray*}
for every $x,y\in\R^n$, $|t|<\e$.
%for every~$x$ and~$y$ in~$\Omega$, again up to renaming~$C$ line after line.
{F}rom this and~\eqref{LA:2},
\begin{eqnarray}\label{bellozio}
K\big( f_t(x)-f_t(y) \big)=K(x-y) +t\nabla K(x-y)\cdot \big(T(x)-T(y)\big)
+t^2 \Upsilon(x,y),
\end{eqnarray}
where here and in the rest of this proof,~$\Upsilon$ denotes a generic function (which may change
from line to line) such that
\begin{equation} \label{UPS:L}
|\Upsilon(x,y)|\le
\frac{C  \min\{ 1,|x-y|\}}{|x-y|^{n+s+1}}.
\end{equation}
By combining \eqref{INI:0} and \eqref{bellozio} we find
\begin{equation}\label{CO:0}
\begin{split}
& K\big(f_t(x)-f_t(y)\big)\,Jf_t(x) \,Jf_t(y)\\
=\;& \Big[
K(x-y) +t\nabla K(x-y)\cdot \big(T(x)-T(y)\big)
+t^2 \Upsilon(x,y)\Big]\\
&
\,\cdot\Big[1+t\Div T(x)+O(t^2)\Big]\,\Big[1+t\Div T(y)+O(t^2)\Big]
\\
=\;&  K(x-y) +t\nabla K(x-y)\cdot \big(T(x)-T(y)\big)
+tK(x,y)\big( \Div T(x)+\Div T(y)\big)+t^2 \Upsilon(x,y)\,.
\end{split}\end{equation}
%up to changing name to~$\Upsilon$ (notice that the new~$\Upsilon$
%also satisfies~\eqref{UPS:L}).
Now we observe that
$$ \Div_x \big( K(x-y)T(x)\big)=\nabla K(x-y)\cdot T(x)+
K(x-y)\Div T(x).$$
Then, since~$K$ is even,
$$ \Div_y \big( K(x-y)T(y)\big)=\nabla K(x-y)\cdot T(y)+
K(x-y)\Div T(y)$$
and therefore
\begin{eqnarray*}&& \Div_x \big( K(x-y)T(x)\big)+\Div_y \big( K(x-y)T(y)\big)
\\&&\qquad= \nabla K(x-y)\cdot \big( T(x)+T(y)\big)+
K(x-y)\big(\Div T(x)+\Div T(x)\big).\end{eqnarray*}
Comparing this with~\eqref{CO:0}, we conclude that
\begin{equation*}
\begin{split}
& K\big(f_t(x)-f_t(y)\big)\,Jf_t(x) \,Jf_t(y)\\
=\;&  K(x-y) +t\Big[
\Div_x \big( K(x-y)T(x)\big)+\Div_y \big( K(x-y)T(y)\big)
\Big]
+t^2 \Upsilon(x,y).
\end{split}\end{equation*}
Consequently, by the area formula,
\begin{equation}\label{PL:AQ}
\begin{split}
&I(f_t(E),f_t(E)^c\cap\Om)=
I(E,E^c\cap\Om)\\&\qquad+
t\iint_{E\times(E^c\cap\Om)} \Big[
\Div_x \big( K(x-y)T(x)\big)+\Div_y \big( K(x-y)T(y)\big)
\Big]\,dx\,dy
\\&\qquad+t^2 \iint_{E\times(E^c\cap\Om)}\Upsilon(x,y)\,dx\,dy
\\&
I(f_t(E),\Om^c)=
I(E,\Om^c)\\&\qquad+
t\iint_{E\times\Om^c} \Big[
\Div_x \big( K(x-y)T(x)\big)+\Div_y \big( K(x-y)T(y)\big)
\Big]\,dx\,dy
\\&\qquad+t^2 \iint_{E\times\Om^c}\Upsilon(x,y)\,dx\,dy.
\end{split}\end{equation}
By~\eqref{kernelclass 0},
\eqref{UPS:L}
and~\eqref{BO:Q1} it follows that
$$ +\infty> I(E,\Om^c) \ge \iint_{{E\times \Om^c}\atop{|x-y|\le \e}}
\frac{dx\,dy}{\l\,|x-y|^{n+s}}
\ge \frac{1}{C\,\l}
\iint_{{E\times \Om^c}\atop{|x-y|\le \e}}\Upsilon(x,y)\,dx\,dy$$
and thus
$$ \iint_{E\times \Om^c} \Upsilon(x,y)\,dx\,dy<+\infty.$$
Similarly (using~\eqref{BO:Q2}
in lieu of~\eqref{BO:Q1}), we obtain that
$$ \iint_{E\times(E^c\cap \Om)} \Upsilon(x,y)\,dx\,dy<+\infty.$$
Accordingly, we find from~\eqref{PL:AQ} that
\begin{equation*}
\begin{split}
&I(f_t(E),f_t(E)^c\cap\Om)=
I(E,E^c\cap\Om)\\&\qquad+
t\iint_{E\times(E^c\cap\Om)} \Big[
\Div_x \big( K(x-y)T(x)\big)+\Div_y \big( K(x-y)T(y)\big)
\Big]\,dx\,dy+O(t^2)
\\\;&
I(f_t(E),\Om^c)=
I(E,\Om^c)\\&\qquad+
t\iint_{E\times\Om^c} \Big[
\Div_x \big( K(x-y)T(x)\big)+\Div_y \big( K(x-y)T(y)\big)
\Big]\,dx\,dy+O(t^2)\,.
\end{split}\end{equation*}
This completes the proof of \eqref{LA:aLA1}, thus of step two.

\medskip

\noindent {\it Step three}: We now claim that \eqref{LA:aLA1} holds with $K\in\K_*(n,\s,\l,\e)$
in place of a generic $K\in C^2_c(\R^n)$. For each~$\delta\in(0,1/2)$, let~$\eta_\delta \in C^\infty([0,+\infty))$ be
 such that~$\eta_\delta=1$
  in~$[0,\delta]\cup [1/\delta,+\infty)$, $\eta_\delta=0$
  in~$[2\delta, 1/(2\delta)]$, $|\eta'_\delta|\le 4/\delta$, and~$\eta_\delta\to0$ monotonically as~$\delta\to0$, and set
  \begin{equation}
    \label{Kdelta def}
      K_\de=(1-\eta_\de)\,K\,.
  \end{equation}
  If we let
  \[
  \phi_\delta(t):=\E_\de(f_t(E))\qquad\phi(t):=\E(f_t(E))
  \]
  then by monotone convergence, $\phi_\de(t)\to\phi(t)$ as $\de\to 0^+$ for every $|t|<\e$, where $\phi_\de$ and $\phi$ are smooth
  functions by the area formula (and since $I(E,E^c\Om),I(E,\Om^c)<\infty$).
  On noticing that
  \[
  \frac{\pa f_t}{\pa t}(x)=T(x)+s'(t)\,S(x)=:T_t(x)
  \]
  by \eqref{LA:aLA1} we have
  \begin{equation}
    \label{thanksto}
      \phi_\de'(t)=\bigg(\iint_{E\times(E^c\cap\Om)}+\s\,\iint_{E\times\Om^c}\bigg)
  \Big[
  \Div_x \big( K_\de(x-y)T_t(x)\big)+\Div_y \big( K_\de(x-y)T_t(y)\big)
  \Big]\,dxdy\,.
  \end{equation}
  We now claim that
  \begin{equation}
    \label{belclaim}
      \phi_\de'(t)\to
  \bigg(\iint_{E\times(E^c\cap\Om)}+\s\,\iint_{E\times\Om^c}\bigg)
  \Big[
  \Div_x \big( K(x-y)T_t(x)\big)+\Div_y \big( K(x-y)T_t(y)\big)
  \Big]\,dxdy
  \end{equation}
  uniformly on $|t|<\e$ as $\de\to 0^+$. By applying the mean value theorem to $\phi_\de$ and since
  $\phi_\de\to\phi$ as $\de\to 0^+$ pointwise, this will imply that
  \[
  \phi'(0)=\bigg(\iint_{E\times(E^c\cap\Om)}+\s\,\iint_{E\times\Om^c}\bigg)
  \Big[
  \Div_x \big( K(x-y)T(x)\big)+\Div_y \big( K(x-y)T(y)\big)
  \Big]\,dxdy
  \]
  as required. To prove \eqref{belclaim} we just notice that
  \begin{eqnarray*}
  &&\Div_x \big( K_\de(x-y)T_t(x)\big)+\Div_y \big( K_\de(x-y)T_t(y)\big)
  \\
  &&=
  K_\de(x-y)\,\big(\Div T_t(x)+\Div\,T_t(y)\big)+\nabla K_\de(x-y)\cdot (T_t(x)-T_t(y))
  \end{eqnarray*}
  where $|T_t(x)-T_t(y)|\le C\,|x-y|$ for every $x,y\in\R^n$ and $|t|<\e$, so that \eqref{D2} gives
  \[
  \Big|\Div_x \big( K_\de(x-y)T_t(x)\big)+\Div_y \big( K_\de(x-y)T_t(y)\big)\Big|\le \frac{C}{|x-y|^{n+s}}\le C\,K(x-y)\,,
  \]
  and, in conclusion, \eqref{belclaim} holds by dominated convergence and thanks to
  $I(E,E^c\Om),I(E,\Om^c)<\infty$ (recall \eqref{BO:Q1} and \eqref{BO:Q2}).

  \medskip

  \noindent {\it Step four}: Let us consider the linear functional on $T\in C^\infty_c(\R^n;\R^n)$
  defined by
  \[
  \L(T)=\bigg(\iint_{E\times(E^c\cap\Om)}+\s\,\iint_{E\times\Om^c}\bigg)
  \Div_{(x,y)}\Big( K(x-y)\,\big(T(x),T(y)\big)\Big)\,dxdy+\int_E\,\Div(g\,T)\,.
  \]
  By combining \eqref{bellostazionario}, \eqref{variazione potenziale 1} and step three
  we find that $\L(T)=0$ whenever $T$ satisfies \eqref{initial velocity}.
  If $T_1, T_2\in C^\infty(\R^n;\R^n)$ have disjoint supports and are such that
  \[
  T_1\cdot\nu_\Om=T_2\cdot\nu_\Om=0\quad\mbox{on $\pa\Om$}\qquad
  \int_E\,\Div T_2\ne 0\,,
  \]
  then
  \[
  T=T_1-\frac{\int_E\Div\,T_1}{\int_E\Div\,T_2}\,T_2
  \]
  is admissible in \eqref{initial velocity}, and thus satisfy $\L(T)=0$. Thus $\L(T_1)/\int_E\Div T_1=\L(T_2)/\int_E\Div T_2$, and
  the proof is completed by the arbitrariness of $T_1$ and $T_2$.
\end{proof}

In passing from Lemma \ref{lemma weak EL} to Theorem \ref{thm EL equation} we shall need the following proposition.

\begin{proposition}\label{proposition curvatura continua}
  If $\s\in(-1,1)$ and $K\in \K^1(n,s,\l)$, then for every $E\subset\Om$ the function
  \[
  \HH^{K,\s,\Om}_{\pa E}(x):={\rm p.v.}\,\int_{\R^n}K(x-y)\,
  \big( 1_{E^c\cap\Om}(y)+\s\,1_{\Om^c}(y)-1_E(y)\big)\,dy\qquad x\in\pa E
  \]
  is continuous on $\Om\cap{\rm Reg}_E$ with
  \begin{equation}
    \label{Hdelta uniforme a H}
      \HH^{K_\de,\s,\Om}_{\pa E}\to  \HH^{K,\s,\Om}_{\pa E}\qquad\mbox{as $\de\to 0^+$}
  \end{equation}
  uniformly on compact subsets of $\Om\cap{\rm Reg}_E$. Here, $K_\de$ is defined as in \eqref{Kdelta def}.
\end{proposition}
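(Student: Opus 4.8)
The plan is to localise around an arbitrary point $x_0\in\Om\cap{\rm Reg}_E$ and to peel off, from the principal value, a singular part controlled --- uniformly on compact subsets of $\Om\cap{\rm Reg}_E$ --- by comparison with the tangent half-space, using the $C^{1,\a}$-regularity of $\pa E$ with $\a>s$ and the evenness of $K$. First I would dispose of the boundary term. Since $E\subset\Om$, $1_{E^c\cap\Om}=1_{E^c}-1_{\Om^c}$, so that for $x\in\Om$
\[
\HH^{K,\s,\Om}_{\pa E}(x)=\HH^{K}_{\pa E}(x)-(1-\s)\int_{\Om^c}K(x-y)\,dy\,,
\]
where $\HH^{K}_{\pa E}(x)={\rm p.v.}\int_{\R^n}\big(1_{E^c}(y)-1_E(y)\big)K(x-y)\,dy$ is the nonlocal mean curvature \eqref{nonlocal mean curvature} (and similarly with $K_\de$ in place of $K$). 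On any compact $\mathcal C\subset\Om$, $\dist(\mathcal C,\Om^c)=:d_0>0$, so $x\mapsto\int_{\Om^c}K(x-y)\,dy$ coincides on $\mathcal C$ with the convolution of $1_{\Om^c}\in L^\infty$ and the $L^1$ kernel $z\mapsto K(z)\,1_{\{|z|\ge d_0\}}$, hence is continuous; moreover for $\de<d_0/2$ the difference $K-K_\de=\eta_\de K$ is (in the relevant range) supported in $\{|z|\ge 1/(2\de)\}$, so the $K_\de$-version of this term converges to it uniformly on $\mathcal C$. It therefore suffices to prove the statement with $\HH^{K}_{\pa E}$ in place of $\HH^{K,\s,\Om}_{\pa E}$.

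Next I would set up uniform local charts. Covering a compact $\mathcal C\subset\Om\cap{\rm Reg}_E$ by finitely many of the balls in the definition of ${\rm Reg}_E$ and using that $\Om$ is open, one finds $r_0>0$ (with $B_{r_0}(x)\subset\Om$ for all $x\in\mathcal C$), $\a_0\in(s,1)$ and $M<\infty$ such that for every $x\in\mathcal C$, $B_{r_0}(x)\cap\pa E$ is the graph over the tangent hyperplane $T_x$ of a $C^{1,\a_0}$ function vanishing together with its gradient at $x$ and of norm $\le M$; the one subtle point is that the minimum of the finitely many exponents coming from the cover is still $>s$. Writing $E_x$ for the half-space with boundary $T_x$ on the side of $E$, the $C^{1,\a_0}$-estimate gives $(E\Delta E_x)\cap B_{r_0}(x)\subset\{y:\dist(y,T_x)\le M\,|y-x|^{1+\a_0}\}$, and integrating $K\le\l\,|\cdot|^{-n-s}$ over this thin region in Fermi coordinates about $T_x$ yields
\[
\int_{(E\Delta E_x)\cap B_\rho(x)}K(x-y)\,dy\ \le\ C(n,s,\l,\a_0,M)\,\rho^{\,\a_0-s}\qquad\forall\,\rho\in(0,r_0]\,,\ x\in\mathcal C\,,
\]
and it is precisely here (in $\a_0-s>0$) that the requirement $\a>s$ in the definition of ${\rm Reg}_E$ is used.

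Now fix $x\in\mathcal C$ and $0<\varrho<\rho\le r_0$. Using $1_{E^c}-1_E=(1_{E_x^c}-1_{E_x})+2(1_{E_x}-1_E)$ and observing that $\int_{B_\rho(x)\setminus B_\varrho(x)}(1_{E_x^c}-1_{E_x})(y)\,K(x-y)\,dy=0$ --- after the translation $y\mapsto y-x$ this is an odd integrand (by evenness of $K$) over a domain symmetric about the origin --- one obtains $\int_{B_\rho(x)\setminus B_\varrho(x)}(1_{E^c}-1_E)\,K(x-y)\,dy=2\int_{B_\rho(x)\setminus B_\varrho(x)}(1_{E_x}-1_E)\,K(x-y)\,dy$, whose right-hand side converges absolutely as $\varrho\to0^+$ by the estimate above. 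Hence the principal value exists on $\Om\cap{\rm Reg}_E$ and, for every $\rho\in(0,r_0]$,
\[
\HH^{K}_{\pa E}(x)=\underbrace{2\int_{B_\rho(x)}(1_{E_x}-1_E)\,K(x-y)\,dy}_{=:A_\rho(x)}\ +\ \underbrace{\int_{B_\rho(x)^c}(1_{E^c}-1_E)\,K(x-y)\,dy}_{=:B_\rho(x)}\,,
\]
the splitting being independent of $\rho$ (again by the evenness cancellation on annuli). The same computation applies to $K_\de$ --- which is even, bounded away from the origin, and $\le K$ --- giving $\HH^{K_\de}_{\pa E}=A_\rho^\de+B_\rho^\de$ (no principal value needed) with $|A_\rho(x)|,\,|A_\rho^\de(x)|\le C\,\rho^{\,\a_0-s}$ on $\mathcal C$. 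Since $B_\rho$ is the convolution of $1_{E^c}-1_E\in L^\infty$ with $z\mapsto K(z)\,1_{\{|z|\ge\rho\}}\in L^1(\R^n)$, it is continuous on $\R^n$ (and so is $B_\rho^\de$); thus $\HH^{K}_{\pa E}=A_\rho+B_\rho$ displays $\HH^{K}_{\pa E}$ on $\mathcal C$ as a uniform limit (as $\rho\to0^+$) of continuous functions, whence $\HH^{K}_{\pa E}$, and therefore $\HH^{K,\s,\Om}_{\pa E}$, is continuous on $\Om\cap{\rm Reg}_E$. Finally, for $\de<\rho/2$ one has $\eta_\de\equiv0$ on $[\rho,1/(2\de)]$, so $|B_\rho^\de-B_\rho|\le\l\int_{|z|\ge 1/(2\de)}|z|^{-n-s}\,dz=C'\de^{\,s}$ uniformly on $\mathcal C$; hence $\sup_{\mathcal C}\big|\HH^{K_\de}_{\pa E}-\HH^{K}_{\pa E}\big|\le 2C\,\rho^{\,\a_0-s}+C'\de^{\,s}$, and letting $\de\to0^+$ and then $\rho\to0^+$ proves \eqref{Hdelta uniforme a H} on $\mathcal C$.

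I expect the main obstacle to be the uniform chart construction and the resulting $\rho$-uniform estimate of the second paragraph: a naive dominated-convergence proof of continuity fails, because as $x$ varies the sets $E\Delta E_x$ fill a solid cone around $T_{x_0}$, on which $|z|^{-n-s}$ is not integrable, so one must genuinely isolate the singular contribution into $A_\rho$ and control its smallness uniformly via the tangent-plane estimate and the evenness cancellation. Everything else is routine once that estimate is in hand.
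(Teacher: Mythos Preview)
Your proof is correct and follows essentially the same approach as the paper: both first reduce to the pure nonlocal mean curvature $\HH^K_{\pa E}$ via the identity $\HH^{K,\s,\Om}_{\pa E}=\HH^K_{\pa E}-(1-\s)\int_{\Om^c}K(x-\cdot)$, and then treat the uniform convergence of $\HH^{K_\de}_{\pa E}$ to $\HH^K_{\pa E}$ on compact subsets of $\Om\cap{\rm Reg}_E$ by the tangent-half-space comparison and the evenness cancellation. The only difference is that the paper outsources this last step to \cite[Proposition~6.3]{F2M3}, whereas you spell out the argument in full; your decomposition $\HH^K_{\pa E}=A_\rho+B_\rho$ with the uniform bound $|A_\rho|\le C\rho^{\a_0-s}$ is exactly the content of that reference.
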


\begin{proof}
  Since $K_\de\in C^1_c(\R^n)$ we definitely have
  \begin{equation}
    \label{mean curvature identity delta}
      \HH^{K_\de,\s,\Om}_{\pa E}(x)=\HH^{K_\de}_{\pa E}(x)-(1-\s)\,\int_{\Om^c}K_\de(x-y)\,dy\qquad\forall x\in\pa E\,,
  \end{equation}
  see \eqref{nonlocal mean curvature} for the definition of $\HH^{K_\de}_{\pa E}$. It is shown in \cite[Proposition 6.3]{F2M3} that the continuous functions $\{\HH^{K_\de}_{\pa E}\}_{\de}$ converge uniformly on compact subsets of $\Om\cap{\rm Reg}_E$ to $\HH^K_{\pa E}$. An identical argument leads to obtain \eqref{Hdelta uniforme a H}, proves the continuity of $\HH^{K,\s,\Om}_{\pa E}$ on ${\rm Reg}_E\cap\Om$.
\end{proof}

\begin{proof}[Proof of Theorem~\ref{thm EL equation}] Let $K_\de\in C^2_c(\R^n)$ be defined as in \eqref{Kdelta def}. As soon as $E$ has finite perimeter, one has (by~\cite[Formula~(15.11)]{maggiBOOK})
$$
\int_{E} \Div_x \big( K_\de(x-y)T(x)\big)\,dx
= \int_{\pa^* E} K_\de(x-y)T(x)\cdot\nu_{E}\,
d\H^{n-1}_x$$
where $\pa^*E$ denotes the reduced boundary of $\pa E$ and $\nu_E$ its measure-theoretic outer unit normal. In particular,
for any set~$F$ that does not intersect~$E$ we find
\begin{equation}\label{09:AS}
\begin{split}
\int_{E\times F} \Div_x \big( K_\de(x-y)T(x)\big)\,dx\,dy
=\;\int_F
\left( \int_{\pa^* E}
K_\de(x-y)T(x)\cdot\nu_{E}\,
d\H^{n-1}_x\right)\,
dy.\end{split}\end{equation}
Similarly, for any set~$F$ that does not intersect~$E$,
$$ \int_{F} \Div_y \big( K_\de(x-y)T(y)\big)\,dy
= \int_{\pa^* F} K_\de(x-y)T(y)\cdot\nu_{F}\,
d\H^{n-1}_y$$
and therefore, integrating in~$E$ and changing the names of the variables,
\begin{eqnarray*}
\iint_{E\times F} \Div_y \big( K_\de(x-y)T(y)\big)\,dx\,dy
&=& \int_E\left(
\int_{\pa^* F} K_\de(x-y)T(y)\cdot\nu_{F}
\,d\H^{n-1}_y\right)\,dx
\\&=& \int_E
\left(\int_{\pa^* F}
 K_\de(x-y)T(x)\cdot\nu_{F}\,d\H^{n-1}_x
\right)\,dy.
\end{eqnarray*}
Using this formula and~\eqref{09:AS}
with~$F=E^c\cap\Om$, we obtain that
\begin{equation}\label{09:AB1}\begin{split}
&\iint_{E\times (E^c\cap\Om)}
\Div_{(x,y)} \big( K_\de(x-y)(T(x),T(y))\big)\,dx\,dy\\
=\;& \int_{E^c\cap\Om}
\left(
\int_{\pa^* E}
K_\de(x-y)T(x)\cdot\nu_{E}\,d\H^{n-1}_x
\right)\,
dy\\&\qquad+\int_E\left(
\int_{\pa^* (E^c\cap\Om)}
K_\de(x-y)T(x)\cdot\nu_{E^c\cap\Om}\,d\H^{n-1}_x
\right)\,dy\\
=\;& \int_{\Omega\cap \pa^* E}
T(x)\cdot\nu_{E} \left(\int_{\R^n} K_\de(x-y)\,
\big( 1_{E^c\cap\Om}(y)-1_E(y)\big)\,dy\right)\,d\H^{n-1}_x\,,
\end{split}
\end{equation}
and analogously
\begin{equation}\label{09:AB2}\begin{split}
&\iint_{E\times \Om^c}\Div_{(x,y)} \big( K_\de(x-y)(T(x),T(y))\big)\,dx\,dy
\\
=\;& \int_{\Omega\cap \pa^* E}
T(x)\cdot\nu_{E} \left(\int_{\R^n}K_\de(x-y)\,
1_{\Om^c}(y)\,dy\right)\,d\H^{n-1}_x.\end{split}
\end{equation}
In particular,
\begin{equation}\label{to the limit}
  \begin{split}
    &\bigg(\iint_{E\times(E^c\cap\Om)}+\s\iint_{E\times \Om^c}\bigg)\Div_{(x,y)} \big( K_\de(x-y)(T(x),T(y))\big)\,dx\,dy
\\&=\int_{\Om\cap\pa^*E}(T\cdot\nu_E)\,\HH^{K_\de,\s,\Om}_{\pa E}\,d\H^{n-1}\,.
  \end{split}
\end{equation}
Let us now fix $x\in\Om\cap{\rm Reg}_E$ and $T\in C^1_c(B_{{{\varrho}}}(x)\cap\Om)$ with ${{\varrho}}>0$ such that $B_{2\,{{\varrho}}}(x)\cap\pa E\subset\Om\cap{\rm Reg}_E$. In this way, $\HH^{K_\de,\s,\Om}_{\pa E}$ converges uniformly to $\HH^{K,\s,\Om}_{\pa E}$ on $\spt\,T$ and thus the right-hand side of \eqref{to the limit} converges to $\int_{B_{{{\varrho}}}(x)\cap\pa E}(T\cdot\nu_E)\,\HH^{K,\s,\Om}_{\pa E}$. Since we have already shown in the proof of Lemma \ref{lemma weak EL} that in the limit $\de\to 0^+$ we can take replace $K_\de$ by $K$  on the left-hand side of \eqref{to the limit}, we conclude that
\begin{equation}\nonumber
  \begin{split}
&    \bigg(\iint_{E\times(E^c\cap\Om)}+\s\iint_{E\times \Om^c}\bigg)\Div_{(x,y)} \big( K(x-y)(T(x),T(y))\big)\,dx\,dy
\\&=\int_{B_{{{\varrho}}}(x)\cap\pa E}(T\cdot\nu_E)\,\HH^{K,\s,\Om}_{\pa E}\,d\H^{n-1}\,.
  \end{split}
\end{equation}
for every $x\in\Om\cap{\rm Reg}_E$ and $T\in C^1_c(B_{{{\varrho}}}(x)\cap\Om)$, for ${{\varrho}}>0$ depending on $x$. By combining this identity with \eqref{weak euler lagrange}, $\int_{E}\Div(T\,g)=\int_{B_{{{\varrho}}}(x)\cap\pa E}g\,(T\cdot\nu_E)$, and the arbitrariness of $T$, we finally deduce \eqref{stationary set}.
\end{proof}

\section{Nonlocal Young's law}\label{section nonlocal young}
This section addresses the proof of Theorem \ref{thm nonlocal young}. We premise a simple technical lemma. Here, we decompose $x\in\R^n$ as $x=(x',x_n)\in\R^{n-1}\times\R$ and set
\[
\C=\{x\in\R^n:|x'|<1\,,|x_n|<1\}\qquad{\mbox{and}}\qquad \D=\{z\in\R^{n-1}:|z|<1\}\,.
\]

\begin{lemma}\label{lemma carino}
  Let $\l\ge 1$, $s\in(0,1)$ and $\a\in(s,1)$. If $\{F_k\}_{k\in\N}$ is a sequence of Borel sets in $\R^n$ with $0\in\pa F_k$,
  \[
  \mbox{$F_k\to F$ in $L^1_{{\rm loc}}(\R^n)$ for some $F\subset\R^n$}\,,
  \]
  and, for some functions $u_k, u\in C^{1,\a}(\R^{n-1})$,
  \[
  \C\cap F_k=\Big\{x\in\C:x_n\le u_k(x')\Big\}\qquad{\mbox{and}}\qquad   \lim_{k\to\infty}\|u_k-u\|_{C^{1,\a}(\D)}=0\,,
  \]
  then
  \[
  \lim_{k\to\infty}\HH^{K_k}_{\pa F_k}(0)=\HH^K_{\pa F}(0)
  \]
  whenever $\{K_k\}_{k\in\N}$ and $K$ are kernels in $\K(n,s,\l,0)$ with $K_k\to K$ pointwise in $\R^n\setminus\{0\}$.
\end{lemma}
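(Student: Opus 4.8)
The plan is to localise the singularity at the origin. For a fixed small $\rho\in(0,1/2)$ I would split $\HH^{K_k}_{\pa F_k}(0)$ into a ``near'' contribution on $B_\rho$ and a ``far'' contribution on $\R^n\setminus B_\rho$, absorb the principal value into the near part by comparing $F_k$ with the subgraph of its tangent plane at $0$, and then pass to the limit in each part separately. Since $B_\rho\subset\C$ we have $1_{F_k}=1_{\{y_n\le u_k(y')\}}$ a.e.\ on $B_\rho$, with $u_k(0)=0$; moreover $F_k\to F$ in $L^1_{\loc}$ together with $u_k\to u$ uniformly on $\D$ force $\C\cap F=\{y_n\le u(y')\}$ up to a null set, so the same structure (with gradient $\nabla u(0)=\lim_k\nabla u_k(0)$ at the origin) holds for $F$. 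Writing $H_k=\{y_n\le\nabla u_k(0)\cdot y'\}$ and $H=\{y_n\le\nabla u(0)\cdot y'\}$, the map $y\mapsto 1_{H_k^c}(y)-1_{H_k}(y)$ is odd while $K_k$ is even, so $\int_{B_\rho\setminus B_\tau}(1_{H_k^c}-1_{H_k})\,K_k\,dy=0$ for every $\tau\in(0,\rho)$. Using $(1_{F_k^c}-1_{F_k})-(1_{H_k^c}-1_{H_k})=2(1_{H_k}-1_{F_k})$ and letting $\tau\to0^+$ one obtains
\[
\HH^{K_k}_{\pa F_k}(0)=2\int_{B_\rho}(1_{H_k}-1_{F_k})\,K_k\,dy+\int_{\R^n\setminus B_\rho}(1_{F_k^c}-1_{F_k})\,K_k\,dy,
\]
together with the identical formula for $F,H,K$ (which re-proves that the principal value defining $\HH^K_{\pa F}(0)$ converges).

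The near term is estimated by slicing in the $y_n$-direction. On $B_\rho\cap(H_k\Delta F_k)$ one has $|y|\ge|y'|$, hence $K_k(y)\le\l\,|y'|^{-n-s}$ by \eqref{kernelclass 0}, while for each fixed $y'$ the $y_n$-section of $H_k\Delta F_k$ is an interval of length $|u_k(y')-\nabla u_k(0)\cdot y'|\le C\,[u_k]_{C^{1,\a}(\D)}\,|y'|^{1+\a}$. Integrating first in $y_n$ and then in $y'$,
\[
\int_{B_\rho\cap(H_k\Delta F_k)}K_k\,dy\ \le\ C\,\l\,[u_k]_{C^{1,\a}(\D)}\int_{\{|y'|<\rho\}}|y'|^{1+\a-n-s}\,dy'\ =\ C(n,s,\a)\,\l\,[u_k]_{C^{1,\a}(\D)}\,\rho^{\a-s},
\]
where finiteness of the last integral is exactly the condition $\a>s$. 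Since $[u_k]_{C^{1,\a}(\D)}\to[u]_{C^{1,\a}(\D)}$, this bound is uniform in $k$ and tends to $0$ as $\rho\to0^+$; the same estimate holds with $H\Delta F$ in place of $H_k\Delta F_k$.

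With this equi-smallness in hand, convergence of the near term follows by a standard approximation argument: given $\eta>0$, pick $r\in(0,\rho)$ so that $\int_{B_r\cap(H_k\Delta F_k)}K_k<\eta$ for all $k$ and $\int_{B_r\cap(H\Delta F)}K<\eta$; since $1_{H_k}-1_{F_k}$ is supported in $H_k\Delta F_k$, the $B_r$-pieces of both integrals are controlled by $\eta$, while on the annulus $B_\rho\setminus B_r$ the kernels are bounded, $1_{F_k}\to1_F$ and $1_{H_k}\to1_H$ in $L^1$, and $K_k\to K$ pointwise, so dominated convergence applies. Hence $\int_{B_\rho}(1_{H_k}-1_{F_k})K_k\to\int_{B_\rho}(1_H-1_F)K$. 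For the far term I would split $\R^n\setminus B_\rho$ into $B_R\setminus B_\rho$ (kernels bounded there, so $1_{F_k}\to1_F$ in $L^1$ and $K_k\to K$ pointwise suffice) and $\R^n\setminus B_R$ (where $|(1_{F_k^c}-1_{F_k})K_k|\le\l\,|y|^{-n-s}$ has integral $\le C(n,s)\,\l\,R^{-s}$, arbitrarily small for $R$ large, uniformly in $k$, and likewise for the limit). This gives convergence of the far term, and adding the two limits yields $\HH^{K_k}_{\pa F_k}(0)\to\HH^K_{\pa F}(0)$.

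The only genuinely delicate point is the near-origin estimate above: one must exploit the $C^{1,\a}$ graph structure with $\a>s$ to convert the principal value into an absolutely convergent integral \emph{with a modulus of smallness uniform in $k$}. Everything else is a routine combination of dominated convergence and $L^1_{\loc}$-convergence; indeed this is essentially the same mechanism underlying the continuity of the nonlocal mean curvature on ${\rm Reg}_E$ recalled after \eqref{nonlocal mean curvature}.
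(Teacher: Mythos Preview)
Your proof is correct and follows essentially the same route as the paper. Both arguments hinge on the same observation: the $C^{1,\a}$ graph structure with $\a>s$ confines the support of $(1_{F_k^c}-1_{F_k})$, modulo the odd tangent-plane cancellation, to a cusp-shaped region on which $|y|^{-n-s}$ is integrable, after which dominated convergence finishes the job. The paper packages this slightly differently: it normalises so that $\nabla u_k(0)=\nabla u(0)=0$, defines a single cusp region $P_{\e,\g}=\{|x_n|<\g|x'|^{1+\a}\}\cap B_\e$ independent of $k$, notes $|z|^{-n-s}\in L^1(P_{\e,\g}\cup B_\e^c)$, and applies dominated convergence directly to $\int_{P_{\e,\g}\cup B_\e^c}(1_{F_k^c}-1_{F_k})K_k$. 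Your version is a touch more careful in that you allow the tangent half-spaces $H_k$ to vary with $k$ rather than relying on the normalisation $\nabla u_k(0)=0$ (which, strictly speaking, cannot be achieved for all $k$ by a single rigid motion), and you make the $\rho^{\a-s}$ decay of the cusp integral explicit; but the underlying mechanism is identical.
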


\begin{proof}
  Up to rigid motions we may assume without loss of generality that $0\in\pa F$ (so that $u(0)=u_k(0)=0$) and that $\nabla u_k(0)=\nabla u(0)=0$. Since $u\in C^{1,\a}(\D)$ and $u_k\to u$ in $C^{1,\a}(\D)$ we can find $\g>0$ such that
  \[
  \max\{|u_k(z)|,|u(z)|\}\le\g|z|^{1+\a}\qquad\forall z\in\D\,,k\in\N\,.
  \]
  If we let
  \[
  P_{\e,\g}=\big\{x\in B_\e:|x_n|<\g\,|x'|^{1+\a}\big\}\qquad\e\in(0,1)\,,
  \]
  then $|z|^{-n-s}\in L^1(P_{\e,\g}\cup (B_\e)^c)$ and thus
  \begin{eqnarray*}
  \HH^{K_k}_{\pa F_k}(0)=\int_{(B_\e)^c\cup P_{\e,\g}}(1_{F_k}^c-1_{F_k})\,K_k
  \qquad
  \HH^{K}_{\pa F}(0)=\int_{(B_\e)^c\cup P_{\e,\g}}(1_{F}^c-1_{F})\,K\,.
  \end{eqnarray*}
Since $(1_{F_k}^c-1_{F_k})\to (1_{F}^c-1_{F})$ a.e. on $\R^n$ we conclude by dominated convergence that $\HH^{K_k}_{\pa F_k}(0)\to \HH^K_{\pa F}(0)$.
\end{proof}

\begin{proof}[Proof of Theorem \ref{thm nonlocal young}] {\it Step one}: We start proving the validity of \eqref{young law nonlocal general}. Let us fix $x_0\in\pa\Om\cap{\rm Reg}_E$ so that $x_0$ is a boundary point of the manifold with boundary $B_{{{\varrho}}}(x_0)\cap\pa E$. Consider a sequence $\{x_k\}_{k\in\N}\subset\Om\cap{\rm Reg}_E$ such that $x_k\to x_0$, and set
\[
r_k=|x_k-x_0|\qquad v_k=\frac{x_k-x_0}{r_k}\qquad E^{x_0,r_k}=\frac{E-x_0}{r_k}\qquad \Om^{x_0,r_k}=\frac{\Om-x_0}{r_k}\,.
\]
We recall that, by \eqref{stationary set},
\begin{equation}
  \label{recall that}
  \HH^K_{\pa E}(x_k)-(1-\s)\,\int_{\Om^c}K(x_k-y)\,dy+g(x_k)=c
\end{equation}
for a constant $c$ independent of $k$. We have that
\[
\mbox{$\Om^{x_0,r_k}\to H$ and $E^{x_0,r_k}\to V\cap H$ in $L^1_{{\rm loc}}(\R^n)$}
\]
where $H$ and $V$ are suitable half-spaces in $\R^n$ so that
\[
\nu_\Om(x_0)=\nu_H(0)\qquad \nu_V(0)=\lim_{k\to\infty}\nu_E(x_k)=:\nu_V(0)\,.
\]
Up to extracting subsequences, we have that $v_k\to v$ for some $v\in S^{n-1}$. We can use the change of variables $y=x_0+r_k\,z$ to find
\begin{eqnarray}\label{curvature blow ups}
\HH^{K}_{\pa E}(x_k)&=&\int_{\R^n}K(x_k-y)\,
\big( 1_{E^c}(y)-1_E(y)\big)\,dy
\\
&=&r_k^{-s}\int_{\R^n}r_k^{n+s}\,K(x_k-x_0-r_k\,z)\,
\big( 1_{(E^{x_0,r_k})^c}(z)-1_{E^{x_0,r_k}}(z)\big)\,dz
\end{eqnarray}
Now, since $\{x_k\}_{k\in\N}\subset\Om\cap{\rm Reg}_E$, we can find rigid motions $Q_k:\R^n\to\R^n$ and functions $u_k\in C^{1,\a}(\R^{n-1})$ such that if we set
\[
F_k=Q_k(E^{x_0,r_k}-v_k)
\]
then $0\in\pa F_k$ and
\[
\C\cap F_k=\Big\{x\in\C:x_n\le u_k(x')\Big\}\,.
\]
Notice that
\[
\mbox{$F_k\to F=H\cap V$ in $L^1_{{\rm loc}}(\R^n)$}
\]
with $u_k\to u$ in $C^{1,\a}(\D)$ for a linear function $u:\R^{n-1}\to\R$. If we set
\[
K_k(\zeta)=r_k^{n-s}\,K(r_k\,\zeta)\qquad\zeta\in\R^n\setminus\{0\}\,,
\]
then by \eqref{curvature blow ups} we get
\[
\HH^{K}_{\pa E}(x_k)=r_k^{-s}\,\HH^{K_k}_{\pa F_k}(0)\,.
\]
Since $K_k\to K^*$ pointwise in $\R^n\setminus\{0\}$, by Lemma \ref{lemma carino} we find
\[
\lim_{k\to\infty}r_k^s\,\HH^{K}_{\pa E}(x_k)=\HH^{K^*}_{\pa (H\cap V)}(v)\,,
\]
and since $r_k^s\,g(x_k)\to 0$ (indeed $x_k\to x_0$ and $g$ is locally bounded), \eqref{recall that} implies
\[
\HH^{K^*}_{\pa (H\cap V)}(v)-(1-\s)\,\lim_{k\to\infty}r_k^s\,\int_{\Om^c}K(x_k-y)\,dy=0\,.
\]
By the change of variable $y=x_0+r_k\,z$,
\[
\int_{\Om^c}K(x_k-y)\,dy=r_k^{-s}\,\int_{(\Om^{x_0,r_k})^c}\,r_k^{n+s}\,K\big(r_k(v_k-z)\big)\,dz
\]
where
\[
\lim_{k\to\infty}\int_{(\Om^{x_0,r_k})^c}\,r_k^{n+s}\,K\big(r_k(v_k-z)\big)\,dz=\int_{H^c}\,K^*(v-z)\,dz\,.
\]
We have thus proved that
\begin{equation}\label{trieste}
\HH^{K^*}_{\pa (H\cap V)}(v)-(1-\s)\,\int_{H^c}\,K^*(v-z)\,dz=0\,,\qquad\forall v\in H\cap\pa V\,,
\end{equation}
that is \eqref{young law nonlocal general}.

\medskip

\noindent {\it Step two}: We now assume that $K=K^\e_s$ for some $\e>0$, so that $K^*=K_s$. Up to a rigid motion we can assume that $H$ and $V$ satisfy
\begin{eqnarray*}
H&=&\{x\in\R^n:x_n>0\}
\\
H\cap V&=&\Big\{x\in \R^n:\mbox{$x_n>0$ and $\cos\a\,x_n=\sin\a\,x_1$ for some $\a\in(0,\theta)$}\Big\}=:J_\theta\,,
\end{eqnarray*}
for some $\theta\in(0,\pi)$. Since \eqref{trieste} is $-s$ homogeneous in $|v|$, we find that \eqref{trieste} is equivalent to
\begin{equation}\label{aJAH:8A}
\int_{\R^n}\frac{(1_{J_\theta^c\cap H}+
\s\,1_{H^c}-1_{J_\theta})(z)}{|e(\theta)-z|^{n+s}}\,dz=0
\end{equation}
where
\[
e(\theta)=\cos\,\theta\,e_1+\sin\theta\,e_n\,.
\]
In this step we show that there exists a unique $\theta=\theta(n,s,\s)\in(0,\pi)$ such that \eqref{aJAH:8A} holds -- so that, correspondingly,
\[
\nu_E(x_0)\cdot\nu_{\Om}(x_0)=\nu_V(0)\cdot\nu_H(0)=\cos\big(\pi-\theta(n,s,\s)\big)
\]
and \eqref{young law fractional} holds -- and that the function $\s\in(-1,1)\mapsto\theta(n,s,\s)$ is strictly increasing with
\begin{equation}\label{theta limits}
\theta(n,s,0)=\frac\pi2\,,\qquad\lim_{\s\to (-1)^+}\theta(n,s,\s)=0\,,\qquad\lim_{\s\to 1^-}\theta(n,s,\s)=\pi\,.
\end{equation}

We first notice that we do not need to specify the integral in~\eqref{aJAH:8A} in the principal value sense as there always is a ball centered at $e(\theta)$ with one half of it contained in $J_\theta$, the other half contained in $J_\theta^c\cap H$. It is also geometrically evident (see Figure \ref{fig piani}) that the choice $\s=0$, $\theta=\pi/2$ solves \eqref{aJAH:8A} and that if a pair $(\s,\theta)$ satisfies \eqref{aJAH:8A} then (i) $\theta\in(0,\pi/2)$ if and only if $\s\in(-1,0)$; (ii) $\theta\in(\pi/2,\pi)$ if and only if $\s\in(0,1)$; (iii) if $\theta\in[\pi/2,\,\pi)$, then $(-\s,\pi-\theta)$ also solves \eqref{aJAH:8A}.

We are thus left to show that $\s\in(-1,0)$ there exists a unique $\theta\in(0,\pi/2)$ (also depending on $n$ and $s$) such that  \eqref{aJAH:8A} holds, and that the correspondence $\s\in(-1,0)\mapsto\theta(n,s,\s)$ is strictly increasing and satisfies $\theta(n,s,(-1)^+)=0$. To prove this, let us notice that having restricted $\s\in(-1,0)$, we can directly consider \eqref{aJAH:8A} with $\theta\in(0,\pi/2)$. Since in this case the reflection of $J_\theta$ with respect to the hyperplane containing $H\cap\pa J_\theta$ is entirely contained in $J_\theta^c\cap H$, \eqref{aJAH:8A} turns out to the be equivalent to
\begin{equation}\label{def of theta}
\int_{\R^n}\frac{(1_{L_\theta}+\s\,1_{H^c})(z)}{|e(\theta)-z|^{n+s}}\,dz=0
\end{equation}
where $L_\theta$ is equal to $H$ minus the union of $J_\theta$ with its reflection with respect to  the hyperplane containing $H\cap\pa J_\theta$. With
\begin{figure}
  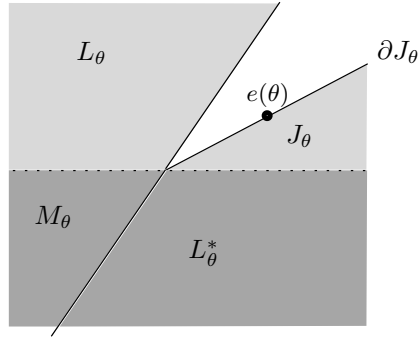\caption{{\small The cones $J_\theta$, $L_\theta$ and $M_\theta$ when $\theta\in(0,\pi/2)$.}}\label{fig piani}
\end{figure}
Figure \ref{fig piani} in mind, now let $L^*_\theta$ be the
reflection of $L_\theta$ with respect to the hyperplane containing $H\cap\pa J_\theta$,
so that $L^*_\theta$ is contained in $H^c$, and let $M_\theta=H^c\cap (L^*_\theta)^c$. As $H^c=L^*_\theta\cup M_\theta$ with $L^*_\theta\cap M_\theta=\varnothing$ and since $L^*_\theta$ is mapped into $L_\theta$ by an isometry keeping the distance from $e(\theta)$ invariant, we get
\begin{equation}
  \label{star}
  \int_{\R^n}\frac{(1_{L_\theta}+\s\,1_{H^c})(z)}{|e(\theta)-z|^{n+s}}\,dz=(1+\s)\,
\int_{L_\theta}\frac{dz}{|e(\theta)-z|^{n+s}}+
\s\,\int_{M_\theta}\frac{dz}{|e(\theta)-z|^{n+s}}
\end{equation}
and thus, by \eqref{def of theta}, we conclude that \eqref{aJAH:8A} holds for some $\theta\in(0,\pi/2)$ if and only if
\begin{equation}\label{equation:LIM}
\int_{M_\theta}\frac{dz}{|e(\theta)-z|^{n+s}}=-\Big(1+\frac1{\s}\Big)\,\int_{L_\theta}\frac{dz}{|e(\theta)-z|^{n+s}}\,.
\end{equation}
Let us set
\[
a(\theta)=\int_{M_\theta}\frac{dz}{|e(\theta)-z|^{n+s}}\qquad b(\theta)=\int_{L_\theta}\frac{dz}{|e(\theta)-z|^{n+s}}\,.
\]
Clearly $a(\theta)$ is strictly increasing on $(0,\pi/2)$, with $a(0)=0$ and $a(\pi/2)<\infty$: indeed
\[
a(\theta)=\int_{U_\theta}\frac{dz}{|z-e_n|^{n+s}}\qquad U_\theta=\Big\{x\in\R^n:x_n<0\,,|x_1|<|x_n|\,\tan\theta\Big\}\,,
\]
where the latter function is trivially increasing as $|U_{\theta_2}\setminus U_{\theta_1}|>0$ whenever $0<\theta_1<\theta_2<\pi/2$. At the same time
$b(\theta)$ is strictly decreasing with $b(0^+)=+\infty$ and $b((\pi/2)^-)=0^+$. This is seen as while $\theta$ increases from $0$ to $\pi/2$, the region $L_\theta$ is strictly decreasing from $H$ to the empty set, while the distance between the singularity $e(\theta)$ and $L_\theta$ is strictly increasing. In conclusion
\[
\theta\in\Big(0,\frac{\pi}2\Big)\mapsto \frac{a(\theta)}{b(\theta)}
\]
is a strictly increasing function on $(0,\pi/2)$ with limit $0$ as $\theta\to 0^+$ and limit $+\infty$ as $\theta\to(\pi/2)^-$. Moreover,
\[
\s\in(-1,0)\mapsto-\Big(1+\frac1{\s}\Big)
\]
is a strictly increasing function on $(-1,0)$ with limit $0$ as $\s\to(-1)^+$ and limit $+\infty$ as $\s\to 0^-$. In conclusion, for every $\s\in(-1,0)$ there exists a unique $\theta=\theta(n,s,\s)\in(0,\pi/2)$ such that \eqref{def of theta} holds. The resulting map $\s\in(-1,0)\mapsto\theta(n,s,\s)$ is strictly increasing and satisfies the first two properties in \eqref{theta limits}. This completes the proof of step two.

\medskip

\noindent {\it Step three}: We conclude the proof of the theorem by showing that $\theta(n,s,\s)=\theta(s,\s)$ with
\[
\lim_{s\to 1^-}\cos(\pi-\theta(s,\s))=\s\,,\qquad\forall\s\in(-1,1)\,.
\]
To this end, let us first go back to \eqref{def of theta}, and notice that
\[
(1_{L_\theta}+\s\,1_{H^c})(z)=f(z_1,z_n)
\]
so that if $n\ge 3$, then \eqref{def of theta} takes the form
\begin{equation}
  \label{def of theta 2}
  \int_{\R}dz_1\int_\R f(z_1,z_n)\,dz_n\int_{\R^{n-2}}\frac{dw}{(\ell^2+|w|^2)^{(n+s)/2}}=0\qquad
\end{equation}
where we have set
\[
\ell(z_1,z_n)=\sqrt{(z_1-\cos\theta)^2+(z_n-\sin\theta)^2}\,.
\]
Now, in polar coordinates,
\[
\int_{\R^{n-2}}\frac{dw}{(\ell^2+|w|^2)^{(n+s)/2}}=(n-2)\,\om_{n-2}\,\int_0^\infty\frac{r^{n-3}\,dr}{(\ell^2+r^2)^{(n+s)/2}}
\]
where, by scaling,
\[
\int_0^\infty\frac{r^{n-3}\,dr}{(\ell^2+r^2)^{(n+s)/2}}=\frac{C(n,s)}{\ell^{2+s}}\,.
\]
%\[
%=\frac{1}{2\,\ell^{2+s}}\,\beta\Big(\frac{n-2}2,\frac{s+2}2\Big)\,.
%\]
%
%
%denoting Euler beta function by $\beta$,
%\[
%\int_0^\infty\frac{r^{n-3}\,dr}{(\ell^2+r^2)^{(n+s)/2}}=\frac{1}{2\,\ell^{2+s}}\,\beta\Big(\frac{n-2}2,\frac{s+2}2\Big)\,.
%\]
By taking \eqref{def of theta 2} into account, the definition \eqref{def of theta} of $\theta$ boils down to
\[
\int_{\R}dz_1\int_\R \frac{f(z_1,z_n)}{\ell^{2+s}}\,dz_n=0\,,
\]
which is actually equivalent to \eqref{def of theta} in the case $n=2$. This proves that $\theta(n,s,\s)=\theta(2,s,\s)$ for every $n\ge 3$. We thus plainly set $\theta=\theta(s,\s)$ and then turn to the proof of $\cos(\pi-\theta(s,\s))\to \s$ as $s\to 1^-$.

By exploiting the symmetries of $\theta(s,\s)$ in $\s$, it suffices to consider the case when $\s\in(-1,0)$ (and thus $\theta\in(0,\pi/2)$). It is then convenient to rewrite \eqref{star} by using $\int_{L_\theta}=\int_{L_\theta\cup M_\theta}-\int_{M_\theta}$, to find that
\[
1+\s=\frac{\int_{M_\theta}|z-e(\theta)|^{-(2+s)}dz}{\int_{L_\theta\cup M_\theta}|z-e(\theta)|^{-(2+s)}dz}\,.
\]
Notice that $L_\theta\cup M_\theta$ is an half-plane lying at distance $\sin\theta$ from $e(\theta)$. Hence,
\begin{eqnarray*}
\int_{L_\theta\cup M_\theta}\frac{dz}{|z-e(\theta)|^{2+s}}=\int_{\{y_2<0\}}\frac{dy}{|y-\sin\theta e_2|^{2+s}}
=\frac1{(\sin\theta)^s}\,\int_{\{x_2<0\}}\frac{dx}{|x-e_2|^{2+s}}\,.
\end{eqnarray*}
At the same time, by a counter-clockwise rotation around the origin of angle $(\pi/2)-\theta$, which thus maps $e(\theta)=\cos\theta e_1+\sin\theta\,e_2$ into $e_2$, we find
\[
\int_{M_\theta}\frac{dz}{|z-e(\theta)|^{2+s}}=\int_{\Gamma_\theta}\frac{dx}{|x-e_2|^{2+s}}
\]
where we have set
\[
\Gamma_\theta=\Big\{w\in\R^2:x_2<0\,,-\theta<\arctan\Big(-\frac{x_1}{x_2}\Big)<\theta\Big\}\,,
\]
see
\begin{figure}
  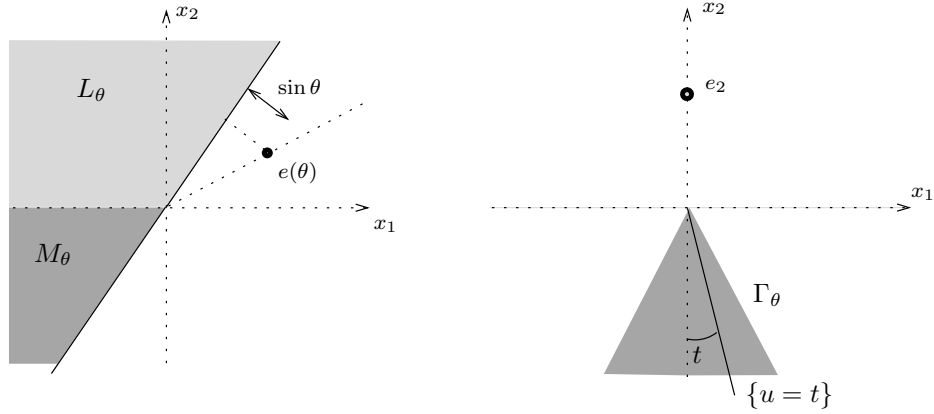\caption{{\small Notation used in computing the limit of $\theta(s,\s)$ as $s\to 1^-$.}}\label{fig piani2}
\end{figure}
Figure \ref{fig piani2}. Putting everything together we find that $\theta=\theta(s,\s)$ satisfies
\begin{eqnarray}
  \label{ictp}
  \frac{(1+\s)}{(\sin\theta)^s}\int_{\Gamma_{\pi/2}}\frac{dx}{|x-e_2|^{2+s}}=\int_{\Gamma_\theta}\frac{dx}{|x-e_2|^{2+s}}\,,
\end{eqnarray}
(indeed $\Gamma_{\pi/2}=\{x_2<0\}$). We now consider the function $u:\{x_2<0\}\to(-\pi/2,\pi/2)$ defined by
\[
u(x)=\arctan\Big(-\frac{x_1}{x_2}\Big)
\]
and notice that $u(x)$ is a locally Lipschitz on $\{x_2<0\}$ with $\Gamma_\theta=\{-\theta<u<\theta\}$ and
\[
|\nabla u|=\frac1{|x|}\,.
\]
By the Coarea formula for every Borel function $g:\{x_2<0\}\to[0,\infty]$ we have
\[
\int_{\{x_2<0\}}g(x)\,|\nabla u(x)|\,dx=\int_{-\pi/2}^{\pi/2}dt\int_{\{u=t\}}g(x)\,d\H^{n-1}(x)
\]
so that, by choosing
\[
g(x)=\frac{1_{\Gamma_\theta}(x)}{|\nabla u(x)|\,|x-e_2|^{2+s}}
\]
we get
\[
\int_{\Gamma_\theta}\frac{dx}{|x-e_2|^{2+s}}=\int_{-\theta}^\theta\,dt\int_{\{u=t\}}\frac{|x|}{|x-e_2|^{2+s}}\,d\H^1_x
=2\int_0^\theta\,dt\int_{\{u=t\}}\frac{|x|}{|x-e_2|^{2+s}}\,d\H^1_x\,.
\]
Now, if $t\in(0,\pi/2)$, then $\{u=t\}$ is the half-line $\{x\in\R^2:x_1>0\,, x_2=-(\tan t)\,x_1\}$ so that
\[
x_1=|x|\,\sin t\qquad x_2=-|x|\cos t\qquad\forall x\in\{u=t\}\,.
\]
Hence, setting $|x|=r$ we find
\[
\int_{\{u=t\}}\frac{|x|}{|x-e_2|^{2+s}}\,d\H^1_x=
\int_0^\infty\,\frac{r\,dr}{(r^2+2r\cos t+1)^{(2+s)/2}}\,.
\]
By dominated convergence
\begin{eqnarray*}
\lim_{s\to 1^-}\int_0^\theta\,dt\int_0^\infty\,\frac{r\,dr}{(r^2+2r\cos(t)+1)^{(2+s)/2}}&=&
\int_0^\theta\,dt\int_0^\infty\,\frac{r\,dr}{(r^2+2r\cos t+1)^{3/2}}
\\
&=&\int_0^\theta\frac{dt}{1+\cos t}=\frac{\sin\theta}{1+\cos\theta}\,,
\end{eqnarray*}
where we have used
\[
\int\,\frac{r\,dr}{(r^2+2r\cos t+1)^{3/2}}=-\frac1{\sin^2t}\frac{1+r\,\cos t}{\sqrt{1+2 r\cos t+r^2}}+{\rm const.}
\]
In summary, by taking the limit as $s\to 1^-$ in \eqref{ictp} we find
\[
\frac{1+\s}{\sin(\theta(1,\s))}=\frac{\sin(\theta(1,\s))}{1+\cos(\theta(1,\s))}\,,
\]
which gives $\s=-\cos(\theta(1,\s))=\cos(\pi-\theta(1,\s))$. This completes the proof of Theorem \ref{thm nonlocal young}.
\end{proof}

\section{Almost-minimality and interior regularity}\label{section interior regularity} In this section we gather some simple basic
properties of the almost-minimizers introduced in Definition \ref{DFAL}, show that minimizers in \eqref{variational problem} are almost-minimizers, and then check the interior regularity theory from \cite{caputoguillen} applies in our case. Let us recall that given $K\in\K(n,s,\l)$, open sets $\Om$ and $A$ with
  \begin{equation}
    \label{condition Omega A section}
      I(\Om A,\Om^c)<\infty\,,
  \end{equation}
and $\Lambda\in[0,\infty)$, $r_0\in(0,\infty]$ and $\s\in(-1,1)$, we say that $E\subset\Om$ is {\it $(\Lambda,r_0,\s,K)$-minimizer in $(A,\Omega)$} if
\begin{eqnarray}
    \label{Lambda r0 minimizer section}
  &&I(E A,E^c \Om)+I(E A^c,E^c\Om A)+
\s\,I(E A,\Om^c)
  \\\nonumber
  &\le&
  I(F A,F^c \Om)+
I(F A^c,F^c \Om  A)+\s\,I(F A,\Om^c)+\Lambda\,|E\Delta F|\,,
  \end{eqnarray}
whenever $F\subset\Om$,
$\diam(F\Delta E)<2\,r_0$ and $F\cap A^c=E\cap A^c$. Thanks to \eqref{condition Omega A}, $I(F A,\Om^c)<\infty$ whenever $F\subset\Om$, and in particular the right hand side of \eqref{Lambda r0 minimizer section} is always well definite in $(-\infty,\infty]$. We begin with two simple remarks.

\begin{remark}[Almost-minimality and blow-ups]\label{remark blowups}
  {\rm Let us recall our notation $A^{x,r}=(A-x)/r$ for the blow-up of $A\subset\R^n$ near $x\in\R^n$ at scale $r>0$. It is easily seen that for every $x\in\R^n$ and $r>0$ one has that $E$ is a $(\Lambda,r_0,\s,K)$-minimizer in $(A,\Omega)$ if and only if
  \[
  \mbox{$E^{x,r}$ is a $(r^s\Lambda,r_0/r,\s,r^{n+s}\,K(r\,\cdot))$-minimizer in $(A^{x,r},\Omega^{x,r})$}\,.
  \]
  In particular, should $E^{x,r}$ converge to a limit set $E^*$ as $r\to 0^+$ (for some $x\in A$ fixed), then one expects $E^*$ to be a $(0,\infty,\s,K^*)$-minimizer in $(B_R,H)$ for every $R>0$, with $H=\R^n$ if $x\in A\cap\Om$, and with $H=\{z:z\cdot \nu_\Om(x)<0\}$ if $x\in A\cap\pa\Om$ and $\Om$ is an open set of class $C^1$. Here $K^*$ is defined as in \eqref{homogeneous kernel}.}
\end{remark}

\begin{remark}[Almost-minimality and complement]\label{remark complement}
  {\rm One notices that $E$ is a $(\Lambda,r_0,\s,K)$-minimizer in $(A,\Omega)$ if and only if
  \[
  \mbox{$\Om\cap E^c$ is a $(\Lambda,r_0,-\s,K)$-minimizer in $(A,\Omega)$.}
  \]
This can be easily checked by noticing that, for any set~$E\subset\Omega$,
\begin{eqnarray*}
I(\Omega E^c A,(\Omega E^c)^c\Omega)
&=& I(\Omega E^c A,E)
\\
&=& I(E A,E^c \Omega A)+I(E A^c,E^c\Omega A)\,,
\\
I(\Omega E^c A^c,(\Omega E^c)^c\Omega A)
&=&I(E A,E^c\Omega A^c)\,,
\\
\s\,I (\Omega E^c A,\Omega^c)&=&-\s\,I (E A, \Omega^c) +\s\, I (\Omega A, \Omega^c)\,.
\end{eqnarray*}
  }
\end{remark}

Let us recall the definition of (nonlocal) relative perimeter of $E$ in an open set $A$,
\[
P(E;A)=I(EA,E^cA)+I(EA,E^cA^c)+I(EA^c,E^cA)\,.
\]

\begin{proposition}
  If $K\in\K(n,s,\l)$ and $E$ is a $(\Lambda,r_0,\s,K)$-minimizer in $(A,\Omega)$ and $x_0$ and ${{\varrho}}_0$ are such that $B_{2\,{{\varrho}}_0}(x_0)\cc\Om\cap A$ with ${{\varrho}}_0\le r_0$, then
  \begin{equation}
    \label{almost min caputo}
    P(E;B_{{{\varrho}}_0}(x_0))\le P(F;B_{{{\varrho}}_0}(x_0))+C\,\frac{|E\Delta F|}{{{\varrho}}_0^s}
  \end{equation}
  for every set $F$ such that $E\Delta F\cc B_{{{\varrho}}_0}(x_0)$, where $C$ depends on $\Lambda$, $\l$, $n$ and $s$.
\end{proposition}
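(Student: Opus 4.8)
I would substitute the competitor $F$ into the minimality inequality \eqref{Lambda r0 minimizer section} and expand every interaction term according to whether its arguments meet the ball $B:=B_{\varrho_0}(x_0)$. Because $B\cc\Om\cap A$ one has $\Om^c\subset B^c$, $A^c\subset B^c$, and $G\cap B=G\cap A\cap B=G\cap\Om\cap B$ for every $G$, so each of the three interactions on either side of \eqref{Lambda r0 minimizer section} splits into pieces whose two arguments lie separately in $B$ or in $B^c$. The resulting bookkeeping should give, for any $G\subset\Om$,
\[
I(G\cap A,G^c\cap\Om)+I(G\cap A^c,G^c\cap\Om\cap A)+\s\,I(G\cap A,\Om^c)=P(G;B)-(1-\s)\,I(G\cap B,\Om^c)+R(G),
\]
where the $B$--$B$ and $B$--$B^c$ pieces reassemble \emph{exactly} into the relative perimeter $P(G;B)$, the only leftover interaction of the form $I(G\cap B,\cdot)$ is the one against $\Om^c$ (its coefficient $-(1-\s)$ arising as $-1$ from the first summand and $+\s$ from the third), and $R(G)$ collects interactions each of whose arguments involves $G$ only through $G\cap B^c$ or through $G^c\cap B^c$.

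Next I would exploit that $E\Delta F\cc B$ forces $E\cap B^c=F\cap B^c$ and $E^c\cap B^c=F^c\cap B^c$, so the remainder is common: $R(E)=R(F)$. Moreover $G\cap B\subset\Om\cap A$ gives $I(G\cap B,\Om^c)\le I(\Om\cap A,\Om^c)<\infty$ by \eqref{condition Omega A section}, so the $E/F$-dependent correction is finite. Assuming, as I may, that $P(F;B)<\infty$ (otherwise there is nothing to prove) — and that $I(E,E^c\cap\Om)<\infty$, which holds for minimizers of \eqref{variational problem} by Proposition~\ref{lemma esistenza} and renders $R$ finite — the identity turns \eqref{Lambda r0 minimizer section} into $P(E;B)-(1-\s)\,I(E\cap B,\Om^c)\le P(F;B)-(1-\s)\,I(F\cap B,\Om^c)+\Lambda\,|E\Delta F|$. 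Since $(E\cap B)\Delta(F\cap B)=E\Delta F$ and the kernel is nonnegative, $|I(E\cap B,\Om^c)-I(F\cap B,\Om^c)|\le I(E\Delta F,\Om^c)$, and rearranging yields $P(E;B)\le P(F;B)+(1-\s)\,I(E\Delta F,\Om^c)+\Lambda\,|E\Delta F|$.

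It then remains to estimate the tail $I(E\Delta F,\Om^c)$. Because $B_{2\varrho_0}(x_0)\subset\Om$, every $y\in\Om^c$ satisfies $|x_0-y|\ge 2\varrho_0$, hence $\dist(x,\Om^c)\ge\varrho_0$ for all $x\in E\Delta F\subset B_{\varrho_0}(x_0)$; with the upper bound in \eqref{kernelclass 0} this gives $\int_{\Om^c}K(x-y)\,dy\le\l\int_{\{|z|\ge\varrho_0\}}|z|^{-n-s}\,dz=C(n,s)\,\l\,\varrho_0^{-s}$, so $I(E\Delta F,\Om^c)\le C(n,s,\l)\,\varrho_0^{-s}\,|E\Delta F|$. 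Since $1-\s<2$ and $\Lambda\,|E\Delta F|=\Lambda\varrho_0^{s}\cdot\varrho_0^{-s}|E\Delta F|$, normalizing $\varrho_0\le 1$ (a harmless reduction, or else letting $C$ depend also on $r_0$) gives \eqref{almost min caputo} with $C=C(n,s,\l)+\Lambda$.

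The one genuinely delicate step is the displayed algebraic identity: I must keep track of the eight interaction pieces produced by the $B/B^c$ splitting, check that the pieces reorganize into $P(G;B)$, a single correction $-(1-\s)I(G\cap B,\Om^c)$, and a remainder $R(G)$, and — crucially — verify that $R(G)$ depends on $G$ only through $G\cap B^c$ and $G^c\cap B^c$, so that it cancels between $E$ and $F$ (it need not be finite, but it is common to the two sides), while the one $E/F$-sensitive term beyond $P(G;B)$ is the single, manifestly finite interaction $I(G\cap B,\Om^c)$. Once this reorganization is in place, the distance estimate and the final arithmetic are entirely routine.
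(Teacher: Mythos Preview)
Your argument is correct and follows essentially the same route as the paper: plug $F$ into \eqref{Lambda r0 minimizer section}, split every interaction according to $B_{\varrho_0}(x_0)$ versus its complement, cancel the common $B^c$--$B^c$ pieces (your $R(G)$, the paper's $\kappa$), and estimate the remaining discrepancy via the tail bound $\int_{\Om^c}K(x-y)\,dy\le C(n,s,\l)\varrho_0^{-s}$ for $x\in E\Delta F$. The only cosmetic difference is that the paper first isolates and estimates the $\s$-term separately before decomposing, whereas you carry it through the decomposition and obtain the single correction $-(1-\s)I(G\cap B,\Om^c)$; both lead to the same inequality, and your caveat about the finiteness of $R$ (and the normalization $\varrho_0\le 1$ for the $\Lambda$-term) flags exactly the technical points the paper leaves implicit.
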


\begin{proof}
  Since ${{\varrho}}_0\le r_0$ we can plug any $F$ such that $E\Delta F\cc B_{{{\varrho}}_0}(x_0)$ into \eqref{Lambda r0 minimizer section}, and then deduce
  \begin{eqnarray*}
    I(E A,E^c \Om)+I(E A^c,E^c\Om A)&\le&I(F A,F^c \Om)+I(F A^c,F^c \Om  A)
  \\
  &&+|I(F A,\Om^c)-I(E A,\Om^c)|+\Lambda\,|E\Delta F|\,,
  \end{eqnarray*}
  where $K\in\K(n,s,\l)$ gives
  \begin{eqnarray*}
    |I(F A,\Om^c)-I(E A,\Om^c)|&\le&\l\int_{\Om^c}dy\,\int_{(E\Delta F)\cap B_{{{\varrho}}_0}(x_0)}\frac{dx}{|x-y|^{n+s}}
    \\
    &\le&\l\,|E\Delta F|\,\int_{B_{2{{\varrho}}_0}(x_0)^c}\frac{dy}{\dist(y,B_{{{\varrho}}_0}(x_0))^{n+s}}
    \\
    &\le&\frac{\l}{{{\varrho}}_0^s}\,n\om_n\,\int_2^\infty\frac{t^{n-1}\,dt}{(t-1)^{n+s}}|E\Delta F|
    \le C\,\frac{|E\Delta F|}{{{\varrho}}_0^s}\,.
  \end{eqnarray*}
  We thus have
  \begin{eqnarray}\label{bri}
    I(E A,E^c \Om)+I(E A^c,E^c\Om A)\le I(F A,F^c \Om)+I(F A^c,F^c \Om  A)
  +C\,\frac{|E\Delta F|}{{{\varrho}}_0^s}\,.
  \end{eqnarray}
  Let us now set $W=B_{{{\varrho}}_0}(x_0)$ for the sake of brevity. Since $W\cc\Om\cap A$ we have
  \begin{eqnarray*}
    I(E A,E^c \Om)+I(E A^c,E^c\Om A)&=&I(E W,E^cW)+I(E W,E^cW^c\Om)
    \\
    &&+I(E A W^c,E^cW)+I(E A W^c,E^cW^c\Om)
    \\
    &&+I(EA^c,E^cW)+  I(EA^c,E^c\Om A W^c)
  \end{eqnarray*}
  where $E\Delta F\cc W\cc A$ implies that by replacing $E$ with $F$ we leave unchanged both the fourth and sixth interaction terms. We denote by $\k$ their sum, so that $\k(E)=\k(F)$, and rewrite the above identity as
  \begin{eqnarray*}
    I(E A,E^c \Om)+I(E A^c,E^c\Om A)&=&I(E W,E^cW)+I(E W,E^cW^c\Om)
    \\
    &&+I(E A W^c,E^cW)+I(EA^c,E^cW)+ \k
    \\
    &=&I(E W,E^cW)+I(E W,E^cW^c\Om)+I(E W^c,E^cW)+\k
    \\
    &=&P(E; W)-I(E W,E^c\Om^c)+\k\,.
  \end{eqnarray*}
  Hence \eqref{bri} is equivalent to
  \begin{eqnarray}\label{bri2}
    P(E;W)\le P(F;W)+I(E W,E^c\Om^c)-I(F W,F^c\Om^c)
  +C\,\frac{|E\Delta F|}{{{\varrho}}_0^s}\,.
  \end{eqnarray}
  But since $E^c\cap\Om^c=F^c\cap\Om^c$, by arguing as before we find
  \[
  |I(E W,E^c\Om^c)-I(F W,F^c\Om^c)|\le\int_{\Om^c}\,dy\int_{(E\Delta F)\cap W}K(x-y)\,dx\le C\,\frac{|E\Delta F|}{{{\varrho}}_0^s}\,,
  \]
  and \eqref{almost min caputo} is proved.
\end{proof}

\begin{corollary}
  If $E$ is a $(\Lambda,r_0,\s,K_s)$-minimizer in $(A,\Omega)$, then there exists a relatively closed subset $\S$ of $\Om\cap\pa E$ such that $\Om\cap\pa E\setminus\S$ is a $C^{1,\a}$-hypersurface for some $\a\in(0,1)$ and $\S$ has Hausdorff dimension at most $n-3$. In particular, $\S$ is empty if $n=2$.
\end{corollary}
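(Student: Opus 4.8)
The plan is to reduce the statement to the interior regularity theory for $\Lambda$-minimizers of the $s$-perimeter developed in \cite{caffaroquesavin,caputoguillen}, together with the singular-set dimension estimate of \cite{savinvaldinoci}; the link is the local almost-minimality inequality \eqref{almost min caputo} proved in the preceding Proposition. Since the conclusion is local in $A\cap\Om$, I would fix an arbitrary $x_0\in A\cap\Om\cap\pa E$ and, using that $A\cap\Om$ is open, pick $\varrho_0\in(0,r_0]$ with $B_{2\varrho_0}(x_0)\cc A\cap\Om$. For $K=K_s$ one has $P=P_s$ and $\l=1$, so \eqref{almost min caputo} becomes
\[
P_s(E;B_{\varrho_0}(x_0))\le P_s(F;B_{\varrho_0}(x_0))+\frac{C}{\varrho_0^{\,s}}\,|E\Delta F|\,,\qquad C=C(\Lambda,n,s)\,,
\]
for every $F$ with $E\Delta F\cc B_{\varrho_0}(x_0)$; any such $F$ satisfies $F\subset\Om$, $\diam(F\Delta E)<2r_0$ and $F\cap A^c=E\cap A^c$, hence is admissible in \eqref{Lambda r0 minimizer section}.

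Next I would rescale. Writing $\tilde E=E^{x_0,\varrho_0}$ and $\tilde F=F^{x_0,\varrho_0}$, one checks that each of the three interaction terms defining $P_s(\cdot;B_{\varrho_0}(x_0))$ scales by $\varrho_0^{\,n-s}$ while $|E\Delta F|=\varrho_0^{\,n}|\tilde E\Delta\tilde F|$, so the displayed inequality is equivalent to
\[
P_s(\tilde E;B_1)\le P_s(\tilde F;B_1)+C\,|\tilde E\Delta\tilde F|\,,\qquad\mbox{whenever }\tilde E\Delta\tilde F\cc B_1\,,
\]
with the \emph{same} constant $C$. This is precisely the notion of $\Lambda$-minimizer of the fractional perimeter in $B_1$ (with $\Lambda=C$ universal) for which the interior theory of \cite{caffaroquesavin,caputoguillen} applies: the essential point is that the volume penalization has the correct lower order, so that blow-ups of $\tilde E$ at boundary points are $s$-minimizing cones and Federer's dimension-reduction argument is available. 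The cited results then produce $\a=\a(n,s)\in(0,1)$ and a relatively closed $\tilde\Sigma\subset B_{1/2}\cap\pa\tilde E$ of Hausdorff dimension at most $n-3$ such that $(B_{1/2}\cap\pa\tilde E)\setminus\tilde\Sigma$ is a $C^{1,\a}$-hypersurface; by higher regularity (e.g. \cite{barrerafigallivaldinoci}) one may take $\a>s$, consistently with the definition of ${\rm Reg}_E$.

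Scaling back to $B_{\varrho_0/2}(x_0)$ and letting $x_0$ vary over $A\cap\Om\cap\pa E$, the regular points form a relatively open $C^{1,\a}$-hypersurface inside $A\cap\Om\cap\pa E$; its complement $\Sigma$ (namely $A\cap\Om\cap\S_E$) is relatively closed in $\Om\cap\pa E$ and has Hausdorff dimension at most $n-3$, which is the assertion. When $n=2$, a set of Hausdorff dimension at most $n-3=-1$ must be empty, so $\Sigma=\varnothing$. The only step that is not pure bookkeeping and scaling is checking that, after the rescaling above, \eqref{almost min caputo} is exactly the almost-minimality hypothesis under which the $\e$-regularity theorem and the singular-set estimates of \cite{caffaroquesavin,caputoguillen,savinvaldinoci} are formulated — in particular that the admissible competitors are those compactly supported in a fixed ball and that the penalization scales as $C\,|\tilde E\Delta\tilde F|$; once this matching is done, the conclusion is immediate.
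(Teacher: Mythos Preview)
Your proposal is correct and follows the same approach as the paper: invoke the preceding Proposition to obtain \eqref{almost min caputo}, then apply the interior regularity theory of \cite{caputoguillen} (and \cite{caffaroquesavin}) together with the improved dimension bound of \cite{savinvaldinoci}. The paper's proof is in fact just a two-line citation of these results; your version makes explicit the rescaling step and the verification that the competitors and the volume penalization match the hypotheses of the cited theorems, which is helpful bookkeeping but not a different idea.
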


\begin{proof}
  The validity of \eqref{almost min caputo} allows one to apply the main result of \cite{caputoguillen} and the deduce the above assertion with the Hausdorff dimension of $\S$ bounded by $n-2$. The improvement on the dimensional bound for $\S$ is obtained by exploiting \cite{savinvaldinoci}.
\end{proof}

We now show that minimizers in \eqref{variational problem} are almost-minimizers.

\begin{proposition}\label{corollary minimi sono lambda minimi}
  If $E$ is a minimizer in \eqref{variational problem}, then $E$ is a $(\Lambda,r_0,\s,K)$-minimizer in $(\R^n,\Om)$ for values of $r_0$ and $\Lambda$ depending on $E$ and $\|g\|_{L^\infty(\Om)}$ only.
\end{proposition}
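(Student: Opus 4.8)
The plan is to run the classical volume\nobreakdash-fixing argument of Almgren and Tamanini (see \cite{Almgren76,tamanini} and \cite[Section~21]{maggiBOOK}) in the present nonlocal setting, converting the global volume\nobreakdash-constrained minimality of $E$ into the unconstrained almost\nobreakdash-minimality inequality \eqref{Lambda r0 minimizer section} with $A=\R^n$. When $A=\R^n$ the constraint $F\cap A^c=E\cap A^c$ is vacuous and \eqref{Lambda r0 minimizer section} reduces to $\E(E)\le\E(F)+\Lambda\,|E\Delta F|$, which I must prove for every $F\subset\Om$ with $\diam(F\Delta E)<2\,r_0$. I may assume $I(F,F^c\Om)<\infty$, since otherwise $\E(F)=+\infty$ and there is nothing to prove, and I set $\s_F:=|E|-|F|$, so that $|\s_F|\le|E\Delta F|\le\om_n\,(2r_0)^n$.

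First I would set up two volume\nobreakdash-fixing deformations. Since $0<|E|<|\Om|$, the robust boundary $\Om\cap\pa E$ is non\nobreakdash-empty; I pick in it two distinct points $x_0,x_1$, and then choose $\varrho>0$ small enough that $\overline{B_{2\varrho}(x_i)}\subset\Om$ for $i=0,1$ and that any set of diameter $<2\varrho$ is disjoint from at least one of $B_\varrho(x_0),B_\varrho(x_1)$; henceforth $r_0\le\varrho$. For $i=0,1$, as $1_E$ is not a.e.\ constant on $B_\varrho(x_i)$ one may fix $X_i\in C^\infty_c(B_\varrho(x_i);\R^n)$ with $\int_E\Div X_i=1$, with flow $\{\Phi^i_t\}_{|t|<\de_0}$; then $\Phi^i_t=\Id$ outside $B_\varrho(x_i)$, $\Phi^i_t(\Om)=\Om$, and for every measurable $G$ one has $|\Phi^i_t(G)|=|G|+t\,\int_G\Div X_i+{\rm O}(t^2)$ and $|\Phi^i_t(G)\,\Delta\,G|\le C\,|t|$, the ${\rm O}$ and $C$ being uniform in $G$. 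The nonlocal ingredient I would need is the one\nobreakdash-sided Lipschitz bound
\[
\E(\Phi^i_t(G))\le\E(G)+C\,|t|\,,\qquad|t|<\de_0\,,
\]
valid whenever $I(G,G^c\Om)<\infty$ and $G$ coincides with $E$ on $B_\varrho(x_i)$, with $C$ depending only on $E,\l,n,s,\Om$; this I would obtain exactly as in the proof of Lemma~\ref{lemma weak EL} (change of variables via the area formula, together with the truncation $K_\de$ of \eqref{Kdelta def} to control the singularity and a possible lack of smoothness of $K$), the first\nobreakdash-variation term being dominated uniformly because $G=E$ on $\spt X_i$ reduces the relevant interactions to those of $E$, which are finite by Proposition~\ref{lemma esistenza} together with $I(E,\Om^c)\le P(\Om)<\infty$.

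Then, given an admissible competitor $F$, I would pick $i\in\{0,1\}$ with $B_\varrho(x_i)\cap(F\Delta E)=\varnothing$, which is possible by the choice of $x_0,x_1$ and $\diam(F\Delta E)<2\varrho$; thus $F=E$ on $B_\varrho(x_i)\supset\spt X_i$, so $\int_F\Div X_i=1$. By the implicit function theorem there is $t(\s_F)$ with $|t(\s_F)|\le C\,|\s_F|<\de_0$ (for $r_0$ small) and $|\Phi^i_{t(\s_F)}(F)|=|F|+\s_F=|E|$; hence $\widehat F:=\Phi^i_{t(\s_F)}(F)\subset\Om$ is admissible for \eqref{variational problem}, and by minimality of $E$, the Lipschitz bound applied to $G=F$, and $\big|\int_{\widehat F}g-\int_E g\big|\le\|g\|_{L^\infty(\Om)}\,|\widehat F\Delta E|$,
\[
\E(E)\le\E(\widehat F)+\|g\|_{L^\infty(\Om)}|\widehat F\Delta E|\le\E(F)+C\,|t(\s_F)|+\|g\|_{L^\infty(\Om)}|\widehat F\Delta E|\,.
\]
Since $|t(\s_F)|\le C|\s_F|\le C|E\Delta F|$ and $|\widehat F\Delta E|\le|\widehat F\Delta F|+|F\Delta E|\le C|\s_F|+|E\Delta F|\le C|E\Delta F|$, this gives $\E(E)\le\E(F)+\Lambda|E\Delta F|$ with $\Lambda=\Lambda(E,\|g\|_{L^\infty(\Om)})$ and $r_0=\varrho$, both depending only on $E$ (through the location of its robust boundary points) and on $\|g\|_{L^\infty(\Om)}$.

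I expect the main obstacle to be the Lipschitz bound for $\E$ under the deformations $\Phi^i_t$: because the kernel is singular and, for general $K\in\K(n,s,\l)$, only measurable, the change of $\E$ cannot be estimated by a naive pointwise comparison of $K(\Phi^i_t x-\Phi^i_t y)$ with $K(x-y)$, and one must argue through the area formula and the $K_\de$\nobreakdash-regularization precisely as in Steps two and three of the proof of Lemma~\ref{lemma weak EL}, with the extra care that the resulting constant be uniform over the competitors $F$ — which is exactly why it is essential that $F$ agree with $E$ on $\spt X_i$, and hence why two far\nobreakdash-apart robust boundary points of $E$ are used.
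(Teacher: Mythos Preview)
Your approach is correct and follows the same route as the paper: fix two well-separated points of $\Om\cap\pa E$ with associated volume-fixing vector fields, deform a given competitor $F$ by the field whose support avoids $F\Delta E$ so as to restore the constraint $|F|=|E|$, and conclude from global minimality together with a Lipschitz estimate for $\E$ under the deformation. The paper uses the affine map $f_t(x)=x+tT(x)$ and a multiplicative bound $|I(F_t,F_t^c\Om)-I(F,F^c\Om)|\le C|t|\,I(F,F^c\Om)$ in place of your flow and your additive bound, but the two formulations are interchangeable here (the multiplicative factor $I(F,F^c\Om)$ being harmless since one may assume $\E(F)<\E(E)$, whence $I(F,F^c\Om)\le\E(E)+P(\Om)$).
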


\begin{proof}
   Let us fix two points $x_0\ne y_0\in\Om\cap\pa E$ so that for some ${{\varrho}}_0>0$ we have
   \[
   |E\cap B_{{{\varrho}}_0}(x_0)|>0\,,\qquad |E\cap B_{{{\varrho}}_0}(y_0)|>0\,,\qquad |x_0-y_0|>4\,{{\varrho}}_0\,,\qquad B_{{{\varrho}}_0}(x_0)\cup B_{{{\varrho}}_0}(y_0)\cc\Om\,.
   \]
   Then there exists $T\in C^\infty_c(B_{{{\varrho}}_0}(x_0);\R^n)$ and $S\in C^\infty_c(B_{{{\varrho}}_0}(y_0);\R^n)$ such that
   \[
   \int_E\Div\,T=\int_E\Div\,S=1\,,
   \]
   see, e.g. \cite[Lemma 3.5]{colombomaggi}. Let us now pick $F\subset\Om$ with $\diam(F\Delta E)<2\,r_0$. If $r_0$ is small enough with respect to ${{\varrho}}_0$, then we either have $\dist(F,B_{{{\varrho}}_0}(x_0))>0$ or $\dist(F,B_{{{\varrho}}_0}(y_0))>0$. Without loss of generality, we may assume to be in the first case. Now let $f_t(x)=x+t\,T(x)$ and define
   \[
   F_t=\Big(f_t(E)\cap B_{{{\varrho}}_0}(x_0)\Big)\cup\Big(F\setminus B_{{{\varrho}}_0}(x_0)\Big)=f_t(F)
   \]
   for $|t|<\e_0$ and $\e_0$ small enough to ensure that $\{f_t\}_{|t|<\e_0}$ is a family of smooth diffeomorphisms with $\spt(f_t-\Id)\cc B_{{{\varrho}}_0}(x_0)$ for every $|t|<\e_0$. If we set $\vphi(t)=|F_t|$, then
   \[
   \vphi'(0)=\int_E\,\Div\,T=1\,,
   \]
   so that, up to decreasing the value of $\e_0$, $\vphi$ is strictly increasing on $(-\e_0,\e_0)$, with range $(-v_0,v_0)$ for some $v_0>0$. Notice that the size of $v_0$ only depends on $E$ through the choice of $x_0$ and of the vector field $T$. Thus, up to decreasing the value of $r_0$ depending on $E$, we find that $||F|-|E||<\om_n\,r_0^n<v_0$, and thus that there exists $t_*=t_*(F)$ such that
   \[
   |F_{t_*}|=|E|\qquad |t_*|\le C\,\big||F|-|E|\big|
   \]
   for a constant $C=C(E)$. By minimality of $E$ we have
   \[
   I(E,E^c\Om)+\s\,I(E,\Om^c)+\int_E g\le I(F_{t_*},F_{t_*}^c\Om)+\s\,I(F_{t_*},\Om^c)+\int_{F_{t_*}}g\,.
   \]
   Now, since for some $C=C(E)$ we have $|Jf_t(x)-1|\le C\,|t|$ and $|\nabla f_t|\le C$ on $\R^n$ for every $|t|<\e_0$, by the area formula we find
   \begin{eqnarray*}
   &&\big|I(F_t,F_t^c\Om)-I(F,F^c\Om)\big| \le C\,|t|\,I(F,F^c\Om)\,,
   \\
   &&\big|I(F_t,\Om^c)-I(F,\Om^c)\big|\le C\,|t|\,I(F,\Om^c)\,,
   \\
   &&\Big|\int_{F_t}g-\int_Eg\Big|\le \Big|\int_{F_t}g-\int_Fg\Big|+\|g\|_{L^\infty(\Om)}\,|E\Delta F|\le
   C\,|t|+\|g\|_{L^\infty(\Om)}\, |E\Delta F|\,,
   \end{eqnarray*}
   whenever $|t|<\e_0$. By exploiting these facts with $t=t_*$ and taking into account $ |t_*|\le C\,\big||F|-|E|\big|$, we conclude that
   \[
   I(E,E^c\Om)+\s\,I(E,\Om^c)+\le I(F,F^c\Om)+\s\,I(F,\Om^c)+C\,\big|E\Delta F\big|\,,
   \]
   where $\Lambda=\Lambda(E,\|g\|_{L^\infty(\Om)})$.
\end{proof}

\section{Density estimates at the boundary}\label{section boundary regularity}
%In our investigation of the geometric properties
%of the solutions of our capillarity problem, a key role
%is played by the density of the phases in the vicinity
%of the phase separation. We will prove that the energy
%in a ball of radius~$r$ contributes as~$r^{n-s}$
%and that both the phases has positive densities.
%The precise result is the following:
We now discuss the proof of Theorem \ref{thm density estimate intro}. We shall actually prove a more general result, involving the following notion of uniformly $C^1$ domain.
%The goal is this section is proving two basic results concerning the class of almost-minimizers introduced in Definition \ref{DFAL}, Theorem \ref{thm density estimate full} and Theorem \ref{DLVP}. As anticipated in the introduction, in the case of Theorem \ref{thm density estimate intro} we can prove a more precise result, which is formulated by using the following definitions.

\begin{definition}\label{def rho Omega}
  {\rm If $\eta>0$, $A$ is an open set, $\Om$ is an open set in $\R^n$ with boundary of class $C^1$ in $A$, and $H_p$ denotes the affine tangent half-space to $\Om$ at $p\in\pa\Om$, then we define
  \[
  \varrho_A(\eta,\Om)
  \]
  as the supremum of all $\varrho>0$ such that for every $p\in A\cap\pa\Om$  there exists a $C^1$-diffeomorphisms $T_p:\R^n\to\R^n$ with
  \begin{gather}
  %\label{Tp Id}
%  T_p=\Id\,,\qquad\mbox{on $\R^n\setminus B_{\varrho}(p)$},
%  \\
  \label{Tp Brho}
  T_p(B_{\varrho}(p))=B_{\varrho}(p)\,,
  \\\label{Tp BrhoOmega}
  T(B_{\varrho}(p)\cap\Om)=B_{\varrho}(p)\cap H_p\,,
  \\\label{Tp C0}
  \|T_p -\Id\|_{C^0(\R^n)}+\|T_p^{-1}-\Id\|_{C^0(\R^n)}\le \eta\,{{\varrho}}\,,
  \\\label{Tp C1}
  \|\nabla T_p -\Id\|_{C^0(\R^n)}+\|(\nabla T_p)^{-1}-\Id\|_{C^0(\R^n)}\le \eta\,.
  \end{gather}}
\end{definition}

\begin{remark}\label{remark stable blowup}
  {\rm If $\Om$ is a {\it bounded} open set with $C^1$-boundary, then $\varrho_{\R^n}(\eta,\Om)>0$; but, of course, one can have $\varrho_{\R^n}(\eta,\Om)>0$ even if $\Om$ is unbounded (for example, if $\varrho_{\R^n}(\eta,H)=\infty$ if $H$ is a half-space). We also notice that for every $x_0\in\R^n$ and $r>0$ one has
  \begin{equation}
    \label{scaling varrho}
      \varrho_A(\eta,\Om)=r\,\varrho_{A^{x_0,r}}(\eta,\Om^{x_0,r})\,.
  \end{equation}
  Indeed, given a set of maps $\{T_p\}_{p\in\pa\Om}$ associated to some $\varrho<\varrho_A(\eta,\Om)$ one can use the maps $\{S_q\}_{q\in\Om^{x_0,r}}$ defined by
  \[
  p=x_0+r\,q\,,\qquad S_q(y)=\frac{T_p(x_0+r\,y)-x_0}r\,,
  \]
  to show that $\varrho/r<\varrho_{A^{x_0,r}}(\eta,\Om^{x_0,r})$. In particular, $\varrho_A(\eta,\Om)\le\varrho_{A^{x_0,r}}(\eta,\Om^{x_0,r})$ for every $r\in(0,1)$, that is, the positivity of $\varrho_{A^{x_0,r}}(\eta,\Om^{x_0,r})$ is stable under blow-ups of $\Om$. Identity \eqref{scaling varrho} is needed to obtain density estimates that are stable under blow-up limits.}
\end{remark}

With Definition \ref{def rho Omega}, we can formulate the following improved version of Theorem \ref{thm density estimate intro}. Notice that the assumption of $\Omega$ being a bounded open set with $C^1$-boundary or an half-space is replaced here
by the requirement that ${{\varrho}}_A(\eta,\Om)>0$ for every $\eta>0$.

\begin{theorem}
  [Density estimates]\label{thm density estimate full} Let $n\ge 2$, $s\in(0,1)$, $\s\in(-1,1)$, $\Lambda\ge0$ and $K=K_s^\e$ for some $\e>0$.
%  \begin{equation}\label{D2}
%  |\nabla K(\zeta)|\le \frac{\l}{|\zeta|^{n+s+1}}\qquad\forall\zeta\ne 0\,.
%  \end{equation}
  If $A$ is an open set and $\Om$ is an open set with $C^1$-boundary in $A$ such that $\varrho_A(\eta,\Om)>0$ for every $\eta>0$, then there exist positive constants $C_0$  (depending on $n$, $s$, $\s$ and $\Lambda$), $c_*$ (depending on $n$ and $s$) and $\eta_1$ (depending on $n$, $s$ and $\s$) with the following property: for every $(\Lambda,r_0,\s,K_s^\e)$-minimizer $E$ in $(A,\Omega)$, one has
  \begin{equation}
    \label{upper perimeter estimate}
      I_s^\e(E B_r(x),(E B_r(x))^c)\le C_0\,r^{n-s}\,,
  \end{equation}
  whenever $B_r(x)\subset A$ and $r<\min\{r_0,c_*\,\varrho_A(\eta_1,\Om),c_*\,\e\}$, and, moreover,
  \begin{equation}
    \label{volume density estimates}
      \frac1{C_0}\le \frac{|E\cap B_r(x)|}{r^n}\le 1-\frac1{C_0}\,,
  \end{equation}
  whenever $B_r(x)\subset A$, $r<\min\{r_0,\,c_*\varrho_A(\eta_1,\Om),\,c_*\,\e\}$, and $x\in\overline{\Om\cap\pa E}$.
\end{theorem}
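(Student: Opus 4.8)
The plan is to prove \eqref{upper perimeter estimate} and the lower bound in \eqref{volume density estimates} by comparing $E$ with the set obtained by deleting a small ball, and then to get the upper bound in \eqref{volume density estimates} from the lower bound applied to $\Om\cap E^c$, which by Remark \ref{remark complement} is a $(\Lambda,r_0,-\s,K_s^\e)$-minimizer in $(A,\Om)$ with the same robust boundary as $E$ inside $\Om$. I abbreviate $G_r=E\cap B_r(x)$, $u(r)=|G_r|$, and note that $x\in\overline{\Om\cap\pa E}$ gives $u(\varrho)>0$ for all $\varrho>0$. The one device I need near $\pa\Om$ is this: if $r<c_*\,\varrho_A(\eta,\Om)$, then composing with the diffeomorphism $T_p$ of Definition \ref{def rho Omega} — whose Jacobian and bi-Lipschitz constant are $1+O(\eta)$ — reduces estimates at scale $r$ around a point of $\pa\Om$ to the model case of a half-space $H$, up to a factor $1+O(\eta)$. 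Since $I_s^\e(H\cap B_r,H^c)\le C(n,s)\,r^{n-s}$ by an elementary computation, this yields $I_s^\e(G_r,\Om^c)\le C(n,s)\,r^{n-s}$ for every $r$ we consider, after splitting $\Om^c$ into its portion in $B_{2r}(x)$ (handled by the flattening) and the remainder (bounded by $u(r)\int_{\{|z|>r\}}|z|^{-n-s}\,dz\le C(n)\,r^{-s}u(r)$).

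For \eqref{upper perimeter estimate} I would insert the competitor $F=E\setminus B_r(x)$ into \eqref{Lambda r0 minimizer section}; it is admissible because $B_r(x)\subset A$, so $\diam(F\Delta E)\le 2r$ and $F\cap A^c=E\cap A^c$. After cancelling every interaction that does not see $B_r(x)$, the minimality inequality collapses to
\[
I(G_r,E^c\Om)+\s\,I(G_r,\Om^c)\le I(G_r,E\setminus B_r(x))+\Lambda\,u(r)\,.
\]
Here $I(G_r,E\setminus B_r(x))\le P_s^\e(B_r(x))\le C(n,s)\,r^{n-s}$, $\Lambda u(r)\le C\,r^{n-s}$ (as $u(r)\le\om_n r^n$ and $r$ is bounded by the threshold), and $|\s|\,I(G_r,\Om^c)\le C\,r^{n-s}$ by the device above; hence $I(G_r,E^c\Om)\le C\,r^{n-s}$, and since $G_r\subset E\subset\Om$,
\[
I_s^\e(G_r,G_r^c)=I(G_r,E^c\Om)+I(G_r,\Om^c)+I(G_r,E\setminus B_r(x))\le C(n,s,\s,\Lambda)\,r^{n-s}\,,
\]
which is \eqref{upper perimeter estimate}.

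For the volume lower bound I would use the same competitor $F=E\setminus B_r(x)$. Setting $P_s^{\e,\Om}(G_r):=I_s^\e(G_r,\Om\setminus G_r)=I(G_r,E^c\Om)+I(G_r,E\setminus B_r(x))$ (the relative fractional perimeter of $G_r$ in $\Om$), the inequality above rearranges into
\[
P_s^{\e,\Om}(G_r)+\s\,I(G_r,\Om^c)\le 2\,I(G_r,E\setminus B_r(x))+\Lambda\,u(r)\,.
\]
If $\s\ge0$ the left-most extra term is nonnegative and is simply dropped. If $\s<0$ it equals $-|\s|\,I(G_r,\Om^c)$, and this is the crux. Here I would invoke the reflection inequality
\[
I_s^\e(G,\Om^c)\le(1+C(n,s)\,\eta)\,I_s^\e(G,\Om\setminus G)\qquad\text{for }G\subset\Om\text{ and }r<c_*\,\varrho_A(\eta,\Om)\,,
\]
which for a genuine half-space $H$ reduces to $I_s^\e(G,H^c)\le I_s^\e(G,H\setminus G)$ for every $G\subset H$ — a sharp geometric inequality, with slabs adjacent to $\pa H$ as equality cases, that transfers to uniformly-$C^1$ domains through the flattening map with the stated error. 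Picking $\eta=\eta_1(n,s,\s)$ so small that $|\s|\,(1+C(n,s)\,\eta_1)<1$, the term $-|\s|\,I(G_r,\Om^c)$ gets reabsorbed, leaving $P_s^{\e,\Om}(G_r)\le C(n,s,\s)\,\big(I(G_r,E\setminus B_r(x))+\Lambda\,u(r)\big)$; the dependence of $\eta_1$ on $\s$ is forced by the room $1-|\s|$ in this step.

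To conclude, I would run the standard fractional iteration. The fractional relative isoperimetric inequality in $\Om$ — obtained from the Euclidean fractional isoperimetric inequality by reflecting $G_r$ across $\pa\Om$, again using $\eta_1$-flatness at scale $r$ — gives $P_s^{\e,\Om}(G_r)\ge c(n,s)\,u(r)^{(n-s)/n}$ once $r<c_*\,\e$ (so the truncation is inactive at scale $r$); and slicing $I(G_r,E\setminus B_r(x))\le I_s(G_r,B_r(x)^c)$ in radial coordinates gives $I(G_r,E\setminus B_r(x))\le C(n,s)\int_0^r u'(t)\,(r-t)^{-s}\,dt$. Combining, $u(r)^{(n-s)/n}\le C\int_0^r u'(t)\,(r-t)^{-s}\,dt+C\,\Lambda\,u(r)$, where for $r$ small the last term is absorbed into the left-hand side; since $u>0$ on $(0,\infty)$, this differential inequality forces $u(r)\ge c(n,s,\s,\Lambda)\,r^n$ for $r$ below the stated threshold, which is the lower bound in \eqref{volume density estimates}. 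Applying it to $\Om\cap E^c$ yields $|\Om\cap B_r(x)|-|E\cap B_r(x)|\ge c\,r^n$ and hence the upper bound, and collecting the constants proves Theorem \ref{thm density estimate full}. The genuinely new point, and the main obstacle, is the reflection inequality that controls the negative adhesion term when $\s<0$; the remainder is the nonlocal transcription of the classical density-estimate scheme, its scale-invariant formulation (see Remark \ref{remark stable blowup}) being what keeps the constants stable under blow-ups.
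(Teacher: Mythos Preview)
Your proposal is correct and follows essentially the same route as the paper: competitor $F=E\setminus B_r(x)$, the reflection inequality $I_s^\e(G,\Om^c)\lesssim I_s^\e(G,\Om\setminus G)$ (which is exactly the paper's Lemma~\ref{lemma bordo}, proved there via the nonlocal minimality of half-spaces rather than by a bare reflection), the fractional isoperimetric inequality, and the Caffarelli--Roquejoffre--Savin differential-inequality iteration, with the upper volume bound obtained via Remark~\ref{remark complement}. The only differences are organizational: the paper passes through the full perimeter $P_s^\e(E\cap B_r(x))$ (adding $I_s^\e(G_r,\Om^c)$ back in after absorbing the $\s$-term) and applies the Euclidean isoperimetric inequality directly, whereas you keep the relative perimeter and invoke a relative isoperimetric inequality, and the paper reads off the perimeter bound \eqref{upper perimeter estimate} from the same master inequality \eqref{density key} rather than bounding $I(G_r,\Om^c)$ separately.
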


We now turn to the proof of Theorem \ref{thm density estimate full}. A key tool is a geometric inequality, stated in Lemma \ref{lemma bordo} below, which can be introduced by the following considerations. A crucial role in the study of local capillarity problems is played by the geometric remark that, if ${\rm Per}$ denotes the classical (local) perimeter, then
\begin{equation}
  \label{minimality halfspace local}
  {\rm Per}(Z;H)\ge {\rm Per}(Z;\pa H)\,,
\end{equation}
whenever $Z\subset H$ is of finite perimeter and finite volume (this is a consequence of the divergence theorem; see, for example, \cite[Proposition 19.22]{maggiBOOK}). An analogous inequality to \eqref{minimality halfspace local} holds for fractional perimeters too: if $H$ is a half-space in $\R^n$ and $Z$ is a bounded subset of $H$, then
\begin{equation}
  \label{minimality halfspace}
  I_s(Z,Z^c H)\ge I_s(Z,H^c)\,.
\end{equation}
Indeed, let $R>0$ be such that $Z\subset H\cap B_R$. If we set $J=H^c$ and $Y=J\cup Z$, then $J$ is a half-space and $Y\setminus B_R=J\setminus B_R$, so that, by \cite[Corollary 5.3(b)]{caffaroquesavin} (see also \cite[Proposition 17]{ambrosiodephilippismartinazzi}),
\begin{eqnarray}\label{minimality halfspace1}
I_s(Y B_R,Y^c)+I_s(Y B_R^c,Y^c B_R)\ge I_s(J B_R,J^c)+I_s(J B_R^c,J^c B_R)\,.
\end{eqnarray}
Since $Y^c=Z^c\cap H$ and $Z\subset H\cap B_R$, one finds
\begin{eqnarray*}
  &&I_s(Y B_R,Y^c)=I_s(Z,Z^c H)+I_s(H^c B_R,Z^c H)\,,
  \\
  &&I_s(Y B_R^c,Y^c B_R)=I_s(H^c B_R^c,H Z^c B_R)\,,
\end{eqnarray*}
which, combined with \eqref{minimality halfspace1}, gives
\begin{eqnarray*}
  I_s(Z,Z^c  H)&\ge& I_s(H^c  B_R,H)-I_s(H^c  B_R,Z^c  H)
  \\
  &&+I_s(H^c  B_R^c,H  B_R)-I_s(H^c  B_R^c,H  Z^c  B_R)
  \\
  &=&
  I_s(H^c  B_R,Z)+I_s(H^c  B_R^c,Z)=I_s(Z,H^c)\,.
\end{eqnarray*}
(This argument actually shows that \eqref{minimality halfspace} is equivalent to \eqref{minimality halfspace1}.)
We now want to generalize \eqref{minimality halfspace} to the case when an open set $\Om$ takes the place of the half-space $H$. The idea is that on sets of sufficiently small diameter, if the boundary $\Om$ is regular enough to be locally close to a half-space at each of its boundary points, then an inequality like \eqref{minimality halfspace} should hold true with some error terms.

\begin{lemma}\label{lemma bordo}
  Given $n\ge 2$, $s\in(0,1)$, and $\e>0$ there exist positive constants $C_\star$ and $\eta_0$ (depending on $n$ and $s$, and with $C_\star\,\eta_0<1$) with the following property. If $A$ is an open set, $\Om$ is an open set with $C^1$-boundary in $A$, $\eta\in(0,\eta_0)$,
  \begin{equation}
    \label{rstar}
      r_\star:=\min\Big\{\frac{\varrho_A(\eta,\Om)}{4\,C_\star},\frac{\e}{2\,C_\star}\Big\}
  \end{equation}
  and
  \begin{equation}
    \label{nested G}
      G\subset\Om\cap B_{r_\star}(x)\qquad \mbox{for some $x\in\R^n$}
  \end{equation}
  then
  \begin{equation}
    \label{geometrica bordo}
    I_s^\e(G,G^c \Om)\ge(1-C_\star\,\eta)\,I_s^\e(G,\Om^c)-\frac{C_\star}{r_\star^s}\,|G|\,,
  \end{equation}
\end{lemma}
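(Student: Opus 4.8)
\textit{Plan.} The strategy is to distinguish whether the small ball $B_{r_\star}(x)$ containing $G$ sits far from or close to $\pa\Om$. The far case is elementary; in the near case the plan is to transport $G$ through the flattening diffeomorphism $T_p$ given by Definition~\ref{def rho Omega} so as to reduce \eqref{geometrica bordo} to the half-space inequality \eqref{minimality halfspace}, which is already in hand. One must keep track of two kinds of error: multiplicative errors of size $O(\eta)$ times a (fractional) perimeter, which are absorbed into the factor $1-C_\star\eta$, and additive errors of size $O(r_\star^{-s})\,|G|$. We may assume $I_s^\e(G,G^c\Om)<\infty$, otherwise \eqref{geometrica bordo} is trivial.

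\textit{Far case.} Suppose $\dist(x,\pa\Om)\ge 2\,r_\star$. Then $B_{2r_\star}(x)$ is contained either in $\Om^c$ — so that $G$ is Lebesgue-null and \eqref{geometrica bordo} holds trivially — or in $\Om$, in which case $\dist(y,\Om^c)\ge r_\star$ for every $y\in G$, whence $\int_{\Om^c}K_s^\e(y-z)\,dz\le n\,\om_n/(s\,r_\star^s)$ and $I_s^\e(G,\Om^c)\le (n\,\om_n/s)\,r_\star^{-s}\,|G|$. Taking $C_\star\ge n\,\om_n/s$ then makes the right-hand side of \eqref{geometrica bordo} nonpositive, while its left-hand side is nonnegative.

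\textit{Near case.} Suppose $\dist(x,\pa\Om)<2\,r_\star$ and pick $p\in A\cap\pa\Om$ with $|x-p|<2\,r_\star$ (the nearest boundary point lies in $A$ in the situation where the lemma is applied), so that $G\subset\Om\cap B_{3r_\star}(p)$. Put $\e'=2\,C_\star\,r_\star$ and $\varrho=(5+2C_\star)\,r_\star$; using \eqref{rstar} and $C_\star\ge3$ one checks $\e'\le\e$ and $\varrho<\varrho_A(\eta,\Om)$, so a $C^1$-diffeomorphism $T=T_p$ as in \eqref{Tp Brho}, \eqref{Tp BrhoOmega}, \eqref{Tp C0}, \eqref{Tp C1} for this $\varrho$ is available. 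By \eqref{Tp C0} one has $G':=T(G)\subset H\cap B_{4r_\star}(p)$, where $H:=H_p$, and every interaction truncated at scale $\e'$ issued from $B_{3r_\star}(p)$ or from $B_{4r_\star}(p)$ remains inside $B_\varrho(p)$, i.e.\ inside the region where $T$ flattens $\Om$ onto $H$. Recording that \eqref{Tp C1} gives $(1-\eta)|x-y|\le|T(x)-T(y)|\le(1+\eta)|x-y|$ and $|\det\nabla T-1|\le C(n)\,\eta$, the change of variables $x\mapsto T(x)$, $y\mapsto T(y)$ — together with $T(G^c\cap\Om\cap B_\varrho(p))=(G')^c\cap H\cap B_\varrho(p)$ and $T(\Om^c\cap B_\varrho(p))=H^c\cap B_\varrho(p)$, which follow from \eqref{Tp BrhoOmega} and the bijectivity of $T$ — yields, for a constant $C=C(n,s)$ and with $\e''=\e'/(1+\eta)$, $\e'''=\e'/(1+\eta)^2$,
\[
I_s^\e(G,G^c\Om)\ \ge\ I_s^{\e'}(G,G^c\Om)\ \ge\ (1-C\eta)\,I_s^{\e''}\!\big(G',(G')^c\cap H\big),
\]
\[
I_s^{\e''}(G',H^c)\ \ge\ (1-C\eta)\,I_s^{\e'''}(G,\Om^c).
\]
Since $\e'',\e'''\ge r_\star$ and $|G'|\le 2|G|$, comparing truncated with untruncated perimeters costs only tails bounded by $(n\,\om_n/s)\,r_\star^{-s}|G'|$, so that
\[
I_s^{\e''}\!\big(G',(G')^c\cap H\big)\ \ge\ I_s\!\big(G',(G')^c\cap H\big)-\frac{C}{r_\star^s}\,|G|,\qquad
I_s^{\e'''}(G,\Om^c)\ \ge\ I_s^\e(G,\Om^c)-\frac{C}{r_\star^s}\,|G|.
\]
Finally, the half-space inequality \eqref{minimality halfspace} applied to the bounded set $G'\subset H$ gives $I_s\big(G',(G')^c\cap H\big)\ge I_s(G',H^c)\ge I_s^{\e''}(G',H^c)$; chaining all the displayed estimates yields
\[
I_s^\e(G,G^c\Om)\ \ge\ (1-C\eta)^2\,I_s^\e(G,\Om^c)-\frac{C}{r_\star^s}\,|G|.
\]
It then suffices to fix $C_\star$ larger than the constants $C(n,s)$ above and than $n\,\om_n/s$, and $\eta_0$ so small that $(1-C\eta_0)^2\ge 1-C_\star\eta_0$, the Jacobian estimate holds, and $C_\star\,\eta_0<1$; then the last display is exactly \eqref{geometrica bordo}.

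\textit{Main obstacle.} I do not expect a conceptual obstacle — the only nontrivial geometric ingredient, the flat-interface minimality \eqref{minimality halfspace}, is available — but rather a bookkeeping one: controlling the decreasing chain of truncation radii $\e\ge\e'\ge\e''\ge\e'''\ge r_\star$ and checking that every comparison between truncated perimeters at slightly different scales (and between $G$ and its image $G'$) produces only errors of the two admissible types, so that after the final chaining and the choice of constants they collapse into the clean inequality \eqref{geometrica bordo}.
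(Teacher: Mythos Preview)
Your argument is correct and follows the same two–case skeleton as the paper (far from versus near $\pa\Om$, the latter reduced via the flattening map to the half-space inequality \eqref{minimality halfspace}), but the implementations differ in both cases.

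In the far case the paper proves the \emph{sharper} inequality $I_s^\e(G,G^c\Om)\ge I_s^\e(G,\Om^c)$ (with no error term) under the stronger hypothesis $B_{C_\star r_\star}(x)\subset\Om$, and does so by invoking the fractional isoperimetric inequality \eqref{fractional isoperimetric inequality}. Your argument is more elementary: you simply bound $I_s^\e(G,\Om^c)$ by the tail $C\,r_\star^{-s}|G|$ and absorb it into the additive error already present in \eqref{geometrica bordo}. This is a genuine simplification, and it suffices for the lemma as stated.

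In the near case the paper works with the \emph{untruncated} functional $I_s$ throughout: it applies \eqref{minimality halfspace} to $T_p(G)$, then splits each interaction into pieces inside and outside $B_\varrho(p)$, transports the inside pieces via \eqref{circa}, bounds the outside pieces by $C\,r_\star^{-s}|G|$, and only at the very end passes from $I_s$ back to $I_s^\e$ via a single tail estimate. Your route instead stays with truncated functionals and tracks a decreasing chain $\e\ge\e'\ge\e''\ge\e'''\ge r_\star$ of truncation radii, so that every interaction automatically stays inside $B_\varrho(p)$ and no inside/outside splitting is needed; the price is the bookkeeping of the chain (which you correctly identified as the main obstacle). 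Both routes yield the same multiplicative $(1-C\eta)$ and additive $C\,r_\star^{-s}|G|$ errors. The parenthetical caveat that the relevant boundary point $p$ lies in $A$ is the same tacit assumption the paper makes, and is harmless in the application (Theorem~\ref{thm density estimate full}), where one always has $B_r(x)\subset A$.
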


\begin{proof} Let us fix $\eta\in(0,\eta_0)$, assume without loss of generality that $\varrho_A(\eta,\Om)>0$, define $r_\star$ by \eqref{rstar}, and directly consider the case $|G|>0$. The idea is that when $B_{r_\star}(x)$ is sufficiently close to $\pa \Om$, then one can first ``flatten'' the boundary and then exploit the local minimality of half-spaces expressed in \eqref{minimality halfspace} in order to obtain \eqref{geometrica bordo}. If, instead,
$B_{r_\star}(x)$ is away from $\pa\Om$ then \eqref{geometrica bordo} follows by the isoperimetric inequality (for the fractional perimeter $P_s$).

\medskip

\noindent {\it Step one}: We prove that if, in addition to \eqref{nested G}, we have $B_{C_\star r_\star}(x)\subset\Omega$, then
\begin{equation}\label{st1}
I_s^\e(G,G^c\Om)\ge I_s^\e(G,\Om^c)\,.
\end{equation}
First we notice that, trivially,
\begin{equation}
  \label{st000}
|z-y|\ge\frac{C_\star-1}{C_\star}|z-x|\qquad\forall y\in B_{r_\star}(x)\,,z\in\ B_{C_\star\,r_\star}(x)^c\,.
\end{equation}
(We definitely assume that $C_\star>1$.)
By assumption we have $G\subset B_{r_\star}(x)$ and $\Om^c\subset B_{C_\star\,r_\star}(x)^c$, so that \eqref{st000} and $K_s^\e=1_{B_\e}\,K_s$ give us
\begin{eqnarray}\nonumber
I_s^\e(G,\Omega^c)&\le&I_s^\e(G,B_{C_\star r_\star}(x)^c)\le\left(\frac{C_\star}{C_\star-1}\right)^{n+s}
\int_{G}dy\int_{B_{C_\star r_\star}(x)^c}\frac{dz}{|z-x|^{n+s}}
\\\label{st2}
&\le&n\,\om_n \left( \frac{C_\star}{C_\star-1}\right)^{n+s}\,|G|\,\int_{C_\star r_\star}^{+\infty} \varrho^{-1-s}\,d\varrho=
\frac{n\,\om_n}{s}\left(\frac{C_\star}{C_\star-1}\right)^{n+s}\,\frac{|G|}{(C_\star r)^s}\hspace{0.9cm}
\end{eqnarray}
where~$\omega_n$ is the volume of the unit ball. At the same time, by the fractional isoperimetric inequality (see \cite{frankliebseiringer,caffarellivaldinoci}) we have that
\begin{equation}
  \label{fractional isoperimetric inequality}
  P_s(G)\ge \frac{P_s(B_1)}{\om_n^{(n-s)/n}}\,|G|^{(n-s)/n}\,.
\end{equation}
Since $(C_\star+1)\,r_\star\le 2\,C_\star r_\star<\e$ and $G\subset B_{r_\star}(x)$ we have that
\[
|y-z|\le\e\qquad\forall y\in G \,, z\in B_{C_\star r_\star}(x)\,,
\]
so that by $K_s^\e=1_{B_\e}K_s$ and by $B_{C_\star r_\star}(x)\subset\Om$ we find
\[
I_s^\e(G,G^c\Om)\ge I_s^\e(G,G^cB_{C_\star r_\star}(x))\ge I_s(G,G^cB_{C_\star r_\star}(x))=
\Big(P_s(G)-I_s(G;B_{C_\star r_\star}(x)^c)\Big)\,.
\]
Hence, by \eqref{fractional isoperimetric inequality} and \eqref{st2}, we have
\begin{eqnarray*}
\frac{I_s^\e(G,G^c\Om)}{I_s^\e(G,\Om^c)}&\ge& \frac{ \frac{P_s(B_1)}{\om_n^{(n-s)/n}}\,|G|^{(n-s)/n}
-\frac{n\,\om_n}{s}\left(\frac{C_\star}{C_\star-1}
\right)^{n+s}\,\frac{|G|}{(C_\star r)^s}  }{\frac{n\,\om_n}{s}\left(\frac{C_\star}{C_\star-1}
\right)^{n+s}\,\frac{|G|}{(C_\star r)^s} }
\\
&=&
\frac{s}{n}\,P_s(B_1)\Big(\frac{C_\star-1}{C_\star}\Big)^{n+s}\,C_\star^s\,\Big(\frac{|B_{r_\star}(x)|}{|G|}\Big)^{s/n}-1
\\
&\ge& \frac{s}{n}\,P_s(B_1)\Big(\frac{C_\star-1}{C_\star}\Big)^{n+s}\,C_\star^s\,-1 \ge 1\,,
\end{eqnarray*}
where the last inequality holds provided $C_\star$ is large enough depending on $n$ and $s$.

\medskip

\noindent {\it Step two}: We now complete the proof of the lemma. We first notice that
\begin{equation}\label{ABE}
|K_s(\zeta_1)-K_s(\zeta_1)|\le C(n,s)\,\frac{K_s(\zeta_1)}{|\zeta_1|}\,|\zeta_1-\zeta_2|
\end{equation}
whenever $|\zeta_1-\zeta_2|\le|\zeta_1|/2$.
%\begin{split}
%&{\mbox{if~$|\zeta_2-\beta|\le\eta|\beta|$ for some~$\eta\in(0,1/2)$
%and~$|\beta|<\e$,}}\\
%&{\mbox{then $|K(\zeta_2)-K(\beta)|\le C_1\,\eta K(\beta)$.}}
%\end{split}
%\end{equation}
Indeed, if~$t\in[0,1]$, then
$$ |t\zeta_2+(1-t)\zeta_1|\ge |\zeta_1|-|\zeta_2-\zeta_1|\ge
\frac{|\zeta_1|}{2}.$$
and thus
\begin{eqnarray*}
|K_s(\zeta_2)-K_s(\zeta_1)|&\le&\sup_{t\in[0,1]}
|\nabla K_s(t\zeta_2+(1-t)\zeta_1)|\,|\zeta_2-\zeta_1|
\\
&\le&
\sup_{t\in(0,1)}\frac{|\zeta_2-\zeta_1|}{|t\zeta_2+(1-t)\zeta_1|^{n+s+1}}
\le C(n,s)\,\frac{K_s(\zeta_1)}{|\zeta_1|}\,|\zeta_2-\zeta_1|\,.
\end{eqnarray*}
This proves~\eqref{ABE}, which we
are going to use now in the proof of \eqref{geometrica bordo}.

Given step one, we may directly assume that there exists $p\in B_{C_\star r_\star}(x)\cap\pa\Om$ (as well as that $B_{r_\star}(x)\cap\Om\ne\varnothing$, otherwise \eqref{nested G} would give $G=\varnothing$). The existence of $p$ gives
\begin{equation}\label{st5}
B_{2\,r_\star}(x)\subset B_{\varrho}(p)\,,\qquad\mbox{for some $\varrho<\varrho(\eta,\Omega)$}\,.
\end{equation}
Indeed, if~$q\in B_{2\,r_\star}(x)$ and we pick $C_\star>2$, then we find
\[
|q-p|\le |q-x|+|x-p|\le 2\,r_\star+C_\star r_\star<2 C_\star r_\star<\varrho(\eta,\Omega)
\]
by definition of $r_\star$. By definition of $\varrho_A(\eta,\Omega)$ there exists a $C^1$-diffeomorphisms $T_p:\R^n\to\R^n$ such that \eqref{Tp Brho}--\eqref{Tp C1} hold with $\varrho$ as in \eqref{st5}. In particular \eqref{Tp C1}
gives that
\[
\big| \big( T_p(z)-T_p(y)\big) -(z-y)\big|
\le\eta\,|z-y|\,,\qquad\forall z,\;y\in\R^n\,,
\]
so that, provided $\eta_0<1/2$, in view of~\eqref{ABE},
\begin{eqnarray*}
  \Big|K_s\big(T_p(z)-T_p(y)\big)-K_s(z-y)\Big|\le C(n,s)\,K_s(z-y)\,\eta\,,\qquad\mbox{if $|z-y|<\e$}\,.
\end{eqnarray*}
At the same time \eqref{Tp C1} implies
\begin{eqnarray*}
  \Big|JT_p(z)\,JT_p(y)-1\Big|\le C(n)\,\eta\qquad\forall z,y\in\R^n\,,
\end{eqnarray*}
so that in conclusion, for every $z,y\in\R$ we have
\[
\Big|JT_p(z)\,JT_p(y)\,K_s\big(T_p(z)-T_p(y)\big)-K_s(z-y)\Big|\le C_\star\,\eta\,K_s(z-y)\,.
\]
By the area formula, for every pair of disjoint sets $A_1, A_2\subset\R^n$ one has that $I_s(A_1,A_2)$ if finite if and only if $I_s(T_p(A_1),T_p(A_2))$ is finite, with
\begin{equation}
  \label{circa}
(1-C_\star\,\eta)I_s(A_1,A_2)\le I_s(T_p(A_1),T_p(A_2))\le (1+C_\star\,\eta)\,I_s(A_1,A_2)\,.
\end{equation}
We are now in the position to conclude our argument. By $G\subset B_{r_\star}(x)\cap\Om$ and \eqref{st5} we find $T_p(G)\subset H\cap B_{\varrho}(p)$, where $H=H_p$ is the affine tangent half-space to $\Om$ at $p$ (see Definition \ref{def rho Omega}). Thus we can apply \eqref{minimality halfspace} to $Z=T_p(G)$ and find
\begin{equation}
  \label{delicata}
  I_s(T_p(G),T_p(G)^c  H)\ge I_s(T_p(G),H^c)\,,
\end{equation}
which is equivalently written (by using $T_p(G)^c\cap B_{\varrho}(p)^c=B_{\varrho}(p)^c$) as
\begin{eqnarray*}
&&I_s(T_p(G),T_p(G)^c  H  B_{\varrho}(p))+
I_s(T_p(G),H  B_{\varrho}(p)^c)
\\
&&\ge
I_s(T_p(G),H^c  B_{\varrho}(p))+I_s(T_p(G),H^c  B_{\varrho}(p)^c)\,.
\end{eqnarray*}
Since $H\cap B_{\varrho}(p)=T_p(\Om\cap B_{\varrho}(p))$ and $H^c\cap B_{\varrho}(p)=T_p(\Om^c\cap B_{\varrho}(p))$, by exploiting  \eqref{circa} we obtain
\begin{eqnarray}\label{mt1}
&&(1+C_\star\,\eta)\,I_s(G,G^c  \Om  B_{\varrho}(p))+
I_s(T_p(G),H  B_{\varrho}(p)^c)
\\\nonumber
&\ge&
(1-C_\star\,\eta)\,I_s(G,\Om^c  B_{\varrho}(p))+I_s(T_p(G),H^c  B_{\varrho}(p)^c)\,.
\end{eqnarray}
Again by \eqref{circa} one finds
\begin{eqnarray*}
  &&|I_s(T_p(G),H  B_{\varrho}(p)^c)-I_s(G,\Om  B_{\varrho}(p)^c)|
  \\
  &\le&|I_s(T_p(G),T_p(\Om  B_{\varrho}(p)^c))-I_s(G,\Om  B_{\varrho}(p)^c)|
  \\
  &&+|I_s(T_p(G),T_p(\Om  B_{\varrho}(p)^c))-I_s(T_p(G),H  B_{\varrho}(p)^c)|
  \\
  &\le& C_\star\eta I_s(G,\Om  B_{\varrho}(p)^c)+I_s(T_p(G),B_{\varrho}(p)^c)
  \\
  &\le& C_\star\eta I_s(G,\Om  B_{\varrho}(p)^c)+(1+C_\star\,\eta)\,I_s(G,B_{\varrho}(p)^c)\,,
\end{eqnarray*}
so that \eqref{mt1} gives
\begin{eqnarray}\label{mt2}
&&(1+C_\star\,\eta)\,\big(I_s(G,G^c  \Om)+I_s(G,B_{\varrho}(p)^c)\big)
\\\nonumber
&\ge&
(1-C_\star\,\eta)\,I_s(G,\Om^c  B_{\varrho}(p))+I_s(T_p(G),H^c  B_{\varrho}(p)^c)\,.
\end{eqnarray}
Similarly,  by using \eqref{circa} one more time,
\begin{eqnarray*}
  &&|I_s(T_p(G),H^c  B_{\varrho}(p)^c)-I_s(G,\Om^c  B_{\varrho}(p)^c)|
  \\
  &\le&|I_s(T_p(G),T_p(\Om^c  B_{\varrho}(p)^c))-I_s(G,\Om^c  B_{\varrho}(p)^c)|
  \\
  &&+|I_s(T_p(G),T_p(\Om^c  B_{\varrho}(p)^c))-I_s(T_p(G),H^c  B_{\varrho}(p)^c)|
  \\
  &\le& C_\star\,\eta I_s(G,\Om^c  B_{\varrho}(p)^c)+I_s(T_p(G),B_{\varrho}(p)^c)
  \\
  &\le& C_\star\,\eta I_s(G,\Om^c  B_{\varrho}(p)^c)+(1+C_\star\,\eta)\,I_s(G,B_{\varrho}(p)^c)\,,
\end{eqnarray*}
which plugged into \eqref{mt2} gives, as $C_\star\eta_0<1$,
\begin{eqnarray}\label{mt3}
(1+C_\star\,\eta)\,I_s(G,G^c  \Om)\ge (1-C_\star\,\eta)\,I_s(G,\Om^c)-4\,I_s(G,B_{\varrho}(p)^c)\,.
\end{eqnarray}
Since $G\subset B_{r_\star}(x)$ with $B_{2\,r_\star}(x)\subset B_{\varrho}(p)$, recalling~\eqref{kernelclass 0}
we have
\begin{eqnarray*}
I_s(G,B_{\varrho}(p)^c)\le \,\int_G\,dz\int_{B_{2\,r_\star}(x)^c}\frac{dy}{|z-y|^{n+s}}=
  \frac{n\om_n}{s\,2^s\,r_\star^s}\,|G|\,,
\end{eqnarray*}
and thus \eqref{mt3} implies
\begin{eqnarray}\label{thanks}
(1+C_\star\,\eta)\,I_s(G,G^c  \Om)&\ge&(1-C_\star\,\eta)\,I_s^\e(G,\Om^c)-4\,I_s(G,B_{\varrho}(p)^c)
\\
&\ge&(1-C_\star\,\eta)\,I_s^\e(G,\Om^c)-C(n,s)\frac{|G|}{r_\star^s}\,.
\end{eqnarray}
Now
\[
I_s(G,G^c  \Om)=I_s^\e(G,G^c  \Om)+\int_G\,dz\int_{B_\e(z)\cap G^c\cap\Om}\frac{dy}{|z-y|^{n+s}}\le
I_s^\e(G,G^c  \Om)+C(n,s)\frac{|G|}{\e^s}
\]
and so we deduce \eqref{geometrica bordo} from \eqref{thanks} and $r_\star\le \e/C(n,s)$.
\end{proof}

We are now ready for the proof of Theorem \ref{thm density estimate full}.

\begin{proof}[Proof of Theorem~\ref{thm density estimate full}]
  Let $E$ be a $(\Lambda,r_0,\s,K_\s^\e)$-minimizer in $(A,\Om)$, so that
    \begin{eqnarray}
    \label{Lambda r0 minimizer proof}
  &&I^\e_s(E A,E^c \Om)+I^\e_s(E A^c,E^c\Om A)+
\s\,I^\e_s(E A,\Om^c)
  \\\nonumber
  &\le&
  I^\e_s(F A,F^c \Om)+
I^\e_s(F A^c,F^c \Om  A)+\s\,I^\e_s(F A,\Om^c)+\Lambda\,|E\Delta F|\,,
  \end{eqnarray}
  whenever $F\subset\Om$ with $\diam(F\Delta E)<2\,r_0$ and $F\cap A^c=E\cap A^c$.

 Let us fix $B_r(x)\subset A$ with $r<r_0$ and test \eqref{Lambda r0 minimizer proof} with $F=E\cap B_r(x)^c$. Since $F\cap A=E\cap B_r(x)^c\cap A$ and $F^c=E^c\cup(E\cap B_r(x))$ one has
 $$  I^\e_s(F  A,F^c \Om)-I^\e_s(E  A,E^c \Om)
=-I^\e_s(E  B_r(x),E^c \Om)
+I^\e_s(E  B_r(x)^c  A,E  B_r(x))\,.$$
  Similarly, by $F\cap A^c=E\cap A^c$,
  \begin{eqnarray*}
    I^\e_s(F  A^c,F^c \Om  A)-I^\e_s(E  A^c,E^c \Om  A)&=&I^\e_s(E  A^c,E  B_r(x))\,,
    \\
    I^\e_s(F  A,\Om^c)-I^\e_s(E  A,\Om^c)&=&-I^\e_s(E  B_r(x),\Om^c)\,,
  \end{eqnarray*}
  so that \eqref{Lambda r0 minimizer proof} gives
  \begin{equation}\label{density 1}\begin{split}
&  I^\e_s(E  B_r(x),E^c \Om)+\s\,I^\e_s(E  B_r(x),\Om^c)\\
&\qquad\le I^\e_s(E  B_r(x)^c  A,E  B_r(x))
+I^\e_s(E  A^c,E  B_r(x))+\Lambda\,u(r)\,,\end{split}\end{equation}
  provided $u(r)=|E\cap B_r(x)|$. By $A^c\subset B_r(x)^c$ one finds
  \[
  I^\e_s(E  B_r(x)^c  A,E  B_r(x))+I^\e_s(E  A^c,E  B_r(x))\le 2\,I^\e_s(E  B_r(x),E  B_r(x)^c)\,,
  \]
  so that, by adding $I^\e_s(E  B_r(x),E  B_r(x)^c)$ to both sides of \eqref{density 1}, one gets
  \begin{eqnarray}\label{density 2}
  &&I^\e_s(E  B_r(x),(E^c \Om)\cup(E  B_r(x)^c))+\s\,I^\e_s(E  B_r(x),\Om^c)
  \\\nonumber
  &\le&
  3\,I^\e_s(E  B_r(x),E  B_r(x)^c)+\Lambda\,u(r)\,.
  \end{eqnarray}
  Now let $C_\star$ and $\eta_0$ be as in Lemma \ref{lemma bordo} and fix $\eta_1\in(0,\eta_0)$ depending on $n$, $s$,  and $\s$ so that
  \[
  (1-C_\star\,\eta_1)^2-|\s|\ge \eta_1\,.
  \]
  We are going to apply Lemma \ref{lemma bordo} with $\eta=\eta_1$, so that \eqref{rstar} gives
  \[
  r_\star=\min\Big\{\frac{\varrho_A(\eta_1,\Om)}{4\,C_\star},\frac{\e}{2\,C_\star}\Big\}\le c_*\,\min\big\{\varrho_A(\eta_1,\Om),\e\big\}
  \]
  for a constant $c_*$ depending on $n$ and $s$. If we set $G=E\cap B_r(x)$, then $G\subset\Om\cap B_{r_\star}(x)$ provided $r<r_\star$. In particular, by \eqref{geometrica bordo} we find
  \begin{equation}
    \label{ho applicatio geometrica bordo}
        I^\e_s(G,G^c \Om)\ge(1-C_\star\,\eta_1)\,I^\e_s(G,\Om^c)-\frac{C_\star}{r_\star^s}\,|G|\,.
  \end{equation}
  Moreover,
  \[
  G^c\cap\Om=(E^c\cap\Om)\cup(B_r(x)^c\cap\Om)=(E^c\cap\Om)\cup(E\cap B_r(x)^c)\,,
  \]
  so that \eqref{density 2} gives
  $$ 3\,I^\e_s(E  B_r(x),E  B_r(x)^c)+\Lambda\,u(r)\ge
  I^\e_s(G,G^c \Om)+\s\,I^\e_s(G,\Om^c).$$
  By \eqref{geometrica bordo},
  \begin{eqnarray*}
   &&3\,I^\e_s(E  B_r(x),E  B_r(x)^c)+\Lambda\,u(r)
   \\
   &\ge&
   C_\star\,\eta_1\,\,I^\e_s(G,G^c \Om)+(1-C_\star\,\eta_1)I^\e_s(G,G^c \Om)-|\s|\,I^\e_s(G,\Om^c)
   \\
   &\ge&
   C_\star\,\eta_1\,I^\e_s(G,G^c \Om)+[(1-C_\star\,\eta_1)^2-|\s|]\,I^\e_s(G,\Om^c)-(1-C_\star\,\eta_1)\,\frac{C_\star}{r_\star^s}\,u(r)
   \\
   &\ge&
   \eta_1\,\Big(I^\e_s(G,G^c \Om)+I^\e_s(G,\Om^c)\Big)-\frac{C_\star}{r_\star^s}\,u(r)=\eta_1\,P_s^\e(G)-\frac{C_\star}{r_\star^s}\,u(r)\,.
  \end{eqnarray*}
  Summarizing, if $B_r(x)\subset A$ with $r<\min\{r_0,r_\star\}$, then
  \begin{equation}
    \label{density key}
    3\,I^\e_s(E  B_r(x),E  B_r(x)^c)+\Big(\Lambda+\frac{C_\star}{r_\star^s}\Big)\,u(r)\ge \eta_1\,P_s^\e(G)\,,\qquad G=E\cap B_r(x)\,.
  \end{equation}
  Since $I^\e_s(E  B_r(x),E  B_r(x)^c)\le P_s^\e(B_r(x))\le C(n,s)r^{n-s}$ and $u(r) \le\om_n r^n$, we see that \eqref{density key} immediately implies \eqref{upper perimeter estimate}. Next, we apply the fractional isoperimetric inequality \eqref{fractional isoperimetric inequality} to bound from below $P_s^\e(G)$ in \eqref{density key}.  More precisely, we notice that
  \[
  P_s(G)=P_s^\e(G)+\int_G\,dz\int_{G^c\cap B_\e(z)^c}\frac{dy}{|z-y|^{n+s}}\le P_s^\e(G)+C(n,s)\,\frac{|G|}{\e^s}
  \le P_s^\e(G)+C(n,s)\,\frac{|G|}{r_\star^s}
  \]
  so that, up to increasing the value of $C_\star$, \eqref{fractional isoperimetric inequality} gives
  \begin{equation}
    \label{density 3}
      3\,I^\e_s(E  B_r(x),E  B_r(x)^c)+\Big(\Lambda+\frac{C_\star}{r_\star^s}\Big)\,u(r)\ge \frac{P(B_1)}{\om_n^{(n-s)/n}}\,\eta_1\,u(r)^{(n-s)/n}\,.
  \end{equation}
  By exploiting $u(r)\le (\om_n\,r^n)^{s/n} u(r)^{(n-s)/n}$ we find that if
  \begin{equation}
    \label{careful}
      \Big(\Lambda+\frac{C_\star}{r_\star^s}\Big)(\om_n\,r^n)^{s/n}\le \frac{P(B_1)\,\eta_1}{2\,\om_n^{(n-s)/n}}\,,
  \end{equation}
  then \eqref{density 3} implies
  \begin{eqnarray}\label{luis}
    u(r)^{(n-s)/n} \le C(n,s,\Lambda,\s)\,I^\e_s(E  B_r(x),E  B_r(x)^c)\,.
  \end{eqnarray}
  We notice that \eqref{careful} is equivalent to
  \[
  \Big(\frac{r}{r_\star}\Big)^s\le \frac{P(B_1)\,\eta_1}{2\om_n(r_\star^s\Lambda+C_\star)}
  \]
  which, by $r_\star^s\Lambda\le\Lambda$, is in turn implied by
  \[
  r\le c(n,s)\,r_\star
  \]
  and thus by $r\le c_*(n,s)\min\{{{\varrho}}_A(\eta_1,\Om),\e\}$. We have thus proved the validity of \eqref{luis} provided $r\le r_0$ and $r\le c_*(n,s)\min\{{{\varrho}}_A(\eta_1,\Om),\e\}$. Arguing as in \cite[Lemma 4.2]{caffaroquesavin}, we conclude that if $B_r(x)\subset A$, $x\in\Om\cap\pa E$, and $r$ satisfies the above constraints, then $u(r)\ge c_0\,r^n$ for some $c_0=c_0(n,s,\s,\Lambda)$. By Remark \ref{remark complement}, $\Om\cap E^c$ is a $(\Lambda,r_0,-\s)$-minimizer in $(A,\Om)$, and since $\Om\cap\pa(\Om\cap E^c)=\Om\cap\pa E$ one can repeat the above argument with $\Om\cap E^c$ in place of $E$ to find the upper volume density estimate in \eqref{volume density estimates}.
  \end{proof}

\appendix

\section{Closure theorem for almost-minimizers and blow-up limits}\label{appendix blowup limits}

In this appendix we prove a closure theorem for sequences of $(\Lambda,r_0,\s,K)$-minimizers (Theorem \ref{DLVP}). As an application, we then show that blow-up limits exists and are in turn minimizers (Theorem \ref{BL:COMP:TH}). In the following, given an interaction kernel $K\in\K(n,s\l)$, we set
\begin{equation}\label{DEF WOM}
w_F(x):=
1_F(x)\,\int_{F^c}K(x-y)\,dy\qquad F\subset\R^n
\end{equation}
so that
\begin{equation}\label{BEL}
{\mbox{$w_F$ belongs to~$L^1(A)$ if (and only if) $I(AF,F^c)<+\infty$}}.
\end{equation}

\begin{theorem}\label{DLVP}
Let $n\ge2$, $s\in(0,1)$, $\s\in(-1,1)$, $\l\ge 1$, $\Lambda\ge0$, $r_0>0$, $K\in\K(n,s,\l)$ and $A$ be an open set. Consider a sequence $\{E_j\}_{j\in\N}$ of $(\Lambda,r_0,\s,K)$-minimizers in $(A,\Om_j)$, where $\{\Om_j\}_{j\in\N}$ is a sequence of open sets. If there exists an open set $\Om$ with $P(\Om)<\infty$ such that
\begin{equation}
  \label{EjOmj convergono}
  \mbox{$E_j\to E$ and $\Om_j\to\Om$ in $L^1_{{\rm loc}}(A)$}
\end{equation}
and
\begin{equation}\label{DLVPw}
{\mbox{$w_{\Om_j}$ converges to $w_\Om$ weakly in $L^1_{\loc}(A)$}}
\end{equation}
then~$E$ is a $(\Lambda,r_0,\s,K)$-minimizer in~$(A,\Omega)$.

Moreover, in the case when $K=K_s^\e$ and $\Om_j=\Om$ is an open set with $C^1$-boundary such that $\varrho(\eta,\Om)>0$ for every $\eta>0$, one has that:

\medskip

\noindent {\it (i)} if $x_j\in A\cap\ov{\Om\cap\pa E_j}$ and $x_j\to x$ for some $x\in A$, then $x\in\pa E$;

\medskip

\noindent {\it (ii)} if $x\in\ov{\Om\cap\pa E}$ then there exists $x_j\in\pa E_j$ such that $x_j\to x$.
\end{theorem}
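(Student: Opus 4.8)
The plan is to prove the minimality of $E$ by a competitor--surgery argument together with a lower/upper semicontinuity analysis, and then to deduce $(i)$ from the uniform density estimates of Theorem~\ref{thm density estimate full} and $(ii)$ from a connectedness argument for the robust boundary.

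\smallskip\noindent\emph{Minimality of $E$.} Fix a competitor $F\subset\Om$ with $\diam(F\Delta E)<2r_0$ and $F\cap A^c=E\cap A^c$; we may assume the right-hand side of \eqref{Lambda r0 minimizer} associated to $F$ and $\Om$ is finite. For each $j$ set
\[
F_j:=\big((F\setminus E)\cap\Om_j\big)\cup\big(E_j\setminus(E\setminus F)\big)\,.
\]
Since $F\Delta E\subset A$, one checks directly that $F_j\subset\Om_j$, that $F_j\cap A^c=E_j\cap A^c$, that $E_j\Delta F_j\subset F\Delta E$ (hence $\diam(E_j\Delta F_j)<2r_0$ and $E_j=F_j$ outside a fixed bounded set $W\supset F\Delta E$), and that $F_j\to F$ in $L^1_{\loc}(A)$ because $F\subset\Om$ and $E_j\to E$, $\Om_j\to\Om$ in $L^1_{\loc}(A)$. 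Plugging $F_j$ as a competitor into the $(\Lambda,r_0,\s,K)$-minimality of $E_j$ in $(A,\Om_j)$ (Definition~\ref{DFAL}), and using $|E_j\Delta F_j|\to|E\Delta F|$, the assertion reduces to: (a) the left-hand side of \eqref{Lambda r0 minimizer} for $E$ and $\Om$ is $\le\liminf_j$ of the one for $E_j$ and $\Om_j$; and (b) the remaining (non-$\Lambda$) right-hand terms for $F_j$ and $\Om_j$ converge to those for $F$ and $\Om$.

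\smallskip\noindent\emph{Passage to the limit.} In (a), the two terms built only from the nonnegative kernel $K$, namely $I(E_jA,E_j^c\Om_j)$ and $I(E_jA^c,E_j^c\Om_jA)$, are lower semicontinuous by Fatou's lemma, using that (up to a subsequence) $1_{E_j}\to1_E$ and $1_{\Om_j}\to1_\Om$ a.e.\ in $A$. For the container terms, write $I(GA,\Om_j^c)=\int_A 1_G\,w_{\Om_j}$ for $G\in\{E_j,F_j\}$ (recall $G\subset\Om_j$ and \eqref{DEF WOM}); since $1_{E_j},1_{F_j}$ converge strongly in $L^1_{\loc}(A)$ and are uniformly bounded, while $w_{\Om_j}\rightharpoonup w_\Om$ in $L^1_{\loc}(A)$ by \eqref{DLVPw}, a Dunford--Pettis equiintegrability argument gives $\s\,I(E_jA,\Om_j^c)\to\s\,I(EA,\Om^c)$ and $\s\,I(F_jA,\Om_j^c)\to\s\,I(FA,\Om^c)$, \emph{for either sign of} $\s$ --- which is precisely the role of hypothesis \eqref{DLVPw}. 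It remains to treat the competitor side of the two kernel terms, $I(F_jA,F_j^c\Om_j)$ and $I(F_jA^c,F_j^c\Om_jA)$. Here we use crucially that $E_j=F_j$ off the fixed set $W$: decomposing each interaction according to whether its two arguments fall in $W$ or $W^c$, the ``both in $W^c$'' pieces coincide with the corresponding $E_j$ pieces and cancel in the difference with the left-hand side; every ``one in $W$, one in $W^c$'' piece involves $K$ restricted to a set on which it is integrable (of the type $I_s(W,W^c)=P_s(W)<\infty$ for a ball $W$) and so converges by dominated convergence; and the ``both in $W$'' pieces, the only genuinely singular ones, are handled using interior density/perimeter bounds for the $E_j$ obtained by testing their minimality against $E_j$ emptied or filled on small balls compactly contained in $A\cap\Om_j$ (which yields $I(E_jB_\rho(x),E_j^cB_\rho(x))\le C\,\rho^{n-s}$), together with the $L^1_{\loc}$-convergence of the sets. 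This last bookkeeping is the main technical point; it plays the role of the semicontinuity estimates of the local theory (cf.\ \eqref{minimality halfspace local}).

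\smallskip\noindent\emph{Proof of $(i)$ and $(ii)$.} Assume now $K=K_s^\e$ and $\Om_j=\Om$ as in the statement, so that \eqref{DLVPw} holds trivially. For $(i)$, let $x_j\in A\cap\ov{\Om\cap\pa E_j}$ with $x_j\to x\in A$, and fix $r$ with $B_{2r}(x)\subset A$ and $r<\min\{r_0,c_*\varrho_A(\eta_1,\Om),c_*\e\}$; then $B_r(x_j)\subset A$ for large $j$, and applying \eqref{volume density estimates} to $E_j$ and, via Remark~\ref{remark complement}, to $\Om\cap E_j^c$ (for which $\Om\cap\pa(\Om\cap E_j^c)=\Om\cap\pa E_j\ni$ the relevant points) yields $|E_j\cap B_r(x_j)|$ pinned between $c\,r^n$ and $(\omega_n-c)\,r^n$ with $c>0$ independent of $j$. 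Letting $j\to\infty$ and using $|E_j\cap B_r(x_j)|\to|E\cap B_r(x)|$, then letting $r$ run over the admissible range together with the monotonicity of $\rho\mapsto|E\cap B_\rho(x)|$, we get $0<|E\cap B_\rho(x)|<\omega_n\rho^n$ for every $\rho>0$; since $x\in\ov\Om$, this gives $x\in\pa E$. For $(ii)$, suppose towards a contradiction that for some $\de>0$ and a subsequence $B_\de(x)\cap\pa E_j=\varnothing$ for all $j$, with $B_\de(x)\subset A$. Emptiness of the robust boundary in the connected ball $B_\de(x)$ forces $|E_j\cap B_\de(x)|\in\{0,\omega_n\de^n\}$, and passing to a further subsequence this value is the same for all $j$; letting $j\to\infty$ gives $|E\cap B_\de(x)|\in\{0,\omega_n\de^n\}$. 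But $x\in\ov{\Om\cap\pa E}$ provides $z\in\Om\cap\pa E$ with $|z-x|<\de/2$, and then $0<|E\cap B_\rho(z)|$ and $|B_\rho(z)\setminus E|>0$ for $\rho<\de/2$ with $B_\rho(z)\subset B_\de(x)$ --- a contradiction. This proves $(ii)$, which uses only the $L^1_{\loc}(A)$-convergence of the $E_j$, not the density estimates.
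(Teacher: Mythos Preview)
Your treatment of parts $(i)$ and $(ii)$ is essentially the paper's argument and is fine. The gap is in the passage to the limit on the \emph{competitor side}, specifically your handling of the ``both in $W$'' pieces.

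You propose to control these singular terms via a priori perimeter bounds on $E_j$ obtained by testing against $E_j$ emptied/filled on balls $B_\rho(x)\subset\subset A\cap\Omega_j$. This does not close the argument, for two reasons. First, such bounds are only \emph{interior} (you say so yourself), while $W$ in general meets $\partial\Omega_j$; the up--to--the--boundary estimates of Theorem~\ref{thm density estimate full} are only available for $K=K_s^\varepsilon$ and for domains with the uniform $C^1$ property, not in the generality of Theorem~\ref{DLVP} (arbitrary $K\in\K(n,s,\lambda)$ and arbitrary $\Omega_j$). Second, even where bounds are available they give at best lower semicontinuity (via Fatou), not the $\limsup$ control you need on the right--hand side. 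With your competitor $F_j$, the set $F_j\cap W$ still depends on $E_j$ on $W\setminus(E\Delta F)$, so after the $W^c$--$W^c$ cancellation there remain $E_j$--dependent interactions inside $W$ whose convergence is not explained.

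The paper avoids this entirely. It first observes that finiteness of the right--hand side for the fixed competitor $F$ forces $w_F,w_{F^c}\in L^1(A)$ (this is the step you are missing), and it takes $W$ to be a finite union of balls so that $w_W,w_{W^c}\in L^1(\R^n)$. It then uses the competitor $F_j=(F\cap\Omega_j\cap W)\cup(E_j\cap W^c)$, which equals $F\cap\Omega_j$ \emph{throughout} $W$ and hence decouples from $E_j$ there. The ``both in $W$'' competitor term is then simply $I(F\Omega_jW,F^c\Omega_jW)$, and its convergence to $I(FW,F^c\Omega W)$ follows by estimating the difference through $\int_{(\Omega_j\Delta\Omega)\cap W}(w_F+w_{F^c})$, which tends to zero by the $L^1$--integrability just established and $\Omega_j\to\Omega$ in $L^1_{\rm loc}(A)$. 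The mixed $W$--$W^c$ terms are handled using $w_W,w_{W^c}\in L^1(\R^n)$, and the $\sigma$--terms via the weak $L^1$ convergence \eqref{DLVPw} exactly as you say. No a priori perimeter estimate on $E_j$ is needed anywhere in the closure argument.
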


\begin{proof} {\it Step one}:
We want to prove that \eqref{Lambda r0 minimizer section} holds whenever $F\subset\Om$, $\diam(F\Delta E)<2r_0$
and $F\cap A^c=E\cap A^c$. Of course, without loss of generality, we may assume that
\[
I(F A,F^c \Om)+I(F A^c,F^c \Om  A)+\s\,I(F A,\Om^c)<\infty\,.
\]
Since $I(F A,\Om^c)\le I(\Om A,\Om^c)<\infty$, this implies
\begin{equation}
  \label{inparticular}
  I(F A,F^c \Om)+I(F A^c,F^c \Om  A)<\infty
\end{equation}
and hence
\[
\int_A\,w_F=I(AF,F^c)=I(AF,F^c\Om)+I(AF,\Om^c)\le I(AF,F^c\Om)+P(\Om)<\infty\,.
\]
Similarly,
\begin{eqnarray*}
\int_Aw_{F^c}&=&I(AF^c,F)=I(AF^c,AF)+I(AF^c,A^cF)
\\
&\le&
I(AF^c,AF)+I(AF^c\Om^c,A^cF)+I(AF^c\Om,A^cF)
\end{eqnarray*}
where $I(AF^c,AF)\le\int_Aw_F<\infty$, $I(AF^c\Om^c,A^cF)\le P(\Om)<\infty$, and $I(AF^c\Om,A^cF)<\infty$ by \eqref{inparticular}. We have thus proved that in showing \eqref{Lambda r0 minimizer section} for a given $F\subset\Om$ with $\diam(F\Delta E)<2r_0$
and $F\cap A^c=E\cap A^c$, we can directly assume that
\begin{equation}
  \label{wFwFcL1}
  w_F\,,w_{F^c}\in L^1(A)\,.
\end{equation}
Now we fix a bounded set $W$ with $w_W,w_{W^c}\in L^1(\R^n)$ such that
$F\Delta E\cc W\cc A$ and $\diam(W)<2r_0$ (we can achieve this by taking $W$ in the form of a finite union of balls, say).
 Our goal is thus proving that
\begin{eqnarray}\label{fine}
&&I(E  W,E^c \Om  W)
+I(E  W,E^c \Om  W^c)
+I(E  W^c,E^c \Om  W)
+\s\,I(E  W,\Om^c)
\\  &\le&\nonumber
  I(F  W,F^c \Om  W)+
I(F  W,E^c \Om  W^c)
+I(E  W^c,F^c \Om  W)
+\s\,I(F  W,\Om^c)
+\Lambda\,|E\Delta F|\,.
\end{eqnarray}
To this end we set $F_j=(F\cap\Om_j\cap W)\cup(E_j\cap W^c)$,
and test the minimality inequality of $E_j$ (see~\eqref{Lambda r0 minimizer}) on $F_j$. In this way we find
  \begin{eqnarray}
    \label{Lambda r0 minimizer j}
  &&I(E_j  W,E_j^c \Om_j  W)
+I(E_j  W,E_j^c \Om_j  W^c)
+I(E_j  W^c,E_j^c \Om_j  W)
+\s\,I(E_j  W,\Om_j^c)
  \\ \nonumber
  &\le&
  I(F \Om_j  W,F^c \Om_j  W)+
I(F \Om_j  W,E_j^c \Om_j  W^c)
+I(E_j  W^c,F^c \Om_j  W)
+\s\,I(F \Om_j  W,\Om_j^c)
\\\nonumber
&&+\Lambda\,|E_j\Delta F_j|.
  \end{eqnarray}
We claim that in the limit $j\to\infty$, \eqref{Lambda r0 minimizer j} implies \eqref{fine}.
By Fatou's lemma and \eqref{EjOmj convergono}, the inferior limit as $j\to\infty$ of the sum of first three terms on the left-hand side of \eqref{Lambda r0 minimizer j} is bounded from below by the corresponding sum on the left-hand side of \eqref{fine}. We thus have to address the behavior of the two $\s$-terms in \eqref{Lambda r0 minimizer j}, and of the first three terms appearing on its right-hand side.
%\begin{equation}\label{FMS:1}
%\begin{split}
%&\liminf_{j\to+\infty} I(E_j  W,E_j^c \Om_j  W)\ge
%I(E W,E^c \Om  W)\\&\qquad=
%I(E  W,E^c \Om  W)
%\\&\liminf_{j\to+\infty}
%I(E_j  W,E_j^c \Om_j  W^c)\ge
%I(E  W,E^c \Om  W^c)\\ &\qquad=
%I(E  W,E^c \Om  W^c)
%\\{\mbox{and }}\quad&\liminf_{j\to+\infty}
%I(E_j  W^c,E_j^c \Om_j  W)
%\ge
%I(E  W^c,E^c \Om_j  W)
%\\ &\qquad=I(E  W^c,E^c \Om_j  W).\end{split}
%\end{equation}
We start with the first of these terms, and find by~\eqref{kernelclass 0} and~\eqref{DEF WOM} that
\begin{eqnarray*}
|I(F \Om_j  W,F^c \Om_j  W)
-I(F \Om  W,F^c \Om  W)|&\le&
\big|I(F \Om_j  W,F^c \Om_j  W)
-I(F \Om_j  W,F^c \Om  W)\big|
\\&&+
\big|I(F \Om_j  W,F^c \Om  W)
-I(F \Om  W,F^c \Om  W)\big|
\\
&\le&\int_{(\Om_j\Delta \Om)\cap W}w_{F^c}+\int_{(\Om_j\Delta \Om)\cap W}w_{F}\,.
%\\
%\\ &\le& C\left[
%\int_{F^c\cap (\Om\Delta\Om_j)\cap  W}
%\int_{ F\cap\Om_j\cap W }
%\frac{dx\,dy}{|x-y|^{n+s}} +
%\int_{F\cap(\Om\Delta\Om_j)\cap W}
%\int_{ F^c\cap\Om\cap W }
%\frac{dx\,dy}{|x-y|^{n+s}}
%\right]\\&\le&
%C\left[ \int_{(\Om\Delta\Om_j)\cap W} w_{F^c}(x)\,dx
%+\int_{(\Om\Delta\Om_j)\cap W} w_{F}(x)\,dx
%\right].
\end{eqnarray*}
By \eqref{EjOmj convergono} and \eqref{wFwFcL1} we thus find
\begin{equation}\label{FMS:2}
\lim_{j\to+\infty}
I(F \Om_j  W,F^c \Om_j  W)=
I(F \Om  W,F^c \Om  W)\,.
\end{equation}
Now we claim that if~$G_j\subset\Om_j$ and~$G_j\to G$ in $L^1_{{\rm loc}}(A)$, then
\begin{equation}\label{8:AUS}
\lim_{j\to+\infty} I(G_j  W,\Om_j^c)=
I(G  W,\Om^c).
\end{equation}
To this end, we recall that \eqref{DLVPw} implies
\[
\lim_{j\to\infty}\int_{U_j}w_{\Om_j}=0\qquad\mbox{whenever}\qquad\lim_{j\to\infty}|U_j|=0\,,
\]
see \cite[Theorem 1.38]{AFP}. Since, by definition, \eqref{DLVPw} gives us $\int_{\R^n}u\,w_{\Om_j}\to\int_{\R^n}u\,w_\Om$ for every $u\in L^\infty_{{\loc}}(A)$, we conclude that
\begin{equation*}
\begin{split}
& |I(G_jW,\Om_j^c)-I(G_jW,\Om^c)|=
\left| \int_{\R^n} 1_{G_j\cap W}(x)\,\big( w_{\Om_j}(x)-
w_{\Om}(x)\big) \,dx
\right|\\
&\qquad\le
\left| \int_{\R^n} 1_{GW}(x)\,\big( w_{\Om_j}(x)-
w_{\Om}(x)\big) \,dx
\right|+
\int_{(G_j\Delta G) \cap W}\big(w_{\Om_j}(x)+
w_{\Om}(x)\big) \,dx
\;\longrightarrow 0
\end{split}
\end{equation*}
as~$j\to+\infty$. {F}rom this last fact and thanks to $w_\Om\in L^1(\R^n)$ we have
\begin{eqnarray*}
&& |I(G_jW,\Om_j^c)-I(GW,\Om^c)|\\
&\le& |I(G_jW,\Om_j^c)-I(G_jW,\Om^c)|+|I(G_jW,\Om^c)-I(GW,\Om^c)|
\\ &\le& |I(G_jW,\Om_j^c)-I(G_jW,\Om^c)|+\int_{(G_j\Delta G)W} w_\Om(x)\,dx\;\longrightarrow 0
\end{eqnarray*}
as~$j\to+\infty$, which proves~\eqref{8:AUS}. We now exploit \eqref{8:AUS}
with $G_j=E_j$ and with~$G_j=F\cap\Om_j$ to take care of the $\s$-terms in
\eqref{Lambda r0 minimizer j} and find
\begin{equation}\label{FMS:3}
\lim_{j\to+\infty} I(E_j  W,\Om_j^c)=I(E  W,\Om^c)
\qquad
\lim_{j\to+\infty} I(F\Om_j  W,\Om_j^c)=I(F\Om  W,\Om^c)\,.
\end{equation}
We are left to take care of the second and third terms on the right-hand side of \eqref{Lambda r0 minimizer j}.
To this end we notice that since $w_{W},w_{W^c}\in L^1(\R^n)$,
if~$G_j$, $L_j\subset\Om_j$ with $G_j\to G$ and $L_j\to L$ in $L^1_{{\rm loc}}(A)$, then
\begin{eqnarray*}
&& |I(G_jW,L_jW^c)-I(GW,LW^c)|\\
&\le& |I(G_jW,L_jW^c)-I(G_jW,LW^c)| +|I(G_jW,LW^c)-I(GW,LW^c)|\\
&\le& \int_{L_j\Delta L} w_{W^c}(x)\,dx
+ \int_{G_j\Delta G} w_{W}(x)\,dx
\;\longrightarrow 0
\end{eqnarray*}
as~$j\to+\infty$. By using this observation first
with~$G_j:=F\cap\Om_j$ and~$L_j:=E_j^c\Om_j$, and then with~$G_j:=F^c\Om_j$ and~$L_j:=E_j$,
we finally obtain that
\begin{equation}\label{0d6567}\begin{split}
&\lim_{j\to+\infty}
I(F \Om_j  W,E_j^c \Om_j  W^c)=
I(F \Om  W,E^c \Om  W^c)
\\
&\lim_{j\to+\infty}
I(E_j \Om_j  W^c,F^c \Om_j  W)
= I(E \Om_j  W^c,F^c \Om  W).\end{split}\end{equation}
We have thus completed the proof of \eqref{fine}.

\medskip

\noindent {\it Step two}: Let us now assume that $K=K_s^\e$ and that $\Om_j=\Om$ for an open set $\Om$ with $C^1$-boundary such that $\varrho(\eta,\Om)>0$ for every $\eta>0$, so that the density estimates of Theorem \ref{thm density estimate full} hold. Let us pick $x_j\in A\cap\ov{\Om\cap\pa E_j}$ and $x_j\to x$ for some $x\in A$, then $x\in \pa E$. By \eqref{volume density estimates}
\[
\frac1{C_0}\le \frac{|E_j\cap B_r(x_j)|}{r^n}\le 1-\frac1{C_0}\,,
\]
for every $r<\min\{\dist(x_j,\pa A),r_0,\,c_*\varrho(\eta_1,\Om),\,c_*\,\e\}$ where $C_0=C_0(n,s,\s,\Lambda)$ and $c_*=c_*(n,s)$. As $j\to\infty$ we find
\[
\frac1{C_0}\le \frac{|E\cap B_r(x)|}{r^n}\le 1-\frac1{C_0}\,,
\]
for every $r<\min\{\dist(x,\pa A),r_0,\,c_*\varrho(\eta_1,\Om),\,c_*\,\e\}$, that is $x\in\pa E$. Now let us consider $x\in\ov{\Om\cap\pa E}$, and assume that for some $\tau>0$ and for infinitely many values of $j$ we have $B_\tau(x)\cap\pa E_j=\varnothing$. Without loss of generality we may assume that either $|E_j\cap B_\tau(x)|=0$ or $|E_j^c\cap B_\tau(x)|=0$ for infinitely many $j$. In this way, by Fatou's lemma,
\[
0=\lim_{j\to\infty}I^\e_s(E_jB_\tau(x),E_j^cB_\tau(x))\ge I_s^\e(E B_\tau(x),E^cB_\tau(x))
\]
so that either $|E\cap B_\tau(x)|=0$ or $|E^c\cap B_\tau(x)|=0$, against the fact that $x\in\ov{\Om\cap\pa E}$ and thus the density estimates \eqref{volume density estimates} hold for $E$ at $x$, being $E$ a $(\Lambda,r_0,\s,K^\e_s)$-minimizer.
\end{proof}

As an application of Theorem \ref{DLVP} we obtain the following compactness result for blow-ups. For the sake of simplicity, we limit our analysis to the case $K=K_s$.

\begin{theorem}\label{BL:COMP:TH}
Let $n\ge2$, $s\in(0,1)$, $\s\in(-1,1)$, $\Lambda\ge0$, $r_0>0$, and let $A$ be an open set.
Let $E$ be a $(\Lambda,r_0,\s,K_s)$-minimizer in $(A,\Om)$ where $\Om$ is an open set with $C^1$-boundary in $A$ and with $\varrho_A(\eta,\Om)>0$ for every $\eta>0$, let $x_0\in A\cap\ov{\Om\cap\pa E}$, and
given a positive vanishing sequence $\{r_j\}_{j\in\N}$, set
\[
E_j=E^{x_0,r_j}=\frac{E-x_0}{r_j}\qquad \Om_j=\Om^{x_0,r_j}\,.
\]
Then there exists an half-space $H$ with $0\in\pa H$ and a set $E\subset H$ with $0\in\pa E$ such that, up to extracting a subsequence,
$E_j\to E$ and~$\Om_j\to\Om$ in $L^1_{\loc}(\R^n)$ as $j\to\infty$ and~$E$
is a $(0,\infty,\s,K_s)$-minimizer in~$(\R^n,H)$.
% namely
%\begin{equation}\label{LAK:MI}
%\begin{split}
%&I(E  A,E^c \Om)+I(E  A^c,E^c \Om  A)+
%\s\,I(E  A,\Om^c)
%\\  &\qquad\le
%  I(F  A,F^c \Om)+
%I(F  A^c,F^c \Om  A)+\s\,I(F  A,\Om^c)
%\end{split}  \end{equation}
%for every $F\subset\Om$ with
%$\diam(F\Delta E)<2\,r_0$ and $F\cap A^c=E\cap A^c$.
\end{theorem}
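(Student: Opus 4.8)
The plan is to combine three ingredients: the scaling invariance of the minimality condition from Remark \ref{remark blowups}, the uniform-up-to-the-boundary density estimates of Theorem \ref{thm density estimate full}, and the closure result Theorem \ref{DLVP}. Throughout I denote the blow-up limit of $E$ by $E_*$ (the letter $E$ is reused for it in the statement).

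\emph{Step one (rescaling).} Since $K_s$ is $-(n+s)$-homogeneous, $r^{n+s}K_s(r\zeta)=K_s(\zeta)$, so Remark \ref{remark blowups} gives that each $E_j=E^{x_0,r_j}$ is a $(\Lambda_j,r_0/r_j,\s,K_s)$-minimizer in $(A_j,\Om_j)$ with $\Lambda_j=r_j^s\,\Lambda\to0$, $A_j=A^{x_0,r_j}$, $\Om_j=\Om^{x_0,r_j}$. As $x_0\in A$ and $A$ is open we have $B_{\de/r_j}(0)\subset A_j$ for some $\de>0$, so $A_j$ eventually contains every fixed ball and $r_0/r_j\to\infty$; by the locality of the minimality inequality (the same cancellation of common interaction terms used in the computation preceding Definition \ref{DFAL}), for every fixed $R$ and all $j$ large $E_j$ is a $(\Lambda_j,\infty,\s,K_s)$-minimizer in $(B_R,\Om_j)$. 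Finally, by \eqref{scaling varrho}, $\varrho_{A_j}(\eta,\Om_j)=\varrho_A(\eta,\Om)/r_j\to\infty$ for every $\eta>0$, so the hypotheses of Theorem \ref{thm density estimate full} hold for each $E_j$ with constants $C_0,c_*,\eta_1$ independent of $j$, and the corresponding thresholds $c_*\varrho_{A_j}(\eta_1,\Om_j)$ diverge.

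\emph{Step two (limit of the domains).} Since $x_0\in\overline{\Om\cap\pa E}\subset\overline\Om$ and $\pa\Om$ is $C^1$ in $A$, either $x_0\in\Om$, in which case $\Om_j\to\R^n$, or $x_0\in\pa\Om$, in which case $\Om_j\to H$ in $L^1_{\loc}(\R^n)$ with $H=\{z:z\cdot\nu_\Om(x_0)<0\}$ the tangent half-space, so $0\in\pa H$; write $H$ for the limit in either case. The delicate point is \eqref{DLVPw}, i.e.\ $w_{\Om_j}\rightharpoonup w_H$ weakly in $L^1_{\loc}(\R^n)$, with $w$ as in \eqref{DEF WOM}: a.e.\ pointwise convergence $w_{\Om_j}\to w_H$ follows from $\Om_j\to H$ and dominated convergence in the integral defining $w$, while uniform integrability of $\{w_{\Om_j}\}$ on each ball is a consequence of the uniform $C^1$-structure encoded in $\varrho_A(\eta,\Om)>0$, which yields $w_{\Om_j}(z)\le C\,\dist(z,\pa\Om_j)^{-s}$ with $\dist(\cdot,\pa\Om_j)^{-s}$ uniformly integrable on bounded sets because $s<1$; Vitali's theorem then gives the weak $L^1_{\loc}$-convergence. (In the interior case $w_{\Om_j}\to0$ locally and the point is vacuous.)

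\emph{Step three (compactness and closure).} Applying the upper perimeter estimate \eqref{upper perimeter estimate} of Theorem \ref{thm density estimate full} (valid also for $K=K_s$, the case $\e=\infty$, with the same proof) to each $E_j$ gives $I_s(E_j\cap B_r(x),(E_j\cap B_r(x))^c)\le C_0\,r^{n-s}$ for $B_r(x)\subset B_R$ and $r$ below a threshold tending to $\infty$ with $j$; hence $\sup_j P_s(E_j;B_R)<\infty$ for every $R$, and by precompactness of sets with locally uniformly bounded fractional perimeter we may extract a subsequence with $E_j\to E_*$ in $L^1_{\loc}(\R^n)$. From $E_j\subset\Om_j$ and $\Om_j\to H$ we get $E_*\subset H$; and since $0=(x_0-x_0)/r_j\in\overline{\Om_j\cap\pa E_j}$, the two-sided volume density estimate \eqref{volume density estimates} holds for $E_j$ at $0$ with uniform constants and passes to the limit, giving $0\in\pa E_*$. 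To conclude that $E_*$ is a $(0,\infty,\s,K_s)$-minimizer in $(\R^n,H)$ one tests with an arbitrary competitor $F\subset H$ with $F\Delta E_*\subset\subset B_R$ (the only relevant class since $r_0=\infty$) and invokes Theorem \ref{DLVP} on $B_R$: $E_j$ is a $(\Lambda_j,\infty,\s,K_s)$-minimizer in $(B_R,\Om_j)$, $E_j\to E_*$ and $\Om_j\to H$ in $L^1_{\loc}(B_R)$, and $w_{\Om_j}\rightharpoonup w_H$ in $L^1_{\loc}(B_R)$ by Step two; letting $R\to\infty$ yields the minimality of $E_*$ in $(\R^n,H)$.

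\emph{Main obstacle.} The principal subtlety is that the limit domain $H$ is unbounded and has infinite nonlocal perimeter, so Theorem \ref{DLVP} cannot be quoted verbatim (its finiteness hypothesis $P(\Om)<\infty$ fails for a half-space); the fix is to run the closure argument entirely inside a large ball $B_R$ — which is legitimate because the minimality of $E_*$ is tested only against competitors $F$ with $F\Delta E_*$ bounded, and because the proof of Theorem \ref{DLVP} only uses $W\subset\subset A$ with $W$ bounded — after harmlessly replacing $H$ by $H\cap B_{R'}$, $R'>R$, which does not alter any interaction term entering the minimality inequality localized in $B_R$. The second technical point is the weak $L^1_{\loc}$-convergence \eqref{DLVPw} of the potentials $w_{\Om_j}$, which is exactly where the uniform $C^1$-hypothesis $\varrho_A(\eta,\Om)>0$ is genuinely exploited.
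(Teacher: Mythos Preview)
Your three–ingredient strategy (scaling via Remark \ref{remark blowups}, compactness from the uniform density estimates of Theorem \ref{thm density estimate full}, closure via Theorem \ref{DLVP}) is exactly the paper's. The one substantive methodological difference is in Step two: to obtain $w_{\Om_j}\rightharpoonup w_H$ in $L^1_{\loc}$ the paper truncates the domains to $\Om_j\cap B_{2R}$ and uses the bi-Lipschitz comparison Lemma \ref{LEM WOM} (producing a two–sided pointwise bound $w_{\Om_j\cap B_{2R}}\approx w_{H\cap B_{2R}}$), whereas you argue via the pointwise bound $w_{\Om_j}(z)\le C\,\dist(z,\pa\Om_j)^{-s}$ together with Vitali. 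Your route is valid and arguably more elementary; the paper's route is more self-contained because Lemma \ref{LEM WOM} is already available.

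There is, however, one genuine inaccuracy in your ``Main obstacle'' paragraph. The assertion that replacing $H$ by $H\cap B_{R'}$ ``does not alter any interaction term entering the minimality inequality localized in $B_R$'' is false for a nonlocal problem: the adhesion term $\s\,I(FB_R,\Om^c)$ becomes $\s\,I(FB_R,\Om^c)+\s\,I(FB_R,\Om\cap B_{R'}^c)$, which is a real change. The correct point (which the paper also states tersely) is that for a competitor $F$ with $F\Delta E_j\subset B_R$ the \emph{difference} between the truncated and untruncated minimality inequalities is controlled by terms of the form $I((F\Delta E_j)\cap B_R,\,B_{2R}^c)\le C(n,s)\,R^{-s}\,|F\Delta E_j|$, so truncation only shifts the almost–minimality constant by $C/R^s$; one then lets $R\to\infty$ after applying Theorem \ref{DLVP}. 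With this correction your argument goes through.
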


We shall need the following simple lemma.

\begin{lemma}\label{LEM WOM}
If~$f:\R^n\to\R^n$ iw a bi-Lipschitz diffeomorphism and $\Om\subset\R^n$, then
\[
w_{f(\Omega)}(x)\le C\, w_{\Omega}(f^{-1}(x))\qquad\forall x\in\R^n\,,
\]
with a constant~$C$ depending only on the Lipschitz constants of~$f$ and $f^{-1}$, and converging to $1$ when these Lipschitz constants converge to $1$.
\end{lemma}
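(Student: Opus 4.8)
The plan is simply a change of variables in the integral defining $w_{f(\Omega)}$, combined with the bi-Lipschitz bounds on $f$; here the relevant kernel is $K=K_s$ (the case needed in Theorem \ref{BL:COMP:TH}), so that $K_s(\zeta)=|\zeta|^{-n-s}$. Since $f$ is a bijection of $\R^n$ one has $f(\Omega)^c=f(\Omega^c)$ and $1_{f(\Omega)}(x)=1_\Omega(f^{-1}(x))$ for every $x$; writing $u=f^{-1}(x)$, this gives
\[
w_{f(\Omega)}(x)=1_\Omega(u)\int_{f(\Omega^c)}K_s(x-y)\,dy .
\]

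I would then substitute $y=f(z)$. By the area formula for the bi-Lipschitz map $f$, $dy=|Jf(z)|\,dz$ with $|Jf(z)|\le\Lip(f)^n$ for a.e.\ $z$, while $x-y=f(u)-f(z)$. The Lipschitz bound on $f^{-1}$, used in the form $|u-z|=|f^{-1}(f(u))-f^{-1}(f(z))|\le\Lip(f^{-1})\,|f(u)-f(z)|$, yields
\[
K_s(x-y)=|f(u)-f(z)|^{-n-s}\le\Lip(f^{-1})^{n+s}\,|u-z|^{-n-s}=\Lip(f^{-1})^{n+s}\,K_s(u-z).
\]
Combining these facts,
\[
w_{f(\Omega)}(x)\le\Lip(f^{-1})^{n+s}\Lip(f)^n\,1_\Omega(u)\int_{\Omega^c}K_s(u-z)\,dz=C\,w_\Omega(f^{-1}(x)),
\]
with $C=\Lip(f^{-1})^{n+s}\Lip(f)^n$, which converges to $1$ as $\Lip(f),\Lip(f^{-1})\to1$.

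There is no real obstacle here: the only points worth a line of justification are the identity $f(\Omega)^c=f(\Omega^c)$ (bijectivity of $f$), the validity of the substitution (area formula for the bi-Lipschitz diffeomorphism $f$, together with $|Jf|\le\Lip(f)^n$ a.e.), and the pointwise comparison $K_s(f(u)-f(z))\le\Lip(f^{-1})^{n+s}K_s(u-z)$. For a kernel $K$ comparable to $K_s$ from both sides the same computation goes through up to extra factors governed by the constants in \eqref{kernelclass 0}, but only the stated form for $K_s$ is used in the sequel.
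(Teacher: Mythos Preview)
Your proof is correct and follows essentially the same route as the paper's own argument: both perform the change of variables $y=f(z)$ via the area formula, then bound the Jacobian by (a power of) $\Lip(f)$ and the distance $|f(u)-f(z)|$ from below using $\Lip(f^{-1})$. Your explicit constant $C=\Lip(f^{-1})^{n+s}\Lip(f)^n$ is a convenient sharpening of what the paper records.
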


\begin{proof}[Proof of Lemma \ref{LEM WOM}] Setting~$\tilde y=f^{-1}(y)$ and $\tilde x=f^{-1}(x)$ the area formula gives
$$ w_{f(\Omega)}(x)
= 1_{f(\Om)}(x)\,\int_{f(\Om^c)}\frac{dy}{|x-y|^{n+s}}
= 1_{\Om}(\tilde x)\,\int_{\Om^c}\frac{ Jf (\tilde y)\,d\tilde
y}{|
f(\tilde x)-f(\tilde y)|^{n+s}}\,.$$
We conclude as $\|Jf\|_{L^\infty(\R^n)}\le 1+C(n)\,|\Lip(f)-1|$ and~$|f(\tilde x)-f(\tilde y)|\ge \Lip(f^{-1})\,|\tilde x-\tilde y|$.
\end{proof}

\begin{proof}[Proof of Theorem \ref{BL:COMP:TH}]
  By regularity of $\pa\Om$ we have that $\Om_j\to H$ in $L^1_{{\rm loc}}(\R^n)$, while if we set $A_j=(A-x_0)/r_j$, then $A_j\to\R^n$ in $L^1_{{\rm loc}}(\R^n)$ as $x_0\in A$. By Remark \ref{remark blowups}, $E_j$ is a $(\Lambda\,r_j^s,r_0/r_j,\s,K_s)$-minimizer in $(A_j,\Om_j)$. Let us fix $\tau>0$ and $R>0$ and notice that for $j$ large enough we certainly have that $E_j\cap B_{2R}$ is a $(\tau,\tau^{-1},\s,K_s)$-minimizer in $(B_R,\Om_j\cap B_{2R})$. Let us also notice that by \eqref{remark stable blowup} we can certainly assume that
  \begin{eqnarray*}
  \varrho_{B_R}(\eta,\Om_j\cap B_{2R})&=&\varrho_{(B_{Rr_j}(x_0))^{x_0,r_j}}\Big(\eta,(\Om\cap B_{2\,R\,r_j}(x_0))^{x_0,r_j}\Big)
    \\
    &=&\frac1{r_j}\,\varrho_{B_{Rr_j}(x_0)}\Big(\eta,\Om\cap B_{2\,R\,r_j}(x_0)\Big)\ge\frac{\varrho_A(\Om,\eta)}{r_j}
  \end{eqnarray*}
  In particular, for $\eta_1=\eta_1(n,s,\s)$ as in Theorem \ref{thm density estimate full} we have
  \[
  \theta:=\inf_{j\in\N}\varrho_{B_R}(\eta,\Om_j\cap B_{2R})>0\,.
  \]
  Now we show that, up to extracting a subsequence, $E_j\to E$ in $L^1_{{\rm loc}}(B_{2R})$ for some set $E\subset H$. Indeed, by \eqref{upper perimeter estimate} we have
  \[
   I_s(E_j B_r(x),(E_j B_r(x))^c)\le C_0\,r^{n-s}\,,
  \]
  whenever $B_r(x)\subset B_R$ and $r<\min\{\tau^{-1},c_*\,\theta\}$ where $C_0=C_0(n,s,\s,\Lambda)$, $c_*=c_*(n,s)$ and $\theta>0$ is as above. By a covering argument we see that
  \[
  \sup_{j\in\N}I_s(E_jW,E_j^c)<\infty
  \]
  for every $W\cc B_{2R}$. In particular, $E_j\to E$ in $L^1_{{\rm loc}}(B_{2R})$ for some set $E$, and the fact that $E\subset H$ follows immediately from $E_j\subset\Om_j$ and $\Om_j\to H$ in $L^1_{{\rm loc}}(\R^n)$.

  We claim that $w_{\Om_j\cap B_{2R}}$ converges weakly in $L^1(B_R)$ to $w_{H\cap B_{2R}}$. Indeed, there exists a bi-Lipschitz family of diffeomorphisms $f_j:\R^n\to\R^n$ such that $f_j(0)=0$, $f_j(\Om_j\cap B_{2R})=H\cap B_{2R}$ and
  \[
  (1-\de_j)\,|x-y|\le |f_j(x)-f_j(y)|\le (1+\de_j)\, |x-y|\qquad\forall x,y\in\R^n\,
  \]
  where $\de_j\to0$. In particular, by Lemma \ref{LEM WOM},
  \[
  (1-C(n)\,\de_j)\,w_{H\cap B_{2R}}\le
  w_{\Om_j\cap B_{2R}}\le (1+C(n)\,\de_j)\,w_{H\cap B_{2R}}\qquad\mbox{on $\R^n$}\,.
  \]
  This proves our claim.

  Since $P_s(H\cap B_{2R})<\infty$ we can apply Theorem \ref{DLVP} to conclude that, for every $\tau>0$, $E$ is $(\tau,\tau^{-1},\s,K_s)$-minimizer in $(B_R,H\cap B_{2R})$. By the arbitrariness of $\tau$, $E$ is $(0,\infty,\s,K_s)$-minimizer in $(B_R,H\cap B_{2R})$. By the arbitrariness of $R$,
  $E$ is $(0,\infty,\s,K_s)$-minimizer in $(\R^n,H)$. Again by Theorem \ref{DLVP}, since $0\in\ov{\Om_j\cap\pa E_j}$ for every $j$, it follows that $0\in\pa E$.
\end{proof}

\bibliography{references}
\bibliographystyle{is-alpha}

\end{document}